\theoremstyle{plain} 
\newtheorem{theorem}{\indent\sc Theorem}[section]
\newtheorem{lemma}[theorem]{\indent\sc Lemma}
\newtheorem{proposition}[theorem]{\indent\sc Proposition}
\theoremstyle{definition} 
\newtheorem{definition}[theorem]{\indent\sc Definition}
\newtheorem{remark}[theorem]{\indent\sc Remark}
\newtheorem{example}[theorem]{\indent\sc Example}
\newtheorem{notation}[theorem]{\indent\sc Notation}
\DeclareMathOperator{\QQ}{\mathbb{Q}}
\DeclareMathOperator{\ZZ}{\mathbb{Z}}
\DeclareMathOperator{\CC}{\mathbb{C}}
\DeclareMathOperator{\OO}{\mathcal{O}}
\DeclareMathOperator{\gal}{Gal}
\DeclareMathOperator{\ab}{\mathrm{ab}}
\DeclareMathOperator{\tors}{tors}
\DeclareMathOperator{\coker}{Coker}
\DeclareMathOperator{\Ker}{Ker}
\DeclareMathOperator{\order}{order}
\DeclareMathOperator{\Z}{Z}
\DeclareMathOperator{\D}{D}
\begin{document}

\title[TORSION OF ELLIPTIC CURVES WITH RATIONAL \(j\)-INVARIANT]{Torsion of elliptic curves with rational\\ \(j\)-invariant over the maximal elementary abelian 2-extension of \(\QQ\)} 

\author[Lucas Hamada]{Lucas Hamada} 

\subjclass[2020]{ 
Primary 11G05; Secondary 12F05.
}
%
\keywords{ 
Elliptic curves, torsion points, number fields.
}
\address{
Department of Mathematics \endgraf
Institute of Science Tokyo \endgraf
2-12-1, O-okayama, Meguro-ku, Tokyo 152-8551 \endgraf
Japan
}
\email{lucas.h.r.hamada@gmail.com}

\maketitle

\begin{abstract}
 In this paper, we classify the possible torsion subgroup structures of elliptic curves defined over the compositum of all quadratic extensions of the rational number field, whose \(j\)-invariant is a rational number not equal to 0 or 1728.
\end{abstract}

\section{Introduction}\label{introduction} 
    Let \(E\) be an elliptic curve defined over \(\QQ\). B. Mazur proved in \cite{Mazur_modcur} that the torsion subgroup \(E(\QQ)_{\tors}\) is isomorphic to one of the following 15 groups:
    \begin{align*}
            \mathbb{Z}&\slash N_1 \mathbb{Z}, &\hspace{1cm}  &  N_1 = 1,\; \ldots, 10, 12,\\
            \mathbb{Z}\slash 2\mathbb{Z} &\oplus \mathbb{Z} \slash 2N_2 \mathbb{Z},   
            & \hspace{1cm} & N_2 = 1,\; \ldots, 4.
        \end{align*}
    Since then, there have been proven analogues for several other classes of elliptic curves. Let \(\Phi(d)\) denote the set of all isomorphism classes of torsion subgroups \(E(K)_{\tors}\), where \(K\) runs through all extensions of \(\QQ\) of degree \(d\) and \(E\) runs through all elliptic curves defined over \(K\). S. Kamienny in \cite{kamienny_1}, and M. Kenku and F. Momose in \cite{momose_kenku} described \(\Phi(2)\). The classification of \(\Phi(3)\) was completed by D. Jeon, C. H. Kim, and A. Schweizer in \cite{Jeon2004OnTT}, and  M. Derickx, A. Etropolski, J. Morrow, M. van Hoeij, and D. Zureick-Brown in \cite{10.2140/ant.2021.15.1837}.
    
   Similarly, let \(\Phi_{\QQ}(d)\) denote the set of all isomorphism classes of torsion subgroups \(E(K)_{\tors}\), where \(E\) runs through all elliptic curves defined over \(\QQ\) and \(K\) runs through all extensions of \(\QQ\) of degree \(d\). F. Najman determined \(\Phi_{\QQ}(2)\) and \(\Phi_{\QQ}(3)\) in \cite{najman_2016}. E. Gonz\'{a}lez-Jim\'{e}nez determined \(\Phi_{\QQ}(5)\) in \cite{GonzalezJimenez2016CompleteCO} and, jointly with F. Najman in \cite{GonzalezJimenez2016GrowthOT},  classified \(\Phi_{\QQ}(p)\) for all primes \(p \geq 7\). The same duo in \cite{GonzalezJimenez2016GrowthOT}, along with M. Chou in \cite{CHOU}, also determined \(\Phi_{\QQ}(4)\).

    Analogues of Mazur's theorem for infinite extensions of \(\QQ\) have also been established. Let \(\QQ(2^{\infty})\) denote the compositum of all quadratic extensions of \(\QQ\) (Assuming all fields contained in \(\CC\)). M. Laska and M. Lorenz in \cite{Laska_Lorenz}, and Y. Fujita in \cite{Fujita_1,Fujita_2}, classified, up to isomorphism, the possible torsion subgroups \(E(\QQ(2^{\infty}))_{\tors}\) for elliptic curves \(E\) defined over \(\QQ\):

    \begin{theorem}[M. Laska and M. Lorenz \cite{Laska_Lorenz} and Y. Fujita \cite{Fujita_1,Fujita_2}]\label{fujita_list}
    Let \(E\) be an elliptic curve defined over \(\QQ\). Then, the torsion subgroup \(E(\QQ(2^{\infty}))_{\tors}\) is isomorphic to one of the following groups\(:\)
    \begin{align*}
    \ZZ&/N_1\ZZ, &\hspace{1cm}  & N_1 = 1,3,5,7,9,15,\\
     \ZZ/2\ZZ &\oplus \ZZ/N_2\ZZ,     & \hspace{1cm}   & N_2 = 1,\; \ldots, 6, 8,\\
     \ZZ/3\ZZ &\oplus \ZZ/3\ZZ,\\
    \ZZ/4\ZZ &\oplus \ZZ/4N_4\ZZ, &\hspace{1cm}  & N_4 = 1,2,3,4,\\
    \ZZ/2N_5\ZZ &\oplus \ZZ/2N_5\ZZ, &\hspace{1cm}  & N_5 = 3,4.
        \end{align*}
    \end{theorem}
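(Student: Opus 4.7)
The plan is to separate the odd and $2$-primary parts of $E(\QQ(2^\infty))_{\tors}$, reduce the odd part to Mazur's theorem via the quadratic twist trick, and analyse the $2$-primary part through the mod-$2^k$ Galois representation. Every finite subfield $K \subseteq \QQ(2^\infty)$ is multi-quadratic, so $\gal(K/\QQ) \cong (\ZZ/2\ZZ)^r$; hence for any odd prime $p$, the natural action of $\gal(K/\QQ)$ on a cyclic subgroup of $p$-power order lands in the $2$-torsion of $(\ZZ/p^a\ZZ)^\times = \{\pm 1\}$. Given a point $P \in E(K)$ of order $p^a$, the sign character $\gal(K/\QQ) \to \{\pm 1\}$ has kernel of index at most $2$ fixing $P$, so either $P \in E(\QQ)$, or $P \in E(F)$ with $F = \QQ(\sqrt{d})$ quadratic and $\gal(F/\QQ)$ acting by $-1$. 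Via the $\overline{\QQ}$-isomorphism $E \cong E^d$, such a $P$ corresponds to a $\QQ$-rational point of the quadratic twist $E^d$. Summing over odd primes and squarefree integers $d$ (including $d=1$), the odd part of $E(\QQ(2^\infty))_{\tors}$ is identified with the odd torsion of the twists $E^d(\QQ)$, all embedded in the common $E(\overline{\QQ})$.

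By Mazur's theorem each $E^d(\QQ)_{\tors}$ has odd part cyclic of order in $\{1,3,5,7,9\}$. The next step is to control how these combine: a point of order $p$ on two distinct twists $E^{d_1}, E^{d_2}$ yields two distinct $\QQ$-rational cyclic subgroups of order $p$ on $E$, hence two independent rational $p$-isogenies from $E$. The Mazur--Kenku classification of rational cyclic isogenies then restricts the combined odd structure to $\ZZ/N_1\ZZ$ for $N_1 \in \{1,3,5,7,9,15\}$ or $\ZZ/3\ZZ \oplus \ZZ/3\ZZ$. For the $2$-primary part, a point $P$ of order $2^k$ lies in $\QQ(2^\infty)$ precisely when the image of $\gal(\overline{\QQ}/\QQ)$ in $\aut(E[2^k])$, modulo the stabiliser of $P$, is $2$-elementary abelian. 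I would proceed stepwise in $k$: the base case $\gal(\QQ(E[2])/\QQ) \hookrightarrow S_3$ gives $E[2] \subseteq E(\QQ(2^\infty))$ iff this image is a $2$-group (otherwise $E(\QQ(2^\infty))[2] = 0$), and then I would analyse the splitting of successive $2^k$-division polynomials over multi-quadratic extensions, constrained by $E$ being defined over $\QQ$ (in particular by the Weil pairing relating $E[2^k]$ to $\mu_{2^k}$).

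The main obstacle will be precisely this $2$-adic enumeration: many configurations of the mod-$2^k$ image arise for elliptic curves over $\QQ$, and excluding each ostensibly plausible larger group (say $\ZZ/16\ZZ$ or $\ZZ/4\ZZ\oplus\ZZ/20\ZZ$) requires a careful case analysis of the possible $2$-adic Galois images and their maximal elementary abelian quotients acting on torsion; this is the technical heart of the Laska--Lorenz and Fujita proofs. Once both parts are pinned down, I would finish by verifying that each surviving combination of odd and $2$-primary parts can coexist on a single $E/\QQ$ (ruling out entanglements forced by a common $j$-invariant) and by exhibiting explicit elliptic curves, drawn for instance from Cremona's tables, which realise each item in the statement.
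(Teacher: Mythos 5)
This theorem is not proved in the paper at all: it is imported verbatim from Laska--Lorenz and Fujita, so your proposal can only be compared with those original arguments and with the tools the paper quotes (Theorem \ref{twist_torsion}, Lemma \ref{cn+cn_contained}). Your treatment of the odd part is essentially the historically correct route: for odd \(n\) one has \(E(K(\sqrt{d}))[n]\cong E(K)[n]\oplus E^{d}(K)[n]\), and iterating over the finite multiquadratic subfields of \(\QQ(2^{\infty})\) identifies the odd torsion with the direct sum of the odd torsion of the rational quadratic twists \(E^{d}(\QQ)\); combined with the Weil pairing (which forces \(p=3\) whenever two distinct twists contribute \(p\)-torsion, since \(\QQ(\zeta_p)\subseteq\QQ(2^{\infty})\) requires \(p-1\mid 2\)), this part is sound. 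A small imprecision: a point of odd prime-power order in \(E(K)\) need not be an eigenvector for \(\gal(K/\QQ)\), so "the sign character fixes \(P\) up to index \(2\)" should be replaced by the eigenspace decomposition of \(E(K)[p^{a}]\) under the elementary abelian \(2\)-group, or simply by the quoted twist decomposition.

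There are, however, two genuine gaps. First, the combination step for the odd part does not follow from "Mazur plus the Mazur--Kenku isogeny list" as you assert. A curve over \(\QQ\) can carry a rational \(3\)-isogeny and an independent rational \(7\)-isogeny (non-cuspidal rational points of \(X_{0}(21)\) exist, e.g.\ the isogeny class 162b), and likewise \(27\)- and \(25\)-isogenies occur, so the isogeny classification alone cannot rule out \(\ZZ/21\ZZ\), \(\ZZ/3\ZZ\oplus\ZZ/9\ZZ\), and similar combinations; excluding them requires examining the finitely many relevant \(j\)-invariants (or the corresponding fibre products of modular curves) and checking that no pair of quadratic twists simultaneously realizes the needed rational torsion --- exactly the case analysis carried out by Laska--Lorenz. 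Second, and more seriously, the entire \(2\)-primary classification, together with its compatibility with the odd part, is deferred: you state the plan (mod-\(2^{k}\) images, division polynomials over multiquadratic fields, Weil pairing) and acknowledge it as "the technical heart", but none of the actual exclusions (no points of order \(32\), no \(\ZZ/2\ZZ\oplus\ZZ/32\ZZ\) or \(\ZZ/8\ZZ\oplus\ZZ/16\ZZ\), the precise bound \(\ZZ/4\ZZ\oplus\ZZ/16\ZZ\) and \(\ZZ/8\ZZ\oplus\ZZ/8\ZZ\) as maxima, and the admissible mixed groups) is established, nor is the stabilization of \(2\)-power torsion over the infinite compositum quantified. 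This is the bulk of Fujita's two papers, so as it stands the proposal is a correct outline of the known strategy rather than a proof of the statement.
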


    A natural next step in the study of elliptic curves is to classify their torsion structures, focusing on classes closely related to elliptic curves defined over \(\QQ\). This paper focuses on elliptic curves with rational \(j\)-invariant, which are quadratic twists of elliptic curves defined over \(\QQ\). This approach was first explored by T. Gu\v{z}i\'{c} in \cite{guzvic}, where he classified the torsion subgroups of elliptic curves \(E\) with \(j(E) \in \QQ\), defined over extensions \(K/\QQ\) of prime degree. Similarly, J.E. Cremona and F. Najman in \cite{Cremona_Najman} classified, up to isomorphism, the torsion subgroups of elliptic curves \(E\) with \(j(E) \in \QQ\), defined over extensions \(K/\QQ\) of degree coprime to primes less than 11.

    The main result of this paper is an analogue of Theorem \ref{fujita_list} for elliptic curves with rational \(j\)-invariants. Specifically, we prove the following theorem:

    \begin{theorem}[Main Theorem]\label{main Theorem}
    Let \(E\) be an elliptic curve defined over \(\QQ(2^{\infty})\) satisfying \(j(E) \in \QQ\setminus\{0,1728\}\). Then, the torsion subgroup \(E(\QQ(2^{\infty}))_{\tors}\) is isomorphic to one of the following groups\(:\)
   \begin{align*}
     \ZZ&/N_1\ZZ, &\hspace{1cm}  & N_1 = 1,3,5,7,9,13,15,\\
     \ZZ/2\ZZ &\oplus \ZZ/2N_2\ZZ,     & \hspace{1cm}   & N_2 = 1,\; \ldots, 6, 8,\\
      \ZZ/3\ZZ &\oplus \ZZ/3\ZZ,\\
    \ZZ/4\ZZ &\oplus \ZZ/4N_4\ZZ, &\hspace{1cm}  & N_4 = 1,2,3,4,\\
    \ZZ/2N_5\ZZ &\oplus \ZZ/2N_5\ZZ, &\hspace{1cm}  & N_5 = 3,4,6,\\
    \ZZ/6\ZZ &\oplus \ZZ/12\ZZ.
        \end{align*}
    \end{theorem}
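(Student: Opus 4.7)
The plan is to exploit the structural observation that any $E/\QQ(2^{\infty})$ with $j(E) \in \QQ\setminus\{0,1728\}$ is a quadratic twist $E_0^{(d)}$ of some $E_0/\QQ$ by an element $d \in \QQ(2^{\infty})^*/(\QQ(2^{\infty})^*)^2$, since $\aut_{\overline{\QQ}}(E) = \{\pm 1\}$ when $j \notin \{0,1728\}$. Set $M = \QQ(2^{\infty})$ and $L = M(\sqrt{d})$. When $\sqrt{d} \in M$, the groups $E(M)_{\tors}$ and $E_0(M)_{\tors}$ coincide and Theorem \ref{fujita_list} applies directly; when $L$ is a proper quadratic extension of $M$, the standard twist identity $E_0(M) \oplus E_0^{(d)}(M) \hookrightarrow E_0(L)$, with cokernel killed by $2$, identifies $E(M)_{\tors}$ with the $(-1)$-eigenspace of the nontrivial $\tau \in \gal(L/M)$ acting on $E_0(L)_{\tors}$. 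This reduces everything to analyzing torsion of $\QQ$-curves over such $L$.

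My first step would be to pin down the $2$-primary part. For any Weierstrass model $E_0 : y^2 = f(x)$ with $f \in \QQ[x]$, rationality over $M$ of a $2$-torsion point of $E_0^{(d)}$ is equivalent to rationality over $M$ of the corresponding root of $f$, since the twist only rescales the $x$-coordinate by $d \in M^{*}$. Because $\gal(M/\QQ)$ is an elementary abelian $2$-group, the splitting field of an irreducible cubic (with Galois group $\mathfrak{S}_3$ or $\ZZ/3\ZZ$) cannot embed in $M$, while any cubic with a rational root splits completely inside $M$ (its quadratic factor splits in $\QQ(\sqrt{D})\subset M$). This forces the dichotomy $E[2](M) \in \{0,(\ZZ/2\ZZ)^{2}\}$, which in turn explains why every cyclic group in the conclusion has odd order and every non-cyclic group contains full $2$-torsion.

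For the bounds at odd primes, I would enumerate, using Mazur's classification of rational $N$-isogenies, the possible $\QQ$-isogeny classes of $E_0$ admitting a nontrivial $p$-isogeny, and apply Theorem \ref{fujita_list} to each member. A nontrivial twist character $\chi_d : \gal(\overline{\QQ}/M) \to \{\pm 1\}$ can promote additional $p$-torsion of $E_0$ to $M$-rationality on the twist only when $\chi_d$ occurs as a constituent of the mod-$p$ Galois representation on $E_0[p^\infty]$ restricted to $\gal(\overline{\QQ}/M)$. The main obstacle is the systematic case analysis that rules out spurious cyclic torsion (such as $\ZZ/11\ZZ$ or $\ZZ/17\ZZ$) while producing the genuinely new groups: $\ZZ/13\ZZ$ from $\QQ$-curves with a rational $13$-isogeny whose kernel character restricted to $\gal(\overline{\QQ}/M)$ has order at most $2$; and $\ZZ/6\ZZ\oplus\ZZ/12\ZZ$ and $\ZZ/12\ZZ\oplus\ZZ/12\ZZ$ from the maximally torsion-rich Fujita classes realizing $\ZZ/6\ZZ\oplus\ZZ/6\ZZ$ and $\ZZ/4\ZZ\oplus\ZZ/12\ZZ$, via twists rationalizing one further factor.

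Finally, sharpness requires exhibiting an explicit $E/M$ with $j(E)\in\QQ\setminus\{0,1728\}$ realizing each group in the conclusion. For groups already in Theorem \ref{fujita_list}, one takes $E/\QQ$ directly; for the new groups I would construct twists $E_0^{(d)}$ of specific $\QQ$-curves---a curve with a rational $13$-isogeny for $\ZZ/13\ZZ$, and representatives of the appropriate Fujita-maximal classes for $\ZZ/6\ZZ\oplus\ZZ/12\ZZ$ and $\ZZ/12\ZZ\oplus\ZZ/12\ZZ$---paired with a carefully chosen $d \in M^{*}$ making the target torsion $M$-rational. The hardest part of the whole argument will be the uniform verification of the upper bound across all exceptional isogeny classes, as each requires a separate analysis of the image of Galois on $E_0[p^{\infty}]|_{\gal(\overline{\QQ}/M)}$ and of the characters available for twisting.
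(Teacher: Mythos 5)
Your starting point coincides with the paper's: write \(E\) as a quadratic twist \(E_0^{(d)}\) of a curve over \(\QQ\), set \(L=\QQ(2^{\infty})(\sqrt{d})\), and transfer torsion through the twist. But as a proof the proposal has genuine gaps. The central one is the passage from ``\(E(\QQ(2^{\infty}))\) has a point of order \(p\)'' to ``\(E_0\) admits a rational \(p\)-isogeny, so Mazur's isogeny theorem applies.'' Your twist character \(\chi_d\) lives on \(\gal(\overline{\QQ}/\QQ(2^{\infty}))\), and the cyclic subgroup \(\left<\phi(P_p)\right>\subset E_0(L)\) is a priori stable only under \(\gal(\overline{\QQ}/L)\); to get a \emph{\(\QQ\)-rational} isogeny you must show it is stable under all of \(\gal(\overline{\QQ}/\QQ)\). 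The paper has to work for this: it first proves that \(L/\QQ\) is Galois (for \(p\ge 11\) by a degree count on the Galois closure \(\widehat L\) combined with the Weil pairing, and for \(p=5\) and the \(2\)-power cases via the group-theoretic Lemmas \ref{group theory lemma 1}--\ref{group lemma galois theory}), and only then extracts a Galois-invariant cyclic subgroup via Lemma \ref{cyclic_isogeny_when_cn+cnm}. Nothing in your outline handles the possibility that \(L/\QQ\) is not Galois, and without that the enumeration ``by rational isogeny class'' does not get off the ground. A second substantive gap is the \(2\)-primary bound: the dichotomy \(E(\QQ(2^{\infty}))[2]\in\{0,(\ZZ/2\ZZ)^{2}\}\) is correct (and is Lemma \ref{lemma: full 2-torsion}), but it says nothing about points of order \(8\), \(16\) or \(32\). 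The paper needs Knapp's halving criterion (Theorem \ref{propknapp}), the containment \(L\subseteq\QQ(\D_4^{\infty})\) when there is a point of order \(4\), Daniels' and Chou's classifications, and a separate argument excluding order \(32\); your sketch omits all of this. (Also, \(\ZZ/3\ZZ\oplus\ZZ/3\ZZ\) is in the target list, so the claim that every non-cyclic group in the conclusion contains full \(2\)-torsion is not literally true.)

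Two further points. You give no mechanism for the compatibility step across primes: the theorem's list excludes, e.g., points of order \(24\) and \(30\), and ruling these out is not a formal consequence of the \(p\)-primary bounds --- the paper devotes Proposition \ref{no point of order 24} to it, using Daniels' isogeny/\(j\)-invariant data and the abelian/\(\D_4\) containments of \(L\). Finally, your last paragraph overreaches: the theorem is an upper-bound statement, and realizing \(\ZZ/6\ZZ\oplus\ZZ/12\ZZ\) and \(\ZZ/12\ZZ\oplus\ZZ/12\ZZ\) by explicit twists is not required --- indeed Remark \ref{remark main theorem} records that it is unknown whether these two groups occur at all, so the construction you propose there is not something you could carry out with current knowledge.
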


    The proof follows this approach. Let \(E\) be an elliptic curve defined over \(\QQ(2^\infty)\), with \(j(E) \in \QQ\setminus\{0,1728\}\). As we will see in Section \ref{lemmas on elliptic curves}, there exists an elliptic curve \(E'\) defined over \(\QQ\) and a quadratic extension \(L/\QQ(2^\infty)\) such that \(E\) is isomorphic to \(E'\), over \(L\). This relationship allows us to infer properties of the field \(L\) based on the existence of torsion points in \(E(\QQ(2^{\infty}))_{\tors}\). For instance, if \(E(\QQ(2^{\infty}))_{\tors}\) contains a point of order 5, then \(L\) is a subfield of \(\QQ^{\ab}\), the maximal abelian extension of \(\QQ\), and we can use the classification obtained by M. Chou in \cite{CHOU2} to limit the possible torsion subgroups \(E(\QQ(2^\infty))_{\tors}\).
    
      \begin{theorem}[M. Chou \cite{CHOU2}, Theorem 1.2]\label{chou_list_maximalabelian}
        Let \(E\) be an elliptic curve defined over \(\QQ\). Then, \(E(\QQ^{\ab})_{\tors}\) is isomorphic to one of the following groups\(:\)
        \begin{align*}
     &\ZZ/N_1\ZZ,     & \hspace{0.5cm}   & N_1 = 1, 3, 5, 7, 9, 11, 13, 15, 17, 19, 21, \newline 25, 27, 37, 43, 67, 163,  \\
    &\ZZ/2\ZZ \oplus \ZZ/2N_2\ZZ, &\hspace{0.5cm}  & N_2 = 1, 2, \;\ldots, 9,\\
    &\ZZ/3\ZZ \oplus \ZZ/3N_3\ZZ, &\hspace{0.5cm}  & N_3 = 1, 3,\\
    &\ZZ/4\ZZ \oplus \ZZ/4N_4\ZZ, &\hspace{0.5cm}  & N_4 = 1, 2, 3, 4,\\
    &\ZZ/5\ZZ \oplus \ZZ/5\ZZ,\\
    &\ZZ/6\ZZ \oplus \ZZ/6\ZZ,\\
    &\ZZ/8\ZZ \oplus \ZZ/8\ZZ.
        \end{align*}
    \end{theorem}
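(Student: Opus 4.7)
The plan is to exploit the identification
\[
E(\QQ^{\ab})_{\tors} = E(\overline{\QQ})_{\tors}^{[G_\QQ, G_\QQ]},
\]
where $G_\QQ = \gal(\overline{\QQ}/\QQ)$, which holds because $\QQ^{\ab}$ is the fixed field of the closed commutator subgroup. Equivalently, $E(\QQ^{\ab})[n]$ is the largest subgroup of $E[n]$ on which the mod-$n$ Galois representation $\rho_n : G_\QQ \to \GL_2(\ZZ/n\ZZ)$ acts through an abelian quotient. By primary decomposition of $E_{\tors}$ as a $G_\QQ$-module, it suffices to determine $E(\QQ^{\ab})[p^\infty]$ for each prime $p$ and then reassemble via the Chinese Remainder Theorem.

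For a cyclic subgroup $\langle P \rangle \subset E[n]$, the Galois action factors through a character $\chi : G_\QQ \to (\ZZ/n\ZZ)^\times$, which automatically descends to $G_\QQ^{\ab}$; hence $\langle P \rangle \subset E(\QQ^{\ab})$ whenever $\langle P \rangle$ is $\QQ$-rational as a subgroup scheme. Combined with Mazur's classification of $\QQ$-rational cyclic isogeny degrees, this yields the list of possible cyclic orders, and these must moreover be odd: if a nonzero 2-torsion point were in $E(\QQ^{\ab})$, then the commutator subgroup of $\rho_2(G_\QQ) \subseteq S_3$ would have to fix a nonzero vector, forcing $\rho_2(G_\QQ)$ to be a proper subgroup of $S_3$ and hence abelian, so all of $E[2]$ would lie in $E(\QQ^{\ab})$. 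For two-generator torsion $\ZZ/m\ZZ \oplus \ZZ/mk\ZZ$, the Weil pairing puts $\mu_m \subset \QQ(E[m])$, and abelianity of the full mod-$m$ action forces $\rho_m(G_\QQ)$ into a Cartan subgroup (split or non-split) of $\GL_2(\ZZ/m\ZZ)$, these being the maximal abelian subgroups.

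The main obstacle is the Cartan image analysis. For each prime $p$, I would classify those $E/\QQ$ whose mod-$p^k$ image lies in a Cartan subgroup, using rational-point data on the modular curves $X_{\mathrm{sp}}(p^k)$ and $X_{\mathrm{ns}}(p^k)$; in the relevant range, such curves are typically CM or lie in known exceptional families, and the analysis is most delicate at $p = 2$ and $p = 3$, where many small-index subgroups proliferate. After handling each prime locally, one verifies that no combined primary structure produces a group outside the stated list (in particular, that the $2$-, $3$-, and $5$-primary parts cannot simultaneously be maximal). Realizability of each entry would then be established by exhibiting explicit curves (e.g.\ from the LMFDB): the large cyclic groups $\ZZ/43\ZZ$, $\ZZ/67\ZZ$, $\ZZ/163\ZZ$ come from CM elliptic curves of class number one, while extremal two-generator groups such as $\ZZ/8\ZZ \oplus \ZZ/8\ZZ$ arise from curves with many $\QQ$-rational isogenies.
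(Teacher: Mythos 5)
First, a point of orientation: the paper does not prove this statement at all --- it is Theorem 1.2 of M. Chou \cite{CHOU2}, quoted as an external input to the proof of the main theorem. So there is no in-paper argument to compare yours against; what you have written is in effect a sketch of how Chou's theorem itself might be proved, and it has to be judged on those terms.

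As a sketch it has the right general shape (reduce to primary parts, translate ``rational over $\QQ^{\ab}$'' into triviality of the action of the commutator subgroup of the mod-$n$ image, then use isogeny classifications and modular curves), and your $p=2$ argument that $E(\QQ^{\ab})[2]$ is either trivial or all of $E[2]$ is correct. But there are genuine gaps. First, the assertion that the Cartan subgroups are \emph{the} maximal abelian subgroups of $\GL_2(\ZZ/m\ZZ)$ is false: for an odd prime $p$ the centralizer $\{aI+bN\}$ of a nonzero nilpotent matrix $N$ is an abelian subgroup of $\GL_2(\ZZ/p\ZZ)$ of order $p(p-1)$ that contains elements of order $p$ and hence lies in no Cartan. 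To push the image into a Cartan you must also invoke surjectivity of the determinant (the cyclotomic character, via the Weil pairing), and even then the Borel-type and composite-level cases need separate treatment; as written, the reduction to rational points on $X_{\mathrm{sp}}(p^k)$ and $X_{\mathrm{ns}}(p^k)$ is incomplete. Second, a point of order $n$ in $E(\QQ^{\ab})$ does not by itself give a $\QQ$-rational cyclic subgroup of order $n$, so Mazur's isogeny theorem does not directly ``yield the list of possible cyclic orders''; one must pass to the subgroup generated by the Galois orbit of the point, which may be non-cyclic, and then control the resulting mixed structures $\ZZ/a\ZZ\oplus\ZZ/n\ZZ$. Third, all of the actual content --- ruling out orders such as $32$, $49$, $81$, $121$, and above all the compatibility of the primary parts (e.g.\ why $\ZZ/8\ZZ\oplus\ZZ/24\ZZ$ cannot occur while $\ZZ/4\ZZ\oplus\ZZ/16\ZZ$ can) --- is deferred to ``I would classify'' and ``one verifies''. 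Those are precisely the hard steps, so the proposal is a reasonable plan of attack but not a proof.
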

    
    Similarly, we can prove that if \(E(\QQ(2^{\infty}))_{\tors}\) contains a point of order 4, then \(L\) is contained in \(\QQ(\D_4^{\infty})\), the compositum of all Galois extensions \(K/\QQ\) for which the Galois group \(\gal(K/\QQ)\) is isomorphic to the dihedral group \(\D_4\). In this case, we use the classification of torsion subgroup structures over \(\QQ(\D_4^{\infty})\) obtained by H. B. Daniels in \cite{Daniels}.

     \begin{theorem}[H. B. Daniels \cite{Daniels}, Theorem 1.13]\label{Daniels_list}
    Let \(E\) be an elliptic curve defined over \(\QQ\). Then, \(E(\QQ(\D_4^{\infty}))_{\tors}\) is isomorphic to one of the following groups\(:\)
        \begin{align*}
     \ZZ&/N_1\ZZ,     & \hspace{0.5cm}   & N_1 = 1,3,5,7,9,13,15,  \\
    \ZZ/3\ZZ &\oplus \ZZ/3N_3\ZZ, &\hspace{0.5cm}  & N_3 = 1,5,\\
    \ZZ/4\ZZ &\oplus \ZZ/4N_4\ZZ, &\hspace{0.5cm}  & N_4 = 1,\; \ldots, 6, 8,\\
    \ZZ/5\ZZ &\oplus \ZZ/5\ZZ,\\
    \ZZ/8\ZZ &\oplus \ZZ/8N_8\ZZ, &\hspace{0.5cm}  & N_8 = 1, 2, 3, 4,\\
    \ZZ/12\ZZ &\oplus \ZZ/12N_{12}\ZZ, &\hspace{0.5cm}  & N_{12} = 1, 2,\\
    \ZZ/16\ZZ &\oplus \ZZ/16\ZZ.
        \end{align*}
    \end{theorem}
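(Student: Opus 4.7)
The approach is to analyze torsion growth from $\QQ(2^{\infty})$, for which Theorem \ref{fujita_list} gives a complete list, to the larger field $\QQ(\D_4^{\infty})$, using the Galois representations attached to $E/\QQ$. Since $\D_4$ is a $2$-group, $\gal(\QQ(\D_4^{\infty})/\QQ)$ is pro-$2$ and contains $\QQ(2^{\infty})$, so every torsion point $P \in E(\QQ(\D_4^{\infty}))$ has $[\QQ(P):\QQ]$ a power of $2$, and odd-order torsion growth is tightly restricted.

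The first step is a group-theoretic characterization of $\QQ(\D_4^{\infty})$: a finite Galois extension $K/\QQ$ lies in $\QQ(\D_4^{\infty})$ if and only if $\gal(K/\QQ)$ embeds as a subgroup of some finite direct power $\D_4^n$, equivalently if and only if $\gal(K/\QQ)$ is a $2$-group of nilpotency class at most $2$ and exponent dividing $4$ whose commutator subgroup has exponent dividing $2$. Given this, a candidate torsion point $P \in E[N]$ lies in $E(\QQ(\D_4^{\infty}))$ iff the stabilizer of $P$ inside the image of $\bar\rho_{E,N} \colon G_\QQ \to \GL_2(\ZZ/N\ZZ)$ has quotient satisfying the above condition. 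For each odd prime $p$, using the known classification of mod-$p^k$ Galois images together with the rational-isogeny data (rational points on $X_0(N)$ and $X_1(M,N)$ for $N$ supported on odd primes), one enumerates the possible odd-primary parts: these are exactly the ones appearing in the claimed list, namely the cyclic groups of orders dividing $15$ or equal to $7, 13$, together with $\ZZ/3\ZZ \oplus \ZZ/3\ZZ$, $\ZZ/3\ZZ \oplus \ZZ/15\ZZ$, and $\ZZ/5\ZZ \oplus \ZZ/5\ZZ$.

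For the $2$-primary part, I would use the Rouse-Zureick-Brown classification of possible images of $\rho_{E,2^\infty} \colon G_\QQ \to \GL_2(\ZZ_2)$. For each such image $H$, one determines the largest subgroup $H_0 \leq H$ whose quotient $H/H_0$ satisfies the $\D_4$-variety condition from Step 1; then $E(\QQ(\D_4^{\infty}))[2^\infty] = E[2^\infty]^{H_0}$. This is a finite enumeration producing exactly the $2$-primary pairs $\ZZ/2^a\ZZ \oplus \ZZ/2^b\ZZ$ appearing in the list, culminating in $\ZZ/16\ZZ \oplus \ZZ/16\ZZ$. Combining odd-primary and $2$-primary data, and ruling out incompatible combinations via the explicit lists of non-cuspidal rational points on $X_1(M,N)$, produces the upper bound; each group on the list is then realized by an explicit example $E/\QQ$ from LMFDB, whose torsion structure over $\QQ(\D_4^{\infty})$ is verified directly using the criterion in Step 1.

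The main obstacle is the $2$-primary enumeration: verifying, across the finite Rouse-Zureick-Brown list, that no image of $\rho_{E,2^\infty}$ admits a $\D_4$-variety quotient acting trivially on $E[32]$ (ruling out larger $2$-power torsion or mixed pairs like $\ZZ/16\ZZ \oplus \ZZ/32\ZZ$), and conversely identifying the specific images that realize each of the large $2$-primary groups in the list. This requires nontrivial computations with subgroups of $\GL_2(\ZZ/16\ZZ)$ and careful tracking of fields of definition of $2$-power torsion points relative to the group-theoretic characterization from Step 1.
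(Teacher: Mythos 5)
First, note that the paper does not prove this statement: Theorem \ref{Daniels_list} is quoted verbatim from H.~B.~Daniels \cite{Daniels} (together with its corrigendum \cite{Daniels_errata}) and is used as a black box in Section \ref{section main}. There is therefore no internal proof to compare your proposal against; the relevant comparison is with Daniels' own argument, whose broad architecture (a group-theoretic description of the subfields of \(\QQ(\D_4^{\infty})\), the Rouse--Zureick-Brown classification of \(2\)-adic images, and isogeny/field-of-definition constraints on odd torsion) your sketch does correctly mirror.

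That said, your Step 1 contains a genuine gap, and it is exactly the step on which the whole enumeration rests. A finite Galois \(K/\QQ\) inside \(\QQ(\D_4^{\infty})\) sits inside a compositum \(F\) of finitely many \(\D_4\)-extensions, so \(\gal(K/\QQ)\) is a \emph{quotient of a subgroup} of \(\D_4^n\) --- not necessarily a subgroup, as you assert. More seriously, the converse direction of your biconditional is false: knowing that \(\gal(K/\QQ)\) lies in the variety generated by \(\D_4\) does not imply that the particular field \(K\) embeds into a compositum of \(\D_4\)-extensions of \(\QQ\); that is a solvability-of-embedding-problems question, not a purely group-theoretic one. (For instance, \(\ZZ/4\ZZ\) is a subgroup of \(\D_4\) but is not a quotient of any \(\D_4^n\), since the abelianization of \(\D_4^n\) is elementary abelian, so deciding which cyclic quartic fields lie in \(\QQ(\D_4^{\infty})\) requires an actual field-theoretic argument.) This is precisely the flaw in Lemma 3.3 of Daniels' original paper that necessitated the published corrigendum \cite{Daniels_errata}, whose corrected Proposition 3.4 is what the present paper invokes in the proof of Lemma \ref{c2 + c4 L galois}. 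Since your Steps 2 and 3 reduce the entire classification to testing the Step-1 criterion against mod-\(N\) images, and since those finite enumerations are in any case only described rather than carried out, the proposal as written does not constitute a proof.
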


\begin{remark}\label{remark main theorem}
    It is known that all the groups listed in Theorem \ref{main Theorem}, with the exceptions of \((\mathbb{Z}/6\mathbb{Z}) \oplus (\mathbb{Z}/12\mathbb{Z})\) and \((\mathbb{Z}/12\mathbb{Z}) \oplus (\mathbb{Z}/12\mathbb{Z})\), occur as the torsion subgroup \(E(\mathbb{Q}(2^{\infty}))_{\text{tors}}\) for some elliptic curve \(E/\mathbb{Q}(2^{\infty})\) with \(j\)-invariant \(j(E) \in \mathbb{Q} \setminus \{0, 1728\}\). In Table 1, we provide the LMFDB labels (see \cite{LMFDB}) for elliptic curves exhibiting each of the torsion structures listed in Theorem \ref{main Theorem}, with the aforementioned two exceptions. Some of these examples are taken from \cite{Daniels}, \cite{Fujita_1}, \cite{Fujita_2}, and \cite{Laska_Lorenz}.
\end{remark}

\subsection*{Outline of the paper}  

In Section \ref{elliptic curves over Q}, we present known results regarding elliptic curves defined over \(\QQ\) and establish several lemmas concerning the extension degree of the field of definition of torsion points. In Section \ref{lemmas on elliptic curves}, we review the theory of elliptic curves with rational \(j\)-invariant and prove various lemmas that relate their torsion points to those of elliptic curves defined over \(\QQ\). As a further preparation, in Section \ref{lemmas on group theory}, we prove two essential lemmas concerning group theory. Finally, in Section \ref{section main}, we establish Theorem \ref{main Theorem}, beginning with a study of the \(p\)-primary part \(E(\QQ(2^\infty))[p^{\infty}]\) for each prime \(p\), followed by an analysis of their compatibility.

    \begin{table}[h!]
\begin{center}
\renewcommand{\arraystretch}{1}
\begin{tabular}{|l|l||l|l|}\hline
	\(E/F\) & \(E(F)_{\tors}\) & \(E/F\) & \(E(F)_{\tors}\)\\
	\hline
	\href{http://www.lmfdb.org/EllipticCurve/Q/26b2}{\texttt{26b2}}     & $\{\mathcal{O}\}$ &
     \href{https://www.lmfdb.org/EllipticCurve/Q/150/c/3}{\texttt{150a3}}   & $\ZZ/2\ZZ\oplus \ZZ/10\ZZ$ \\
	\href{http://www.lmfdb.org/EllipticCurve/Q/19a2}{\texttt{19a2}}     & $\ZZ/3\ZZ$ &
        \href{https://www.lmfdb.org/EllipticCurve/Q/90/c/7}{\texttt{90c3}}     & $\ZZ/2\ZZ\oplus \ZZ/12\ZZ$ \\
     \href{https://www.lmfdb.org/EllipticCurve/Q/11/a/2}{\texttt{11a1}}     & $\ZZ/5\ZZ$ &
      \href{http://www.lmfdb.org/EllipticCurve/Q/210e1}{\texttt{210e1}}     & $\ZZ/2\ZZ\oplus \ZZ/16\ZZ$ \\
	\href{http://www.lmfdb.org/EllipticCurve/Q/26b1}{\texttt{26b1}}     & $\ZZ/7\ZZ$ &
	\href{https://www.lmfdb.org/EllipticCurve/Q/19a1/}{\texttt{19a1}}     & $\ZZ/3\ZZ\oplus \ZZ/3\ZZ$ \\
	\href{http://www.lmfdb.org/EllipticCurve/Q/54a2}{\texttt{54a2}}     & $\ZZ/9\ZZ$ &
	\href{https://www.lmfdb.org/EllipticCurve/Q/40/a/4}{\texttt{40a4}}     & $\ZZ/4\ZZ\oplus \ZZ/4\ZZ$ \\
	\href{https://www.lmfdb.org/EllipticCurve/2.2.17.1/100.1/e/2}{\texttt{100.1-e2}} & $\ZZ/13\ZZ$ &
	\href{http://www.lmfdb.org/EllipticCurve/Q/15a2}{\texttt{15a2}}   & $\ZZ/4\ZZ\oplus \ZZ/8\ZZ$ \\
	\href{https://www.lmfdb.org/EllipticCurve/Q/50/b/3}{\texttt{50b1}}     & $\ZZ/15\ZZ$ &
	\href{https://www.lmfdb.org/EllipticCurve/Q/30/a/6}{\texttt{30a2}}     & $\ZZ/4\ZZ\oplus \ZZ/12\ZZ$ \\
      \href{https://www.lmfdb.org/EllipticCurve/Q/46a1/}{\texttt{46a1}}     & $\ZZ/2\ZZ\oplus \ZZ/2\ZZ$ &
	\href{https://www.lmfdb.org/EllipticCurve/Q/210/e/6}{\texttt{210e2}}     & $\ZZ/4\ZZ\oplus \ZZ/16\ZZ$ \\
       \href{https://www.lmfdb.org/EllipticCurve/Q/96/b/4}{\texttt{96a4}}   & $\ZZ/2\ZZ\oplus \ZZ/4\ZZ$ &
       \href{https://www.lmfdb.org/EllipticCurve/Q/14/a/6}{\texttt{14a1}} & $\ZZ/6\ZZ\oplus \ZZ/6\ZZ$ \\
	\href{https://www.lmfdb.org/EllipticCurve/Q/20/a/3}{\texttt{20a2}}   & $\ZZ/2\ZZ\oplus \ZZ/6\ZZ$ &
	\href{https://www.lmfdb.org/EllipticCurve/Q/15/a/5}{\texttt{15a1}}     & $\ZZ/8\ZZ\oplus \ZZ/8\ZZ$ \\
   	 \href{https://www.lmfdb.org/EllipticCurve/Q/48/a/3}{\texttt{48a3}}   & $\ZZ/2\ZZ\oplus \ZZ/8\ZZ$ & &\\
	\hline
\end{tabular}
\end{center}
\caption{Examples of elliptic curves for each torsion structure over \(\QQ(2^{\infty})\), except for the groups \((\ZZ/6\ZZ) \oplus (\ZZ/12\ZZ)\) and \((\ZZ/12\ZZ) \oplus (\ZZ/12\ZZ)\).}  \label{tab:Examples}
\end{table}

\subsection*{Acknowledgments}
This paper was part of the author's master's thesis. He is deeply grateful to everyone who supported him throughout that period. He is especially grateful to Yuichiro Taguchi and Yuto Nagashima for their valuable discussions and insightful feedback on the initial draft of this paper. He also sincerely thanks the anonymous referee for the critical and detailed review, which significantly improved the manuscript.

\section{Torsion points of elliptic curves over \(\QQ\)}\label{elliptic curves over Q}

    In this section, we review known results concerning torsion points of elliptic curves defined over \(\QQ\). 
    In the course of proving Theorem \ref{main Theorem}, it will be crucial to determine the extension degree of the field of definition of a point of order \(p\). The main result is the following theorem:

\begin{theorem}[E. Gonz\'{a}lez-Jimen\'{e}z and F. Najman \cite{GonzalezJimenez2016GrowthOT}, Proposition 5.7]\label{exact_degree}
Let \(E\) be an elliptic curve defined over \(\QQ\), \(p\) a prime number and \(P \in E(\overline{\QQ})\) a point of order \(p\). Then, for \(p\leq 13\) or \(p=37\) all of the cases in the table below occur, and they are the only possible ones. For \(p = 17\), the cases in the table below are the only possible ones \((\)it is still unknown if the cases \([\QQ(P):\QQ] = 96\) or \(192\) might occur\()\).
\[\begin{array}{|c|c|}
\hline
p & [\QQ(P):\QQ]\\
\hline
2 & 1,2,3\\
\hline
3 & 1,2,3,4,6,8\\
\hline
5 & 1,2,4,5,8,10,16,20,24\\
\hline
7 & 1,2,3,6,7,9,12,14,18,21,24,36,42,48\\
\hline
11 & 5,10,20,{ 40},55,{ 80},100,110,120\\
\hline
13 & 3,4,6,12,{ 24},39,{ 48},52,72,78,96,{ 144},156,168\\
\hline
17 & 8, 16, 32, {\bf 96}, 136, {\bf 192}, 256, 272, 288\\
\hline
37 & 12,36,{ 72},444,{ 1296},1332, 1368\\
\hline
\end{array}\]
\end{theorem}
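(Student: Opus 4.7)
The plan is to analyze the action of \(G := \gal(\overline{\QQ}/\QQ)\) on the \(p\)-torsion \(E[p] \cong \mathbb{F}_p^2\) through the mod-\(p\) Galois representation \(\rho_p : G \to \GL_2(\mathbb{F}_p)\). Since \(\QQ(P)\) is the fixed field of the stabilizer of \(P\) in \(G\), the degree \([\QQ(P):\QQ]\) equals the length of the \(\im(\rho_p)\)-orbit of \(P\) in \(\mathbb{F}_p^2 \setminus \{0\}\). The proof therefore reduces to two independent tasks: (i) describe, up to conjugation in \(\GL_2(\mathbb{F}_p)\), every subgroup that occurs as \(\im(\rho_p)\) for some elliptic curve \(E/\QQ\), and (ii) compute the orbit sizes of each such subgroup acting on \(\mathbb{F}_p^2 \setminus \{0\}\).

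For (i), I would invoke Dickson's classification, by which a proper subgroup of \(\GL_2(\mathbb{F}_p)\) lies in a Borel subgroup, in the normalizer of a split or non-split Cartan subgroup, or in one of finitely many exceptional subgroups. Whether a given conjugacy class actually arises is governed by the rational points of the corresponding modular curve \(X_H\), and for the primes in question these rational points have been determined by Mazur's isogeny theorem together with subsequent work on the exceptional cases (for instance Bilu--Parent--Rebolledo for the normalizer of non-split Cartan at \(p \le 13\), and sporadic calculations at \(p = 17, 37\)). The two modular curves at \(p = 17\) whose non-CM rational points are not yet settled are precisely those responsible for the conditional entries \(96\) and \(192\). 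For (ii), once a concrete list of candidate images \(H\) is in hand, one enumerates the \(H\)-orbits on \(\mathbb{F}_p^2 \setminus \{0\}\) by direct computation; this is routine in \texttt{Magma} or \texttt{GAP}, and the union of the orbit sizes across all admissible \(H\) produces the upper bound displayed in the table.

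For sharpness, each degree listed must be realized by a concrete example: one exhibits an elliptic curve \(E/\QQ\) (drawn from the LMFDB) whose mod-\(p\) image contains an orbit of the prescribed length. The main obstacle is part (i): the classification of possible mod-\(p\) images is a deep synthesis resting on Mazur's theorem and its many refinements, and the remaining open cases at \(p = 17\) faithfully reflect the state of the art on rational points of modular curves. The orbit computations in (ii) and the explicit existence examples are, by comparison, mechanical.
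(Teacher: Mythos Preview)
The paper does not prove this statement at all: it is quoted as Proposition~5.7 of Gonz\'alez-Jim\'enez and Najman \cite{GonzalezJimenez2016GrowthOT} and used as a black box, so there is no ``paper's own proof'' to compare your proposal against. Your outline is in fact a faithful summary of the strategy in the original source: interpret \([\QQ(P):\QQ]\) as an orbit length under \(\im(\rho_p)\subset\GL_2(\mathbb{F}_p)\), enumerate the possible images via Dickson's classification together with the known rational points on the relevant modular curves \(X_H\) (Mazur, Bilu--Parent--Rebolledo, and later refinements), compute orbit sizes for each admissible image, and then exhibit explicit curves realizing each degree. The caveat about the two unresolved images at \(p=17\) is also correctly identified.
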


Unfortunately, stronger results are required for the proof of Theorem \ref{main Theorem}. A useful method for further restricting the possibilities of the extension degree of the field of definition of a torsion point is the concept of cyclic \(n\)-isogeny.

\begin{definition}
    Let \(E\) be an elliptic curve defined over a field \(K\) and let \(\overline{K}\) be an algebraic closure of \(K\). We say that \(E\) admits a (cyclic) \(n\)-isogeny over \(K\) if there exists an elliptic curve \(\widetilde{E}/K\) and an isogeny \(\phi: E \to \widetilde{E}\) defined over \(K\), such that the kernel of \(\phi\), \(\ker(\phi)\), is a cyclic subgroup of \(E(\overline{K})\) of order \(n\) that is invariant under the action of \(\gal(\overline{K}/K)\).
\end{definition}

\begin{remark}
    In the preceding definition, when \(K = \QQ\), we simply refer to an \(n\)-isogeny over \(\QQ\) as an \(n\)-isogeny, omitting the phrase {\it over \(\QQ\)} for simplicity.
\end{remark}

In this paper, we frequently rely on the following well-known equivalence, which allows us to interpret the existence of a cyclic \(n\)-isogeny over \(K\) in terms of the Galois-invariant cyclic subgroups of \(E(\overline{K})_{\tors}\).

\begin{lemma}[Equivalent condition for cyclic isogeny]
    Let \(E\) be an elliptic curve defined over a field \(K\) of characteristic zero. Then \(E\) admits a cyclic \(n\)-isogeny over \(K\) if and only if there exists a Galois extension \(\widetilde{K}\) of \(K\) and a cyclic subgroup \(C_n \subset E(\widetilde{K})_{\tors}\) of order \(n\), such that \(C_n\) is invariant under the action of \(\gal(\widetilde{K}/K)\).
\end{lemma}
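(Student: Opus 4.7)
The plan is to prove the two implications separately, with the only non-trivial ingredient being the standard correspondence between $\gal(\overline{K}/K)$-stable finite subgroups and $K$-rational quotient isogenies (see Silverman, \emph{The Arithmetic of Elliptic Curves}, Proposition III.4.12, or V\'elu's formulas).

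For the forward direction, I would start with a cyclic $n$-isogeny $\phi : E \to E'$ over $K$ and set $C = \ker(\phi)$, which is cyclic of order $n$ and invariant under $\gal(\overline{K}/K)$ by definition. Taking $\widetilde{K}$ to be the field generated over $K$ by the coordinates of the (finitely many) points of $C$ gives a finite extension, and the $\gal(\overline{K}/K)$-stability of $C$ ensures that $\widetilde{K}/K$ is Galois: indeed any $\sigma \in \gal(\overline{K}/K)$ permutes the points of $C$, hence permutes the coordinates generating $\widetilde{K}$. By construction $C \subset E(\widetilde{K})_{\tors}$, and the action of $\gal(\widetilde{K}/K)$ — being just the restriction of the $\gal(\overline{K}/K)$-action — preserves $C$.

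For the converse, I would first upgrade $\gal(\widetilde{K}/K)$-stability to $\gal(\overline{K}/K)$-stability. Because $\widetilde{K}/K$ is Galois, the restriction map $\gal(\overline{K}/K) \to \gal(\widetilde{K}/K)$ is surjective; any $\sigma \in \gal(\overline{K}/K)$ restricts to some $\bar\sigma \in \gal(\widetilde{K}/K)$, and for a point $P \in C_n$ whose coordinates lie in $\widetilde{K}$ we have $\sigma(P) = \bar\sigma(P) \in C_n$. Then I would invoke the quotient-isogeny result above, which produces an elliptic curve $\widetilde{E}/K$ together with a separable $K$-rational isogeny $\phi : E \to \widetilde{E}$ with $\ker(\phi) = C_n$. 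Since $C_n$ is cyclic of order $n$, this $\phi$ is a cyclic $n$-isogeny over $K$, as required.

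The main — and essentially only — substantive input is the quotient-isogeny theorem used in the reverse direction; the rest is a bookkeeping argument relying on the surjectivity of the restriction map $\gal(\overline{K}/K) \twoheadrightarrow \gal(\widetilde{K}/K)$, which is exactly what makes the two notions of Galois invariance coincide for subgroups already defined over $\widetilde{K}$. I do not anticipate any serious obstacle; the statement is essentially a reformulation of standard descent for finite subgroup schemes on elliptic curves in characteristic zero.
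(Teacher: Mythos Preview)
Your argument is correct. The forward direction is immediate from the definition, and for the converse you correctly upgrade $\gal(\widetilde{K}/K)$-invariance to $\gal(\overline{K}/K)$-invariance via surjectivity of restriction and then invoke the standard quotient-isogeny construction (Silverman, Proposition III.4.12 together with the descent remark that a $\gal(\overline{K}/K)$-stable kernel yields a $K$-rational quotient).

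As for comparison with the paper: there is nothing to compare against. The paper states this lemma as a ``well-known equivalence'' and gives no proof at all; it is used only as background input for the later arguments. Your write-up therefore supplies exactly the standard justification the paper omits, and there is no alternative approach on the paper's side to contrast with yours.
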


     For elliptic curves defined over \(\QQ\), we have a complete list of the possible \(n\)-isogenies.

    \begin{theorem}[B. Mazur \cite{Mazur1978}. See \cite{robledo_field_of_definition} for a complete list of references]\label{isogeny_list}
    Let \(E\) be an elliptic curve defined over \(\QQ\). If \(E\) has an \(n\)-isogeny over \(\QQ\), then \(n \leq 19\) or \(n \in \{21, 25, 27, 37, 43, 67, 163\}\). If \(E\) does not have complex multiplication, then \(n \leq 18\) or \(n \in \{21, 25, 37\}\).
    \end{theorem}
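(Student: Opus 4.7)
The plan is to translate the statement into a question about rational points on modular curves, reduce it to prime powers, and then invoke the deep classification results on $X_0(p)$ and $X_0(p^k)$. Concretely, I first use the moduli interpretation: an elliptic curve $E/\QQ$ admits a cyclic $n$-isogeny over $\QQ$ if and only if the pair $(E, C)$, with $C \subset E(\overline{\QQ})$ a $\gal(\overline{\QQ}/\QQ)$-stable cyclic subgroup of order $n$, defines a non-cuspidal $\QQ$-point on the modular curve $Y_0(n)$. Thus Theorem \ref{isogeny_list} is equivalent to listing all $n$ for which $Y_0(n)(\QQ) \neq \emptyset$, together with a further refinement ruling out CM for certain $n$.

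Next, I would reduce to prime powers. If $n = \prod p_i^{a_i}$, then a cyclic subgroup of order $n$ decomposes canonically and Galois-equivariantly into its $p_i$-primary parts, so admitting a cyclic $n$-isogeny over $\QQ$ is equivalent to simultaneously admitting cyclic $p_i^{a_i}$-isogenies over $\QQ$. Hence I need two inputs: (a) the set of prime powers $p^a$ for which $Y_0(p^a)(\QQ) \neq \emptyset$, and (b) which products of these can be realised simultaneously on a single elliptic curve.

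For (a) with $a = 1$, the core input is Mazur's theorem from \cite{Mazur1978}: the only primes with $Y_0(p)(\QQ) \neq \emptyset$ are $p \in \{2,3,5,7,11,13,17,19,37,43,67,163\}$. The essence of Mazur's strategy is to construct the Eisenstein quotient $\widetilde{J}$ of $J_0(p)$, prove that $\widetilde{J}(\QQ)$ is finite via a descent along the Eisenstein ideal, argue that any hypothetical non-cuspidal $\QQ$-point on $X_0(p)$ reduces to a cusp modulo every prime of good reduction, and then derive a contradiction from the geometry of the special fibres together with the action of Hecke operators. This is by far the deepest step and the main obstacle of the entire proof. For $a \geq 2$, I would analyse $X_0(p^a)$ directly using the projection $X_0(p^a) \to X_0(p^{a-1})$ together with explicit genus computations; the work of Kenku and Mazur then yields $Y_0(p^a)(\QQ) \neq \emptyset$ only for $p^a \in \{4,8,16,9,27,25\}$.

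For (b), I would enumerate all products of prime powers from the above list and, for each composite $n$, test whether $Y_0(n)(\QQ)\neq\emptyset$ using the covering maps $X_0(n)\to X_0(m)$ for divisors $m\mid n$ and the explicit tables for rational points on modular curves of small genus (Kenku's series of papers). This eliminates every composite $n$ outside the stated list. Finally, to separate the CM and non-CM assertions, I would observe that the degrees $n \in \{27, 43, 67, 163\}$ correspond exclusively to CM points on $X_0(n)$: the associated elliptic curves have complex multiplication by imaginary quadratic orders of class number one whose discriminants are tied to these $n$. Combined with the classical Kenku--Momose and Olson tables of CM isogenies over $\QQ$, this isolates the exceptional degrees and yields the tighter bound $n\leq 18$ or $n\in\{21,25,37\}$ in the non-CM case.
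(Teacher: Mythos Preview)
The paper does not prove this theorem at all. Theorem~\ref{isogeny_list} is stated as a known result and attributed to Mazur~\cite{Mazur1978}, with a pointer to Lozano-Robledo~\cite{robledo_field_of_definition} for further references; it is then used as a black box in the proof of Proposition~\ref{E[p] list}. So there is no ``paper's own proof'' to compare against.

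Your outline is a reasonable high-level sketch of how the result is actually established in the literature (Mazur for primes, then Kenku's series of papers for prime powers and composites), and the moduli-theoretic reformulation and the primary decomposition step are both correct. Two minor points: your reference to ``Kenku--Momose and Olson tables'' for the CM/non-CM separation is not quite the right citation---what is really used is the list of imaginary quadratic orders of class number one together with the explicit enumeration of the finitely many non-cuspidal rational points on each $X_0(n)$; and you should also include $n=19$ among the degrees realised only by CM curves, not just $\{27,43,67,163\}$, since the theorem excludes $19$ in the non-CM case as well. But none of this is relevant to the paper, which simply quotes the result.
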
  

    M. Chou established a straightforward criterion for an elliptic curve to possess an \(n\)-isogeny in \cite{CHOU}.

    \begin{lemma}[M. Chou. Adapted from the proof of Proposition 4.1 in \cite{CHOU}]\label{cyclic_isogeny_when_cn+cnm}
    Let \(K\) be a Galois extension of \(\QQ\), and let \(E\) be an elliptic curve defined over \(\QQ\) such that \(E(K)[nm] = (\ZZ/nm\ZZ) \oplus (\ZZ/m\ZZ)\) for some positive integers \(n\) and \(m\). Then, \(E\) has an \(n\)-isogeny.
    \end{lemma}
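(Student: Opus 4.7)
The goal is to produce a Galois-stable cyclic subgroup of order $n$ in $E(\overline{\QQ})$, since by the equivalent condition stated earlier, this is exactly what is needed to conclude that $E$ admits an $n$-isogeny over $\QQ$.

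The plan is to take the subgroup $C := m \cdot E(K)[nm] \subset E(K)$ and show that it does the job. To see that $C$ is cyclic of order $n$, observe that as an abstract group, $E(K)[nm]\cong \ZZ/nm\ZZ \oplus \ZZ/m\ZZ$; under multiplication by $m$, the first factor maps onto $m\ZZ/nm\ZZ$ (which is cyclic of order $n$) while the second factor maps to zero. Hence $C$ is cyclic of order exactly $n$.

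It remains to check Galois-stability. Since $K/\QQ$ is Galois, $E(K)$ is preserved by $\gal(\overline{\QQ}/\QQ)$, and $E(K)[nm]$ is preserved as well because torsion level is intrinsic. Given $\sigma \in \gal(\overline{\QQ}/\QQ)$ and $P = mQ \in C$ with $Q \in E(K)[nm]$, we have $\sigma(P) = m\,\sigma(Q)$, and $\sigma(Q) \in E(K)[nm]$ by the preceding observation, so $\sigma(P) \in C$. Thus $C$ is a Galois-stable cyclic subgroup of order $n$, which yields the desired $n$-isogeny.

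There is essentially no obstacle here; the only thing to be careful about is verifying that the image of multiplication-by-$m$ really has the claimed cyclic structure, which reduces to the elementary computation on $\ZZ/nm\ZZ \oplus \ZZ/m\ZZ$ indicated above.
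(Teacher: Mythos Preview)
Your proof is correct and follows exactly the same approach as the paper: the paper's proof is the one-line observation that $[m](E(K)[nm])$ is a $\gal(K/\QQ)$-invariant cyclic subgroup of $E(K)$ of order $n$, and you have simply spelled out the verification of each of these properties.
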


    \begin{proof}
        This follows from the fact that \([m]\left(E(K)[nm]\right)\) is a \(\gal(K/\QQ)\)-invariant cyclic subgroup of \(E(K)\) of order \(n\).
    \end{proof}

    Using the concept of cyclic isogeny, we now prove two lemmas that provide additional restrictions on the extension degree of the field of definition of torsion points of elliptic curves defined over \(\QQ\). The first lemma follows directly from the definition of cyclic isogeny.

    \begin{lemma}\label{degree of definition isogeny}
         Let \(E\) be an elliptic curve defined over \(\QQ\), and \(K\) a Galois extension of \(\QQ\). If \(P_n \in E(K)\) is a point of order n such that the subgroup \(\left<P_n\right>\) generated by \(P_n\) is \(\gal(K/\QQ)\)-invariant, then \([\QQ(P_n):\QQ]\) divides \(\varphi(n)\).
    \end{lemma}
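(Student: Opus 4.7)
The plan is to construct a character from \(\gal(K/\QQ)\) to \((\ZZ/n\ZZ)^{\times}\) and extract the divisibility statement via the fundamental theorem of Galois theory. Since \(\left<P_n\right>\) is assumed to be \(\gal(K/\QQ)\)-invariant, and \(P_n\) is a generator of order \(n\), for every \(\sigma \in \gal(K/\QQ)\) there exists a unique class \(a_\sigma \in (\ZZ/n\ZZ)^{\times}\) (units, because \(\sigma(P_n)\) must also have order \(n\)) such that \(\sigma(P_n) = a_\sigma \cdot P_n\). I would first verify that the assignment \(\chi : \sigma \mapsto a_\sigma\) is a group homomorphism, which is a one-line check using \((\sigma\tau)(P_n) = \sigma(a_\tau P_n) = a_\sigma a_\tau \cdot P_n\).

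Next, I would identify \(\Ker(\chi)\) with \(\gal(K/\QQ(P_n))\): an element \(\sigma\) satisfies \(a_\sigma = 1\) precisely when it fixes \(P_n\), and by Galois theory the stabilizer of \(P_n\) is \(\gal(K/\QQ(P_n))\). Because \((\ZZ/n\ZZ)^{\times}\) is abelian, \(\Ker(\chi)\) is a normal subgroup of \(\gal(K/\QQ)\), so \(\QQ(P_n)/\QQ\) is Galois and the first isomorphism theorem yields an injection
\[
\gal(\QQ(P_n)/\QQ) \;\cong\; \gal(K/\QQ)/\gal(K/\QQ(P_n)) \;\hookrightarrow\; (\ZZ/n\ZZ)^{\times}.
\]
Comparing orders, \([\QQ(P_n):\QQ]\) divides \(|(\ZZ/n\ZZ)^{\times}| = \varphi(n)\), as desired.

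There is no real obstacle here; the only subtlety worth stating explicitly is the normality of \(\Ker(\chi)\), which guarantees that \(\QQ(P_n)\) is itself Galois over \(\QQ\) (so that we may speak of \(\gal(\QQ(P_n)/\QQ)\) and apply Lagrange). All other steps are routine character-theoretic bookkeeping.
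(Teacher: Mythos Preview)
Your proof is correct and follows essentially the same approach as the paper: both construct the character \(\sigma \mapsto a_\sigma\) into \((\ZZ/n\ZZ)^\times\), identify its kernel with the stabilizer of \(P_n\), and apply the first isomorphism theorem to conclude that \(\gal(\QQ(P_n)/\QQ)\) embeds into \((\ZZ/n\ZZ)^\times\). Your write-up is slightly more explicit about why \(\chi\) is a homomorphism and why \(\Ker(\chi)\) is normal, but the argument is the same.
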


    \begin{proof}
        Consider the homomorphism induced by the action of \(\gal(K/\QQ)\) on \(\left<P_n\right>\):
        \begin{align*}
           \Phi\colon \gal(K/\QQ) &\longrightarrow \left(\ZZ/n\ZZ\right)^{\times}\\
                  \sigma &\longmapsto a_{\sigma},
        \end{align*}
        where \(P_n^{\sigma} = a_{\sigma}P_n\). By the isomorphism theorem, we observe that \(\gal(\QQ(P_n)/\QQ) \cong \gal(K/\QQ)/\Ker(\Phi)\) is isomorphic to a subgroup of \(\left(\ZZ/n\ZZ\right)^{\times}\). Therefore, \([\QQ(P_n):\QQ] = \# \gal(\QQ(P_n)/\QQ)\) divides \(\varphi(n) = \# (\ZZ/n\ZZ)^\times\).
    \end{proof}

    The second lemma provides a stronger result for the case where the elliptic curve \(E\) has a \(p^{n-1}\)-isogeny of a particular form.
    
    \begin{lemma}\label{degree of definition isogeny + rational torsion}
        Let \(E\) be an elliptic curve defined over \(\QQ\), and let \(K\) be a finite Galois extension of \(\QQ\) such that \(E(K)[p^n] = \left<P_{p^n},Q_p\right> \cong (\ZZ/p^n\ZZ) \oplus (\ZZ/p\ZZ) \), where \(p\) is a prime number, \(n\) is a positive integer, \(P_{p^n} \in E(K)[p^n] \setminus E(\QQ)\) is a point of order \(p^n\), and \(Q_p \in E(\QQ)[p]\) is a point of order \(p\). If the extension degree \([K:\QQ]\) is coprime to \(p\), then \([\QQ(P_{p^n}):\QQ]\) divides \(p-1\). 
    \end{lemma}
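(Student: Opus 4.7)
The plan is to refine the argument of Lemma~\ref{degree of definition isogeny} by describing the Galois action on $P_{p^n}$ in a basis adapted to the decomposition $E(K)[p^n] = \langle P_{p^n}\rangle \oplus \langle Q_p\rangle$. Since $E(K)[p^n] \cong \ZZ/p^n\ZZ \oplus \ZZ/p\ZZ$ has order $p^{n+1}$, we must have $\langle P_{p^n}\rangle \cap \langle Q_p\rangle = \{O\}$, so every element of $E(K)[p^n]$ has a unique expression $cP_{p^n}+dQ_p$ with $c\in\ZZ/p^n\ZZ$ and $d\in\ZZ/p\ZZ$. Using that $Q_p\in E(\QQ)$ is Galois-fixed, for each $\sigma\in\gal(K/\QQ)$ I would write
\[
\sigma(P_{p^n}) = a_\sigma P_{p^n} + b_\sigma Q_p, \qquad a_\sigma \in (\ZZ/p^n\ZZ)^\times,\ b_\sigma \in \ZZ/p\ZZ,
\]
where $a_\sigma$ is a unit because $\sigma(P_{p^n})$ still has order $p^n$. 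A direct cocycle check confirms that $\Phi\colon\sigma\mapsto(a_\sigma,b_\sigma)$ is a homomorphism into a group $H$ built on the set $(\ZZ/p^n\ZZ)^\times \times \ZZ/p\ZZ$ with an appropriate semidirect-product law; what matters for the argument is only that $H$ fits into a short exact sequence
\[
1 \longrightarrow \ZZ/p\ZZ \longrightarrow H \xrightarrow{\pi} (\ZZ/p^n\ZZ)^\times \longrightarrow 1,
\]
where $\pi(a,b)=a$.

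Next I would identify $[\QQ(P_{p^n}):\QQ]$ with $|\im(\Phi)|$. The only element of $H$ fixing $P_{p^n}$ is the identity, so $\gal(K/\QQ(P_{p^n})) = \Ker(\Phi)$, and hence $[\QQ(P_{p^n}):\QQ] = |\im(\Phi)|$. In particular, $|\im(\Phi)|$ divides $[K:\QQ]$ and is therefore coprime to $p$ by hypothesis.

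To finish, note that $\Ker(\pi)$ has order $p$. Since $\im(\Phi)$ has order coprime to $p$, it meets $\Ker(\pi)$ trivially and therefore injects into $(\ZZ/p^n\ZZ)^\times$ as a subgroup of order coprime to $p$. Any such subgroup has order dividing $p-1$: for odd $p$ this follows from $(\ZZ/p^n\ZZ)^\times$ being cyclic of order $p^{n-1}(p-1)$, and for $p=2$ the group $(\ZZ/2^n\ZZ)^\times$ is itself a $2$-group, so only the trivial subgroup has odd order. In either case $[\QQ(P_{p^n}):\QQ]\mid p-1$, as claimed. The only step that requires care is the cocycle computation identifying the right semidirect-product multiplication on $H$ and verifying that $\Phi$ is a genuine homomorphism; once this bookkeeping is in place, the conclusion reduces to the elementary observation that in $H$ the prime-to-$p$ subgroups avoid the order-$p$ kernel of $\pi$.
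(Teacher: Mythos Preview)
Your argument is essentially the paper's, repackaged: the paper projects directly via $\sigma\mapsto a_\sigma$ into $(\ZZ/p^n\ZZ)^\times$ and then spends its effort showing that the kernel of this map equals the stabiliser of $P_{p^n}$ (using that every $\sigma$ has order prime to $p$), while you keep the full data $(a_\sigma,b_\sigma)$ in the semidirect product $H$, get the kernel identification for free, and use the coprimality hypothesis instead to pass from $H$ down to $(\ZZ/p^n\ZZ)^\times$ through the exact sequence. The two routes are equivalent and equally short.

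There is one small gap. Your justification ``$a_\sigma$ is a unit because $\sigma(P_{p^n})$ still has order $p^n$'' is valid only for $n\ge 2$: when $n=1$ the point $b_\sigma Q_p$ can itself have order $p$, so the order consideration alone does not exclude $a_\sigma\equiv 0$. Here you must use the hypothesis $P_{p^n}\notin E(\QQ)$: if $a_\sigma=0$ then $\sigma(P_p)=b_\sigma Q_p\in E(\QQ)$, and since $Q_p$ is Galois-fixed this forces $P_p=b_\sigma Q_p\in E(\QQ)$, a contradiction. The paper makes exactly this case split.
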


    \begin{proof}
        Consider the following map:
        \begin{align*}
            \Phi \colon \gal(K/\QQ) &\longrightarrow \left(\ZZ/p^n\ZZ\right)^{\times}\\
                  \sigma &\longmapsto a_{\sigma},
        \end{align*}
        where \(P_{p^n}^{\sigma} = a_{\sigma}P_{p^n} + b_{\sigma}Q_p\). We claim that \(\Phi\) is a well-defined group homomorphism. First, we show that \(a_{\sigma}\) is indeed coprime to \(p\). Suppose, for contradiction, that \(a_{\sigma}\) is divisible by \(p\). There are two cases to consider:
        \begin{enumerate}
        \item[(i)] If \(n = 1\), then \(P_p^{\sigma} = b_{\sigma} Q_p\), which implies that \(P_p \in E(\QQ)\). However, this contradicts the assumption that \(P_p \notin E(\QQ)\).
        \item[(ii)] If \(n \geq 2\), then \(P_{p^n}^\sigma\), hence \(P_{p^n}\), would not have order \(p^n\), which contradicts the definition of \(P_{p^n}\).
        \end{enumerate}
        Thus, \(a_{\sigma}\) must be coprime to \(p\).
        
        Next, we verify that \(\Phi\) is a group homomorphism. Let \(\sigma, \tau \in \gal(K/\QQ)\). Then,
        \[
        P_{p^n}^{\sigma\tau} = (P_{p^n}^\sigma)^\tau = (a_{\sigma}P_{p^n} + b_{\sigma}Q_p)^{\tau} = a_{\tau}a_{\sigma}P_{p^n} + (a_{\sigma}b_{\tau} + b_{\sigma})Q_p.
        \]
        Thus, \(a_{\sigma\tau} = a_{\sigma}a_{\tau}\), showing that \(\Phi\) is a group homomorphism.

        Now, consider the kernel of \(\Phi\):
        \[
        \Ker(\Phi) = \left\{ \sigma \in \gal(K/\QQ) \;|\; P_{p^n}^{\sigma} = P_{p^n} + b_{\sigma}Q_p \right\}.
        \]
        Let \(\sigma \in \Ker(\Phi)\). Composing the action of \(\sigma\) for \(\order(\sigma)\) times, we get
        \[
        P_{p^n} = P_{p^n}^{\sigma^{\order(\sigma)}} = P_{p^n} + \order(\sigma)b_{\sigma} Q_p.
        \]
        Therefore, \([\order(\sigma)b_{\sigma}] Q_p = \OO\), meaning that \(\order(\sigma)b_{\sigma}\) must be divisible by \(p\). Since \(\order(\sigma)\) is coprime to \(p\), we conclude that \(b_{\sigma}\) must be divisible by \(p\). Hence the kernel of \(\Phi\) is given by 
        \[
        \Ker(\Phi) = \left\{ \sigma \in \gal(K/\QQ)\; |\; P_{p^n}^{\sigma} = P_{p^n} \right\}.
        \]

        Finally, by Galois theory, we have 
        \[
        \gal(\QQ(P_{p^n})/\QQ) \cong \gal(K/\QQ)/\Ker(\Phi),
        \]
        which, by the isomorphism theorem, is isomorphic to a subgroup of \(\left(\ZZ/p^n\ZZ\right)^{\times}\). Since \(\varphi(p^n) = p^{n-1}(p-1)\) and \([K : \QQ]\) is coprime to \(p\), we conclude that \([\QQ(P_{p^n}) :\QQ]\) divides \(p-1\).
    \end{proof}

\section{Elliptic curves with rational \(j\)-invariant}\label{lemmas on elliptic curves}

    In the present section, we briefly review the theory of elliptic curves with rational \(j\)-invariant and prove some necessary results for the proof of Theorem \ref{main Theorem}.
    
    Let \(K\) be a field of characteristic zero. We know that every elliptic curve \(E\) defined over \(K\) can be represented by the form
 \[
 E : y^2 = x^3 + Ax + B\;\;\;\; (A, B \in K).
 \]
 Suppose its \(j\)-invariant \(j(E)\) satisfies \(j(E) \in \QQ\setminus\{0,1728\}\), and define \(E'\) to be the elliptic curve
\begin{equation}\label{eq: rational quadratic twist}
        E' \colon y^2  = x^3 - \frac{27j(E)}{j(E) - 1728}x + \frac{54j(E)}{j(E) - 1728}.
\end{equation}
Observe that \(E'\) is defined over \(\QQ\) and satisfies \(j(E') = j(E)\). For simplicity, denote the coefficients of \(E'\) by 
\begin{align*}
    A' &\coloneq - \frac{27j(E)}{j(E) - 1728},\\
    B' &\coloneq  \frac{54j(E)}{j(E) - 1728}.
\end{align*}
Since \(j(E) = j(E')\) is not 0 or 1728, by the definition of the \(j\)-invariant, all these coefficients \(A,B,A'\) and \(B'\) are non-zero. It is known (see J. H. Silverman \cite{Silverman:1338326}) that \(E'\) is a quadratic twist of \(E\) by \(D = AB'/BA' \in K\), that is \[E' = E^D:\; y^2 = x^3 + AD^2x + BD^3,\] and there exists an isomorphism \(\phi\), defined over the field \(L_{K,D} \coloneq K(\sqrt{D})\), as follows:
  \begin{align}\label{eq: isomorphism}
  \begin{split}
      \phi\colon E &\overset{\sim}{\longrightarrow} E' \\
             (x,y) &\longmapsto \left(xD, yD\sqrt{D}\right).
    \end{split}
  \end{align}

  \begin{remark}
      The field \(L_{K,D}\) depends only on the choice of the equation for the elliptic curve \(E\).
  \end{remark}

    We  use the above notation for the rest of this section.

    \begin{notation}
        From now, if \(E\) is an elliptic curve defined over a field \(K\) of characteristic zero, satisfying \(j(E) \in \QQ\setminus\{0,1728\}\), then \(E'\) is the elliptic curve defined by the formula in \eqref{eq: rational quadratic twist}, \(D\) is the element of \(K\) such that \(E' = E^D\), \(L = L_{K,D} = K(\sqrt{D})\) is the field of definition of the isomorphism \(\phi\), as defined in \eqref{eq: isomorphism}, and \(\widehat{L}\) is the Galois closure of \(L\) over \(\QQ\), in case \(L/\QQ\) is an algebraic extension.
    \end{notation}

    From the discussion above, we get a simple lemma about the image by \(\phi\) of points of finite order of \(E\).

    \begin{lemma}\label{remark about field of definition when twisting}
        If \(E(K)\) has a point \(P_n\) of order \(n\), then its image \(\phi(P_n)\) in \(E'(L)\) belongs to \(E'(K)\) if and only if   \(\sqrt{D} \in K\), that is \(L = K\), or \(n = 2\).
    \end{lemma}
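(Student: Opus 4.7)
The plan is simply to unwind the explicit formula for $\phi$ in \eqref{eq: isomorphism}. Writing $P_n = (x,y)$ with $x,y \in K$ (legitimate since $n \geq 2$ forces $P_n \neq \mathcal{O}$), I would apply $\phi$ to obtain $\phi(P_n) = (xD, yD\sqrt{D})$. Because $xD$ lies in $K$ automatically, the question of whether $\phi(P_n) \in E'(K)$ collapses to a single condition: whether $yD\sqrt{D} \in K$.

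From this reformulation, the ``if'' direction is immediate. If $\sqrt{D} \in K$ then $yD\sqrt{D}$ is a product of elements of $K$ and hence lies in $K$. If instead $n = 2$, then $P_n$ is a $2$-torsion point of a curve in short Weierstrass form, so its $y$-coordinate vanishes, and $yD\sqrt{D} = 0 \in K$ regardless of $D$.

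For the converse I would argue contrapositively. Assume $n \neq 2$. Then $P_n$ is not a $2$-torsion point, so on the short Weierstrass model one has $y \neq 0$; together with the fact that $D \neq 0$ (a consequence of the hypothesis $j(E) \notin \{0,1728\}$ already recorded in the setup of the section), this gives $yD \in K^{\times}$. Hence the condition $yD\sqrt{D} \in K$ forces $\sqrt{D} = (yD)^{-1}\cdot yD\sqrt{D} \in K$, i.e.\ $L = K$, as required.

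I do not anticipate any genuine obstacle here: the lemma is essentially a bookkeeping statement translating the coordinatewise action of $\phi$ into a rationality condition, and the entire argument reduces to the observation that, on a short Weierstrass model, the $2$-torsion is precisely the locus $y = 0$.
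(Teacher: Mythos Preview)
Your proposal is correct and follows essentially the same approach as the paper's own proof: both write $P_n=(x,y)$, compute $\phi(P_n)=(xD,yD\sqrt{D})$, and reduce the question to whether $yD\sqrt{D}\in K$, noting that $y=0$ characterizes the $2$-torsion on the short Weierstrass model. Your version is slightly more detailed (explicitly invoking $D\neq 0$ and spelling out both implications), but there is no substantive difference.
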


    \begin{proof}
        Let \(P_n = (x,y) \in E(K)\). Its image \(\phi(P) = (xD, yD\sqrt{D})\) belongs to \(E'(K)\) if and only if   \(yD\sqrt{D} \in K\), which is equivalent to \(\sqrt{D} \in K\) or \(y = 0\). The latter case means that \(P_n = (x, 0)\) has order \(n = 2\).
    \end{proof}

    The idea of the proof of Theorem \ref{main Theorem} is to relate the properties of elliptic curves defined over the field \(\QQ(2^{\infty})\) with \(j\)-invariant in \(\QQ\setminus\{0,1728\}\), to properties of elliptic curves defined over \(\QQ\). 

    The most general theorem of this nature is the following:
    
    \begin{theorem}[E. Gonz\'alez-Jimen\'ez and J. M. Tornero \cite{gonzalesjimenesalone}, Theorem 3]\label{twist_torsion}
    Let \(E\) be an elliptic curve over a field \(K\) of characteristic zero, \(D\) a non-square element of \(K\), \(L = K(\sqrt{D})\) and \(E^D\) a quadratic twist of \(E\) by \(D\). There exist a pair of homomorphisms 
    \begin{equation*}
        E(L) \overset{\Psi}{\longrightarrow} E(K) \times E^D(K) \overset{\overline{\Psi}}{\longrightarrow} E(L)
    \end{equation*}
    such that \(\overline{\Psi} \circ \Psi = [2]\) and \(\Psi \circ \overline{\Psi} = [2] \times [2]\). 
    
    Moreover, \(\Ker(\Psi) \subset E(L)[2]\), \(\Ker(\overline{\Psi}) \subset E(K)[2] \times E^D(K)[2]\), and the exponent of both groups \(\coker(\Psi)\) and \(\coker(\overline{\Psi})\) are equal to 2.

    Finally, for \(n\) odd, there exists an isomorphism 
    \begin{equation*}
        E(L)[n] \cong E(K)[n] \times E^D(K)[n].
    \end{equation*}
\end{theorem}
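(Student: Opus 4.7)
The plan is to exhibit $\Psi$ and $\overline\Psi$ by explicit formulas built from the twist isomorphism $\phi \colon E \overset{\sim}{\longrightarrow} E^D$ defined over $L$ (analogous to the map in \eqref{eq: isomorphism}). Let $\sigma$ denote the non-trivial element of $\gal(L/K)$. I would set
\[
\Psi(P) \coloneq \bigl(P + P^{\sigma},\; \phi(P - P^{\sigma})\bigr),
\qquad
\overline\Psi(P, Q) \coloneq P + \phi^{-1}(Q).
\]
The technical identity underlying both definitions is the characteristic property of the quadratic twist, $\sigma \circ \phi = \phi \circ [-1] \circ \sigma$ as maps $E(\overline{K}) \to E^D(\overline{K})$. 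Equivalently, $\phi$ carries the anti-invariant subgroup $\{R \in E(L) : R^{\sigma} = -R\}$ into $E^D(K)$, and dually $(\phi^{-1}(S))^{\sigma} = -\phi^{-1}(S)$ for every $S \in E^D(K)$. Applied to $R = P - P^{\sigma}$, this shows $\phi(P - P^{\sigma}) \in E^D(K)$, so $\Psi$ is well-defined, and both $\Psi$ and $\overline\Psi$ are manifestly homomorphisms.

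The composition identities then reduce to direct substitution. For $P \in E(L)$,
\[
\overline\Psi(\Psi(P)) \;=\; (P + P^{\sigma}) + \phi^{-1}\bigl(\phi(P - P^{\sigma})\bigr) \;=\; 2P.
\]
For $(P, Q) \in E(K) \times E^D(K)$, using $P^{\sigma} = P$ and $(\phi^{-1}(Q))^{\sigma} = -\phi^{-1}(Q)$, expanding $\Psi(P + \phi^{-1}(Q))$ collapses termwise to $(2P, 2Q)$.

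From these two identities, all remaining claims fall out formally. If $\Psi(P) = 0$ then $P + P^{\sigma} = 0$ and $P - P^{\sigma} = 0$, so $2P = 0$ and $\Ker(\Psi) \subset E(L)[2]$. If $\overline\Psi(P, Q) = 0$ then $P = -\phi^{-1}(Q)$; applying $\sigma$ and using $P^{\sigma} = P$ and $(\phi^{-1}(Q))^{\sigma} = -\phi^{-1}(Q)$ gives $P = \phi^{-1}(Q)$, hence $2P = 2Q = 0$. The composition identities immediately imply $2\bigl(E(K) \times E^D(K)\bigr) \subset \im(\Psi)$ and $2E(L) \subset \im(\overline\Psi)$, so both $\coker(\Psi)$ and $\coker(\overline\Psi)$ are quotients of abelian groups of exponent $2$. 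Finally, for odd $n$, multiplication by $2$ is invertible on any group killed by $n$; restricting $\Psi$ to $E(L)[n]$, the identities $\overline\Psi \circ \Psi = [2]$ and $\Psi \circ \overline\Psi = [2] \times [2]$ become automorphisms of the respective $n$-torsion groups, which forces $\Psi|_{E(L)[n]}$ to be an isomorphism onto $E(K)[n] \times E^D(K)[n]$.

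The main step — indeed the only non-formal one — is verifying the anti-invariance property $\sigma \circ \phi = \phi \circ [-1] \circ \sigma$. This reduces to a one-line Weierstrass-coordinate check using $\sigma(\sqrt{D}) = -\sqrt{D}$ and the explicit form $\phi(x, y) = (xD,\, yD\sqrt{D})$, so I do not expect any substantive obstacle; everything after that point is routine manipulation of the two composition identities.
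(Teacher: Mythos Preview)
The paper does not supply its own proof of this theorem: it is quoted verbatim as a cited result of Gonz\'alez-Jim\'enez and Tornero, so there is no in-paper argument to compare against. Your construction is the standard one and is correct; the explicit maps $\Psi(P) = (P + P^{\sigma},\, \phi(P - P^{\sigma}))$ and $\overline\Psi(P,Q) = P + \phi^{-1}(Q)$, together with the twist identity $\sigma\circ\phi = \phi\circ[-1]\circ\sigma$, yield exactly the composition laws $\overline\Psi\circ\Psi = [2]$ and $\Psi\circ\overline\Psi = [2]\times[2]$, from which the kernel, cokernel, and odd-$n$ isomorphism statements follow as you outline. One cosmetic remark: the theorem as stated asserts the cokernels have exponent \emph{equal} to $2$, whereas your argument (correctly) only gives exponent \emph{dividing} $2$; this is a quirk of the statement rather than a gap in your reasoning, and in practice the exponent is exactly $2$ whenever the relevant $2$-torsion is nontrivial.
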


    In Section \ref{section main}, we  prove, by contradiction, that in several cases the extension \(L/\QQ\) is Galois. For this we  need the following lemma: 

 \begin{lemma}\label{conjugate_torsion}
    Let \(E\) be an elliptic curve defined over \(\QQ\), \(L\) an extesion of \(\QQ\) and \(\widehat{L}\) its Galois closure over \(\QQ\). For every \(\sigma \in \gal(\widehat{L}/\QQ)\), we have a \textbf{group} isomorphism:
    \begin{align*}
        \nu : E(L) &\overset{\sim}{\longrightarrow} E(L^\sigma) \\
                  (x,y)       &\longmapsto (\sigma(x), \sigma(y)),
    \end{align*}
    where \(L^{\sigma}\) is the image of \(L\) by \(\sigma\).
\end{lemma}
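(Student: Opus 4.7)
The plan is to verify the lemma directly from two basic facts: since \(E\) is defined over \(\QQ\), both its Weierstrass equation and its group law are given by polynomial expressions with coefficients in \(\QQ\), which are pointwise fixed by every \(\sigma \in \gal(\widehat{L}/\QQ)\). Hence \(\sigma\) acts compatibly on both the underlying point set and on the group operation, and the map \(\nu\) is essentially induced by base change along the \(\QQ\)-isomorphism \(\sigma|_L \colon L \to L^\sigma\).

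First, I would fix an affine Weierstrass model \(y^2 + a_1 xy + a_3 y = x^3 + a_2 x^2 + a_4 x + a_6\) for \(E\) with \(a_i \in \QQ\), and take any point \((x,y) \in E(L)\). Applying \(\sigma\), which is a ring automorphism of \(\widehat{L}\) fixing \(\QQ\), to the defining equation yields
\[
\sigma(y)^2 + a_1 \sigma(x)\sigma(y) + a_3 \sigma(y) = \sigma(x)^3 + a_2 \sigma(x)^2 + a_4 \sigma(x) + a_6,
\]
so \((\sigma(x),\sigma(y)) \in E(L^\sigma)\); setting \(\nu(\OO) = \OO\) then extends \(\nu\) to a well-defined set map \(E(L) \to E(L^\sigma)\). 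Applying the same construction with \(\sigma^{-1}\) in place of \(\sigma\) yields a set map \(E(L^\sigma) \to E(L)\) which is clearly a two-sided inverse of \(\nu\), so \(\nu\) is a bijection.

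To show that \(\nu\) is a group homomorphism, I would invoke the explicit addition formulas on \(E\): for \(P_1, P_2 \in E(L)\), the coordinates of \(P_1 + P_2\) are rational functions in the coordinates of \(P_1\) and \(P_2\) whose numerators and denominators are polynomials in \(a_1,\ldots,a_6 \in \QQ\). Since \(\sigma\) is a ring automorphism fixing \(\QQ\), it commutes with the evaluation of such rational functions, and hence \(\nu(P_1+P_2) = \nu(P_1) + \nu(P_2)\), the left-hand sum being taken in \(E(L)\) and the right-hand one in \(E(L^\sigma)\). A short case split (generic addition versus doubling, negation, and identity) handles the instances where a denominator in the generic formula vanishes.

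I do not anticipate any substantial obstacle: this is the standard functoriality statement expressing that the \(L\)-valued points of a \(\QQ\)-scheme transform naturally under automorphisms of an ambient field. The hypothesis that \(\widehat{L}/\QQ\) is Galois is used only to guarantee that \(\sigma\) is a well-defined automorphism of an ambient field containing both \(L\) and \(L^\sigma\), so that \(L^\sigma\) is itself a well-defined subfield and \(\nu\) indeed lands in \(E(L^\sigma)\).
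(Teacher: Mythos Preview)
Your proof is correct and follows essentially the same approach as the paper: the paper's proof notes that \(E\) being defined over \(\QQ\) makes \((\sigma(x),\sigma(y))\) a point of \(E(L^\sigma)\), invokes \((P+Q)^\sigma = P^\sigma + Q^\sigma\) for the homomorphism property, and uses \(\sigma^{-1}\) for the inverse. Your version simply unpacks these steps in more detail, spelling out the Weierstrass equation and the rational addition formulas explicitly.
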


    \begin{proof}
    Since \(E\) is defined over \(\QQ\), the point \((\sigma(x),\sigma(y))\) belongs to \(E(L^\sigma)\), and \(\nu\) is a well-defined map. Since \((P + Q)^{\sigma} = P^{\sigma} + Q^{\sigma}\), \(\nu\) is a group homomorphism. Finally, using the same argument to \(\sigma^{-1}\), we conclude that \(\nu\) is an isomorphism.
\end{proof}

    Next, we  prove some lemmas concerning how points of finite order of \(E(K)\) influence the nature of the field extension \(L/\QQ\). Before that, we fix a notation we  use in the rest of this paper.

    \begin{notation}
        Let \(G\) and \(H\) be groups. For the sake of simplicity, we write \(G \subseteq E(K)\) and \(E(K) \subseteq H\) when \(E(K)\) has a subgroup isomorphic to \(G\) and \(E(K)\) is isomorphic to a subgroup of \(H\), respectively.
    \end{notation}

 \begin{lemma}\label{L galois}
      Let \(E\) be an elliptic curve defined over a Galois extension \(K\) of \(\QQ\), satisfying \(j(E)  \in \QQ\setminus\{0,1728\}\). If \(\ZZ/n\ZZ \subseteq E(K)\) and \((\ZZ/n\ZZ) \oplus (\ZZ/n\ZZ) \subseteq E'(L)\) for some integer \(n \geq 3\), then \(L/\QQ\) is a Galois extension.
  \end{lemma}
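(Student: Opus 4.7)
The strategy is proof by contradiction: assume $L/\QQ$ is not Galois and deduce that $E'[n]$ must already lie in $E'(K)$, which will then conflict with the hypothesis $\ZZ/n\ZZ\subseteq E(K)$ via Lemma~\ref{remark about field of definition when twisting}. Since $K/\QQ$ is Galois, $L$ failing to be Galois forces $L \neq K$, so $[L:K]=2$ with $L = K(\sqrt{D})$. For any $\sigma \in \gal(\widehat{L}/\QQ)$, the Galoisness of $K$ gives $\sigma(L) = K(\sqrt{\sigma(D)})$. Pick $\sigma$ with $\sigma(L)\neq L$; then either $\sigma(L) = K$ or $\sigma(L)$ is a quadratic extension of $K$ distinct from $L$, and in both subcases a short degree argument yields $L \cap \sigma(L) = K$.

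The main step is to show $E'[n] \subseteq E'(K)$. Since $E'$ is defined over $\QQ$, its $n$-torsion $E'[n]$ is stable under $\gal(\overline{\QQ}/\QQ)$. Applying Lemma~\ref{conjugate_torsion} to $E'/\QQ$, the group isomorphism $E'(L)\overset{\sim}{\to} E'(\sigma(L))$ sends the subset of $n$-torsion points to itself, so the hypothesis $E'[n]\subseteq E'(L)$ yields $E'[n]\subseteq E'(\sigma(L))$. Therefore
\[
E'[n] \;\subseteq\; E'(L)\cap E'(\sigma(L)) \;=\; E'(L \cap \sigma(L)) \;=\; E'(K).
\]

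To conclude, take a point $P\in E(K)$ of order $n\geq 3$ using the hypothesis $\ZZ/n\ZZ \subseteq E(K)$; then $\phi(P)\in E'[n]\subseteq E'(K)$. But Lemma~\ref{remark about field of definition when twisting} says $\phi(P)\in E'(K)$ forces $L = K$ or $n=2$, neither of which holds, giving the desired contradiction.

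I expect the crux of the argument to be the descent in the second paragraph: the combination of the Galois-stability of $E'[n]$ with the elementary fact $L\cap\sigma(L) = K$ forces the full $n$-torsion of $E'$ down to $E'(K)$, a substantial strengthening of the hypothesis. Once this descent is in hand, the twist isomorphism $\phi$ and Lemma~\ref{remark about field of definition when twisting} close the contradiction essentially for free.
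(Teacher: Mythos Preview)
Your argument is correct, but the paper takes a different and somewhat shorter route. Instead of arguing by contradiction, the paper proceeds directly: assuming $L\neq K$, Lemma~\ref{remark about field of definition when twisting} (applied exactly as you do at the end) shows that $E'[n]\not\subseteq E'(K)$; since $E'[n]\subseteq E'(L)$ and $[L:K]=2$, this forces $L=K(E'[n])$. The field $K(E'[n])$ is then Galois over $\QQ$ as the compositum of the Galois extensions $K/\QQ$ and $\QQ(E'[n])/\QQ$. Your approach trades this one-line identification for a descent via a Galois conjugate $\sigma(L)$, pushing $E'[n]$ down to $E'(L\cap\sigma(L))=E'(K)$ to reach a contradiction. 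Both arguments hinge on the same two ingredients (the Galois-stability of $E'[n]$ and Lemma~\ref{remark about field of definition when twisting}), just assembled in opposite directions; the paper's version is more economical, while yours has the minor advantage of rehearsing the conjugate-field technique that reappears in the proof of Lemma~\ref{L hat is of order 16}. One small cosmetic point: the case $\sigma(L)=K$ in your dichotomy cannot actually occur, since $\sigma$ is an automorphism of $\widehat{L}$ fixing $K$, so $\sigma(L)$ is always a quadratic extension of $K$.
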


  \begin{proof}
        Suppose \(L\neq K\). The second hypothesis is equivalent to \(E'[n] \subseteq E'(L)\). By Lemma \ref{remark about field of definition when twisting}, we know that \(E'[n] \not\subseteq E'(K)\) and, consequently, \(L = K(E'[n])\). This last field is the compositum of Galois extensions of \(\QQ\), so it is itself Galois over \(\QQ\).
  \end{proof}

  \begin{lemma}\label{L hat is of order 16}
      Let \(E\) be an elliptic curve defined over a Galois extension \(K\) of \(\QQ\), satisfying \(j(E)  \in \QQ\setminus\{0,1728\}\). If \(\ZZ/p^n\ZZ \subseteq E(K)\) and \(\ZZ/p\ZZ \not\subseteq E'(K)\) for some odd prime \(p\) and some positive integer \(n\), and the extension \(L/\QQ\) is not Galois, then the Galois closure \(\widehat{L}\) of \(L\) over \(\QQ\) satisfies \([\widehat{L}:K] = 4\), and \((\ZZ/p^n\ZZ) \oplus (\ZZ/p^n\ZZ) \subseteq E'(\widehat{L})\).
  \end{lemma}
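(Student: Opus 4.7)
The idea is to use Galois conjugation to produce a second \(p^n\)-torsion point on \(E'\), show that together with the image \(\phi(P)\) it spans all of \(E'[p^n]\), and then exploit this constraint to pin down the degree of \(\widehat{L}/K\).

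Since \(L/\QQ\) is not Galois, we have \(L \neq K\), and there exists \(\sigma \in \gal(\overline{\QQ}/\QQ)\) with \(\sigma(L) \neq L\). Because \(K/\QQ\) is Galois, \(\sigma|_K \in \gal(K/\QQ)\); writing \(D' := \sigma(D) \in K\), we get \(\sigma(L) = K(\sqrt{D'})\), and \(D\) and \(D'\) represent distinct classes in \(K^\times/K^{\times 2}\), so \(L \cap \sigma(L) = K\). Let \(P \in E(K)\) be a point of order \(p^n\) and set \(Q := \phi(P) \in E'(L)\). The point \(\sigma(Q) \in E'(\sigma(L))\) again has order \(p^n\): indeed, \(E'\) is defined over \(\QQ\), so \(\sigma\) permutes its \(\overline{\QQ}\)-points, and \cref{conjugate_torsion} (applied with the Galois extension \(\widehat{L}\)) ensures that this action is a group isomorphism.

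Next I claim \(\langle Q, \sigma(Q)\rangle \cong (\ZZ/p^n\ZZ) \oplus (\ZZ/p^n\ZZ)\). Otherwise the subgroup is cyclic of order \(p^n\), forcing \(\sigma(Q) = kQ\) for some integer \(k\) coprime to \(p\); but then \(\sigma(Q) \in E'(L) \cap E'(\sigma(L)) = E'(K)\), contradicting the hypothesis \(\ZZ/p\ZZ \not\subseteq E'(K)\). Hence \((\ZZ/p^n\ZZ) \oplus (\ZZ/p^n\ZZ) \subseteq E'(L \cdot \sigma(L)) \subseteq E'(\widehat{L})\); in particular \(E'[p^n] \subseteq E'(L \cdot \sigma(L))\). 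The lower bound \([\widehat{L} : K] \geq 4\) follows from \(L/\QQ\) not being Galois, since this gives \(\widehat{L} \supsetneq L\) and hence \([\widehat{L} : K] \geq 2[L:K] = 4\). For the upper bound, suppose \([\widehat{L} : K] > 4\): then some conjugate \(\sqrt{\tau(D)}\) lies outside \(L \cdot \sigma(L)\), so \(\tau(L) \cap (L \cdot \sigma(L)) = K\). Repeating the construction with \(\tau\) in place of \(\sigma\) produces a \(p^n\)-torsion point \(\tau(Q) \in E'(\tau(L))\); combined with \(E'[p^n] \subseteq E'(L \cdot \sigma(L))\), this yields \(\tau(Q) \in E'(K)\), again contradicting \(\ZZ/p\ZZ \not\subseteq E'(K)\).

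The most delicate point is checking that \(\sigma(Q) = \sigma(\phi(P))\) really is a rational point of \(E'\) over \(\sigma(L)\), rather than of its Galois conjugate \(E^\sigma\); this is precisely where the \(\QQ\)-rationality of \(E'\) enters, making its defining equation \(\sigma\)-invariant even though neither \(E\) nor \(\phi\) has this property. After that, the rest is essentially a pigeonhole argument inside the rank-\(2\) module \(E'[p^n] \cong (\ZZ/p^n\ZZ)^2\), which can accommodate at most two \(\mathbb{F}_2\)-independent Galois-conjugate quadratic contributions — capping \([\widehat{L}:K]\) at \(4\).
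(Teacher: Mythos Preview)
Your overall strategy matches the paper's: transport \(P\) to \(Q=\phi(P)\in E'(L)\), pick \(\sigma\) with \(L^\sigma\neq L\), show that \(Q\) and \(\sigma(Q)\) generate \(E'[p^n]\), and read off \([\widehat L:K]=4\).

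There is, however, a genuine gap in your independence argument when \(n\geq 2\). You write: ``Otherwise the subgroup is cyclic of order \(p^n\), forcing \(\sigma(Q)=kQ\).'' This dichotomy is only valid for \(n=1\). For \(n\geq 2\), the subgroup \(\langle Q,\sigma(Q)\rangle\) of \(E'[p^n]\cong(\ZZ/p^n\ZZ)^2\) could be isomorphic to \((\ZZ/p^n\ZZ)\oplus(\ZZ/p^m\ZZ)\) with \(0<m<n\); in that case \(\sigma(Q)\) is \emph{not} a multiple of \(Q\), so your contradiction does not fire. Since the lemma is later applied with \(n=2\) (in the treatment of points of order \(9\)), this case cannot be ignored.

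The fix is immediate and is exactly what the paper does: argue directly with a linear relation. If \(aQ+b\,\sigma(Q)=\OO\) with \(a,b\) not both divisible by \(p^n\), then \(aQ=-b\,\sigma(Q)\in E'(L)\cap E'(L^\sigma)=E'(K)\). The hypothesis \(\ZZ/p\ZZ\not\subseteq E'(K)\) forces \(aQ=\OO\), hence \(p^n\mid a\), and then \(p^n\mid b\) as well. This establishes \(\langle Q,\sigma(Q)\rangle=E'[p^n]\) for all \(n\). With this correction your argument goes through; your pigeonhole step for the upper bound on \([\widehat L:K]\) is a valid alternative to the paper's observation that \(K(E'[p^n])=L\cdot L^\sigma\) is automatically Galois over \(\QQ\).
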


    \begin{proof}
        Since the extension \(L/\QQ\) is not Galois and \(E'\) is a quadratic twist of \(E\), the contrapositive of Lemma \ref{L galois}, together with Theorem \ref{twist_torsion}, implies that \(E(K)[p^n] \cong \ZZ/p^n\ZZ\) and \(E'(K)[p^n] = \{\OO\}\). Let \(P_{p^n} \in E(K)\) be a point of order \(p^n\) and \(\phi(P_{p^n})\) its image in \(E'(L)\). Theorem \ref{twist_torsion} implies that \(E'(L)[p^n] = \left<\phi(P_{p^n})\right> \cong \ZZ/p^n\ZZ\). Let \(\sigma \in \gal(\widehat{L}/\QQ)\) be such that \(L^{\sigma} \neq L\). By Lemma \ref{conjugate_torsion}, we see that \(E'(L^{\sigma})[p^n] = \left<\phi(P_{p^n})^{\sigma}\right> \cong \ZZ/p^n\ZZ\). We claim that \(E'[p^n] = \left<\phi(P_{p^n}), \phi(P_{p^n})^{\sigma}\right>\). 
        Indeed, if \(a\) and \(b\) are integers such that \([a]\phi(P_{p^n}) + [b]\phi(P_{p^n})^{\sigma} = \OO\), then both \([a]\phi(P_{p^n})\) and \([b]\phi(P_{p^n})^{\sigma}\) belong to \(E'(L \cap L^{\sigma})[p^n] = E'(K)[p^n] = \{\OO\}\), hence we conclude that \(a \equiv b \equiv 0 \pmod{p^n}\), which proves our claim. Finally, since \(L = K(\phi(P_{p^n}))\) and \(L^{\sigma} = K(\phi(P_{p^n})^{\sigma})\) are quadratic extensions of \(K\), we conclude that  \(\widehat{L} = K(\phi(P_{p^n}),\phi(P_{p^n})^{\sigma}) = K(E'[p^n])\) is a quartic extension of \(K\) and \((\ZZ/p^n\ZZ) \oplus (\ZZ/p^n\ZZ) = E'[p^n] \subseteq E'(\widehat{L})\).
    \end{proof}

    For points of order \(2^n\) we have a slightly different result.

    \begin{lemma}\label{L hat is of order 16 case p = 2}
    Let \(E\) be an elliptic curve defined over a Galois extension \(K\) of \(\QQ\), satisfying \(j(E)  \in \QQ\setminus\{0,1728\}\). If \(\ZZ/2^n\ZZ \subseteq E(K)\) for some integer \(n \geq 2\), \(E'(K)[2^{\infty}] \subseteq (\ZZ/2\ZZ)\oplus(\ZZ/2\ZZ)\) and the extension \(L/\QQ\) is not Galois, then the Galois closure \(\widehat{L}\) of \(L\) over \(\QQ\) satisfies \([\widehat{L}:K] = 4\), and \((\ZZ/2^{n-1}\ZZ) \oplus (\ZZ/2^{n-1}\ZZ) \subseteq E'(\widehat{L})\).
    \end{lemma}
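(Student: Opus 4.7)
The plan is to follow the structure of Lemma~\ref{L hat is of order 16} (the odd-prime analogue), with modifications reflecting the weaker hypothesis $E'(K)[2^{\infty}] \subseteq (\ZZ/2\ZZ) \oplus (\ZZ/2\ZZ)$. First I would verify $L \neq K$: otherwise $L/\QQ = K/\QQ$ would be Galois, contradicting the hypothesis, so $[L:K]=2$. Picking a point $P_{2^n} \in E(K)$ of order $2^n \geq 4$, Lemma~\ref{remark about field of definition when twisting} gives $R_1 := \phi(P_{2^n}) \in E'(L) \setminus E'(K)$, hence $L = K(R_1)$. Since $L/\QQ$ is not Galois, I would choose $\sigma \in \gal(\widehat{L}/\QQ)$ with $L^{\sigma} \neq L$ and set $R_2 := R_1^{\sigma}$; by Lemma~\ref{conjugate_torsion}, $R_2 \in E'(L^{\sigma})[2^n]$ has order $2^n$.

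The core step is the analysis of $\langle R_1\rangle \cap \langle R_2\rangle$. Because $L$ and $L^{\sigma}$ are two distinct quadratic extensions of the Galois field $K$, $L \cap L^{\sigma} = K$, and consequently
\begin{equation*}
\langle R_1\rangle \cap \langle R_2\rangle \;\subseteq\; E'(L)[2^{\infty}] \cap E'(L^{\sigma})[2^{\infty}] \;\subseteq\; E'(K)[2^{\infty}] \;\subseteq\; (\ZZ/2\ZZ) \oplus (\ZZ/2\ZZ).
\end{equation*}
As a cyclic subgroup of a group of exponent $2$, this intersection has order $1$ or $2$, so $|\langle R_1, R_2\rangle| \in \{2^{2n}, 2^{2n-1}\}$. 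Because $\langle R_1, R_2\rangle$ sits inside $E'[2^n] \cong (\ZZ/2^n\ZZ)^{2}$ and contains an element of order $2^n$, the invariant factor decomposition forces $\langle R_1, R_2\rangle \cong (\ZZ/2^n\ZZ) \oplus (\ZZ/2^n\ZZ)$ or $(\ZZ/2^n\ZZ) \oplus (\ZZ/2^{n-1}\ZZ)$. Both groups contain a copy of $(\ZZ/2^{n-1}\ZZ) \oplus (\ZZ/2^{n-1}\ZZ)$, yielding
\begin{equation*}
(\ZZ/2^{n-1}\ZZ) \oplus (\ZZ/2^{n-1}\ZZ) \;\subseteq\; \langle R_1, R_2\rangle \;\subseteq\; E'(L \cdot L^{\sigma}) \;\subseteq\; E'(\widehat{L}).
\end{equation*}

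For the degree assertion $[\widehat{L}:K] = 4$, the inequality $\geq 4$ is immediate from $L \cdot L^{\sigma} \subseteq \widehat{L}$ and $[L \cdot L^{\sigma}:K] = 4$; the reverse inequality is the main obstacle. In the first subcase (trivial intersection), $\langle R_1, R_2\rangle = E'[2^n]$ and the argument of Lemma~\ref{L hat is of order 16} applies verbatim: $K(E'[2^n])/\QQ$ is Galois (because $E'$ is defined over $\QQ$) and contains $L$, so $\widehat{L} \subseteq K(E'[2^n]) = L \cdot L^{\sigma}$. In the second subcase one must verify Galois-stability of the index-$2$ subgroup $\langle R_1, R_2\rangle \subset E'[2^n]$; via the canonical surjection $E'[2^n] \twoheadrightarrow E'[2]$ given by $P \mapsto [2^{n-1}]P$, this reduces to Galois-stability of the order-$2$ subgroup $\langle T\rangle \subset E'[2]$ generated by the common element $T := [2^{n-1}]R_1 = [2^{n-1}]R_2 \in E'(K)[2]$. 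This is the point at which the $p = 2$ argument genuinely departs from the odd-prime case and where the bulk of the technical care is required.
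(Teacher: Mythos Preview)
Your plan mirrors the paper's: the same choice of $\sigma$, the same dichotomy (whether $\langle R_1\rangle \cap \langle R_2\rangle$ is trivial or of order $2$, equivalently whether $[2^{n-1}]R_1 \neq [2^{n-1}]R_2$ or not), and the same argument in the trivial-intersection case (the paper's case~(ii)). The torsion containment $(\ZZ/2^{n-1}\ZZ)^2 \subseteq E'(\widehat L)$ is correctly deduced throughout. The gap is exactly where you flag it. In your second subcase, the reduction is valid: $\langle R_1, R_2\rangle$ is the preimage of $\langle T\rangle$ under $[2^{n-1}]\colon E'[2^n]\to E'[2]$, so its $\gal(\overline{\QQ}/\QQ)$-stability is equivalent to that of $\langle T\rangle$. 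But stability of $\langle T\rangle$ means $T\in E'(\QQ)$, and nothing in the hypotheses guarantees this --- you only know $T\in E'(K)[2]$. The route can be rescued by noting that if some $\tau$ moves $T$, then $[2^{n-1}]R_1^\tau = T^\tau \neq T = [2^{n-1}]R_1$, so the pair $(R_1,R_1^\tau)$ lands in your \emph{first} subcase and $\widehat L = LL^\tau$ follows; but this step is essential and absent from your outline.

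The paper bypasses the obstacle altogether. In the order-$2$-intersection case (its case~(i)) it does not try to prove $\langle R_1, R_2\rangle$ is Galois-stable; instead it shows that $[2]R_1$ and $R_1 - R_2$ generate the \emph{full} group $E'[2^{n-1}]$, and both lie in $E'(LL^\sigma)$. Full torsion is automatically $\gal(\overline{\QQ}/\QQ)$-stable, so $K(E'[2^{n-1}])/\QQ$ is Galois; since (for $n\ge 3$) $[2]R_1$ has order $\ge 4$ and thus $L = K([2]R_1) \subseteq K(E'[2^{n-1}]) \subseteq LL^\sigma$, one concludes $\widehat L = LL^\sigma$. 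The device you are missing is precisely this switch of generators from $R_1,\,R_2$ to $[2]R_1,\,R_1 - R_2$.
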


    \begin{proof}
        Let \(P_{2^n} \in E(K)[2^n]\) be a point of order \(2^n\), with image \(\phi(P_{2^n})\) in \(E'(L)\), and let \(\sigma \in \gal(\widehat{L}/\QQ)\) be such that \(L^{\sigma} \neq L\). Lemma \ref{conjugate_torsion} implies that \(E'(L^{\sigma})\) also has a point \(\phi(P_{2^n})^{\sigma}\) of order \(2^n\). Since \(n \geq 2\), we know, by Lemma \ref{remark about field of definition when twisting}, that either of the points \(\phi(P_{2^n})\) and \(\phi(P_{2^n})^{\sigma}\) does not belong to \(E'(K)\). Now, we divide the proof into two cases:
        \begin{enumerate}
            \item[(i)] The first case is when \([2^{n-1}]\phi(P_{2^n}) = [2^{n-1}]\phi(P_{2^n})^{\sigma}\). This equality, together with Lemma \ref{remark about field of definition when twisting}, implies that \(\phi(P_{2^n}) - \phi(P_{2^n})^{\sigma} \in E'(\widehat{L})[2^{n-1}]\) is a point of order \(2^{n-1}\). We claim that 
        \begin{equation}\label{eq: 1, L hat is of order 16, case p = 2}
        E'[2^{n-1}] = \left<[2]\phi(P_{2^n}), \phi(P_{2^n}) - \phi(P_{2^n})^{\sigma}\right>,
        \end{equation}
        which implies that \(\widehat{L} = LL^{\sigma}\), \([\widehat{L}:K] = 4\) and \((\ZZ/2^{n-1}\ZZ) \oplus (\ZZ/2^{n-1}\ZZ) = E'[2^{n-1}] \subseteq E'(\widehat{L})\). Indeed, let \(a\) and \(b\) be integers such that 
        \begin{equation*}
        [a]([2]\phi(P_{2^n})) + [b](\phi(P_{2^n}) - \phi(P_{2^n})^{\sigma}) = \OO.
        \end{equation*}
        Rearranging, we have
        \[
        [2a+b]\phi(P_{2^n}) - [b]\phi(P_{2^n})^{\sigma} = \OO
        \]
        which implies that both \([2a+b]\phi(P_{2^n})\) and \([b]\phi(P_{2^n})^{\sigma}\) belong to \(E'(L\cap L^{\sigma}) = E'(K)\). But since \(E'(K)[2^{\infty}] \subseteq (\ZZ/2\ZZ) \oplus (\ZZ/2\ZZ)\), we need \(b\) to be divisible by \(2^{n-1}\), hence \([2a]\phi(P_{2^n}) = \OO\), and \(a\) is also divisible by \(2^{n-1}\), concluding the proof of the equality \eqref{eq: 1, L hat is of order 16, case p = 2}.

        \item[(ii)] The second case is when \([2^{n-1}]\phi(P_{2^n}) \neq [2^{n-1}]\phi(P_{2^n})^{\sigma}\). Here, we claim that 
        \begin{equation}\label{eq: 2, L hat is of order 16, case p = 2}
        E'[2^n] = \left<\phi(P_{2^n}),\phi(P_{2^n})^{\sigma}\right>
        \end{equation}
        which implies that \(\widehat{L} = LL^{\sigma}\), \([\widehat{L}:K] = 4\) and \((\ZZ/2^{n}\ZZ) \oplus (\ZZ/2^{n}\ZZ) = E'[2^n] \subseteq E'(\widehat{L})\).
        Indeed, let \(a\) and \(b\) be integers such that 
        \begin{equation}\label{eq: linear combination}
        [a]\phi(P_{2^n}) + [b]\phi(P_{2^n})^{\sigma} = \OO.
        \end{equation}
        Here, again, both points \([a]\phi(P_{2^n})\) and \([b]\phi(P_{2^n})^{\sigma}\) belong to \(E'(L\cap L^{\sigma}) = E'(K)\). Hence, by the hypothesis \(E'(K)[2^{\infty}] \subseteq (\ZZ/2\ZZ) \oplus (\ZZ/2\ZZ)\), both \(a\) and \(b\) are divisible by \(2^{n-1}\). If neither \(a\) nor \(b\) are divisible by \(2^n\) then, by the equation \eqref{eq: linear combination}, we get \([2^{n-1}]\phi(P_{2^n}) = [2^{n-1}]\phi(P_{2^n})^{\sigma}\), contradicting our assumption. Hence either \(a\) or \(b\) are divisible by \(2^n\), but if, for instance, \(a\) is divisible by \(2^n\), then \([b]\phi(P_{2^n})^{\sigma} = \OO\) and \(b\) is also divisible by \(2^n\). By symmetry, both \(a\) and \(b\) are divisible by \(2^n\), concluding the proof of the equality \eqref{eq: 2, L hat is of order 16, case p = 2}.
        \end{enumerate}
    \end{proof}

\section{Technical lemmas from group theory}\label{lemmas on group theory}

    As a final preparation for the proof of Theorem \ref{main Theorem}, in this section, we present two necessary lemmas on groups.

    \begin{lemma}\label{group theory lemma 1}
        If \(G\) is a group with normal subgroups \(H_1\) and \(H_2\) where \(H_1\) has order \(2\), \(G/H_1\) has exponent \(2\), and \(G/H_2 \cong \ZZ/4\ZZ\), then \(G\) is abelian.
    \end{lemma}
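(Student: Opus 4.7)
The plan is to analyze the commutator subgroup $[G,G]$ and show it must be trivial. Both quotient hypotheses force $[G,G]$ to be small: since $G/H_1$ has exponent $2$ it is abelian, so $[G,G] \subseteq H_1$; similarly $G/H_2 \cong \ZZ/4\ZZ$ is cyclic hence abelian, so $[G,G] \subseteq H_2$. Thus $[G,G] \subseteq H_1 \cap H_2$.

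Because $|H_1| = 2$, there are only two possibilities for $[G,G]$: either $[G,G]$ is trivial, in which case $G$ is abelian and we are done, or $[G,G] = H_1$. The plan is to rule out the second possibility by deriving a contradiction. In that case, $[G,G] = H_1 \subseteq H_2$, so $H_1$ sits inside $H_2$.

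I would then pick an element $g \in G$ whose image $gH_2$ generates $G/H_2 \cong \ZZ/4\ZZ$. Since this image has order $4$, we have $g^2 \notin H_2$. On the other hand, $g^2 H_1$ is trivial in $G/H_1$ by the exponent-$2$ hypothesis, so $g^2 \in H_1$; combined with $H_1 \subseteq H_2$ this gives $g^2 \in H_2$, contradicting the previous line. Hence the second case is impossible and $G$ is abelian.

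There is no real obstacle here; the argument is essentially a two-line case analysis once one observes the inclusion $[G,G] \subseteq H_1 \cap H_2$. The only subtlety is recognizing that $|H_1|=2$ forces a dichotomy for $[G,G]$, and that the $\ZZ/4\ZZ$ quotient provides an element whose square detects the nontrivial case.
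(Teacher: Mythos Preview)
Your proof is correct and follows essentially the same approach as the paper: both observe that $[G,G]\subseteq H_1\cap H_2$, reduce to the case $[G,G]=H_1\subseteq H_2$, and then derive a contradiction from the incompatibility between $G/H_1$ having exponent $2$ and $G/H_2\cong\ZZ/4\ZZ$. The only cosmetic difference is that the paper phrases the final contradiction via the third isomorphism theorem $(G/H_1)/(H_2/H_1)\cong G/H_2$, whereas you trace it through a generator $g$ of $G/H_2$.
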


    \begin{proof}
        Since both the quotient groups \(G/H_1\) and \(G/H_2\) are abelian, the commutator subgroup \([G,G]\) is contained in \(H_1 \cap H_2\), implying that \([G,G]\) has order 1 or 2. Now, if \([G,G]\) has order 2, that is, \( H_1 = [G,G] \subseteq H_2\), then, by the isomorphism theorem, we obtain
        \[
        (G/H_1)/(H_2/H_1) \cong G/H_2 \cong \ZZ/4\ZZ,
        \]
        which is not possible, since \(G/H_1\) only has points of order 2. Therefore, \([G,G] = \left\{e\right\}\) and \(G\) is abelian.
    \end{proof}

    \begin{lemma}\label{group theory lemma 2}
        If \(G\) is a finite group of order \(2^n\), with \(n \geq 2\), having normal subgroups \(H_1\) and \(H_2\) such that \(G/H_1 \cong (\ZZ/2\ZZ)^{\oplus (n-2)}\) and \(G/H_2 \cong \ZZ/4\ZZ\), respectively, then every subgroup of \(H_1\) is normal in \(G\).
    \end{lemma}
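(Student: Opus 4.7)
The plan is to prove the stronger statement that $H_1\subseteq Z(G)$, which immediately implies every subgroup of $H_1$ is normal in $G$. First I observe that $|H_1|=|G|/|G/H_1|=2^n/2^{n-2}=4$, so $H_1$ is either cyclic of order $4$ or the Klein four group. In the cyclic case every subgroup of $H_1$ is characteristic in $H_1$, hence normal in $G$ (since $H_1\trianglelefteq G$), and the conclusion is immediate. I therefore assume henceforth that $H_1\cong (\ZZ/2\ZZ)\oplus(\ZZ/2\ZZ)$.

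Next I would pin down $N:=H_1\cap H_2$. Because $G/H_1$ and $G/H_2$ are both abelian, $[G,G]\subseteq H_1\cap H_2 = N$. The Klein four hypothesis on $H_1$ rules out $N=\{e\}$ (which would give $G/H_2\cong H_1\cong(\ZZ/2\ZZ)\oplus(\ZZ/2\ZZ)$) and $H_1\subseteq H_2$ (which would make $G/H_2$ a quotient of the exponent-$2$ group $G/H_1$); both possibilities contradict $G/H_2\cong\ZZ/4\ZZ$. Hence $|N|=2$, and since $\aut(N)$ is trivial, the conjugation action of $G$ on $N$ is trivial, i.e.\ $N\subseteq Z(G)$. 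Now I choose $g\in G$ whose image generates $G/H_2$; since $G/H_1$ has exponent $2$ while $g^2H_2$ has order $2$, we get $g^2\in H_1\setminus N$, so $H_1=\langle N,g^2\rangle$. Clearly $g$ commutes with $g^2$ and with the central subgroup $N$, hence centralises all of $H_1$.

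The remaining step, which is the main obstacle, is to show that every $h_2\in H_2$ also centralises $H_1$; combined with the previous paragraph and the factorisation $G=\langle g\rangle H_2$ (which follows from $gH_2$ generating $G/H_2$), this yields $H_1\subseteq Z(G)$. The key observation is that $[h_2,g]\in[G,G]\subseteq N\subseteq Z(G)$, so the commutator identity $[a,bc]=[a,b]\cdot b\,[a,c]\,b^{-1}$ applied with $a=h_2$ and $b=c=g$ gives
\[
[h_2,g^2]\;=\;[h_2,g]\cdot g\,[h_2,g]\,g^{-1}\;=\;[h_2,g]^2\;=\;e,
\]
where the middle equality uses centrality of $[h_2,g]$ and the final equality uses $|N|=2$. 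Therefore $h_2$ commutes with $g^2$, and trivially with $N$, hence centralises $H_1$, completing the argument.
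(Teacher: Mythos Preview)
Your proof is correct and follows essentially the same strategy as the paper's: reduce to the Klein four case for $H_1$, observe that the commutator subgroup lies in a normal (hence central) subgroup of order $2$, and then use the identity $[x,y^2]=[x,y]^2=e$ to conclude $H_1\subseteq Z(G)$.

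The organizational details differ slightly. The paper writes $H_1=G^2\,[G,G]$ and argues separately that $[G,G]\subseteq Z(G)$ and $G^2\subseteq Z(G)$, the latter via a somewhat redundant three-case analysis. You instead pick a specific $g$ with $gH_2$ generating $G/H_2$, decompose $H_1=\langle N,g^2\rangle$ and $G=\langle g\rangle H_2$, and check directly that both $g$ and every $h_2\in H_2$ centralise $g^2$. Your route is a bit more concrete and avoids the case split; it also handles the abelian and non-abelian cases uniformly, whereas the paper first disposes of the abelian case. Both arguments ultimately rest on the same key computation $[h,g^2]=[h,g]^2=e$ coming from centrality of the order-$2$ commutator subgroup.
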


    \begin{proof}
        We need to prove this for a non-abelian group \(G\). By the same argument in the proof of the last lemma, it follows that \(H_1 \not\subseteq H_2\) and \([G,G] \subseteq H_1 \cap H_2\). Since \(G\) is non-abelian we conclude that \([G,G]\) has order 2. If \(H_1 \cong \ZZ/4\ZZ\), then its unique non-trivial subgroup is \([G,G]\) which we know that is a normal subgroup of \(G\). From now on, suppose \(H_1 \cong (\ZZ/2\ZZ) \oplus (\ZZ/2\ZZ)\). First, we observe that every element \(x \in G\) has order dividing 4. Indeed, since 
        \[
        G/H_1 \cong (\ZZ/2\ZZ)^{\oplus (n-2)},
        \]
        we have \(x^2 \in H_1\), which implies \(x^4 = e\). In particular, it follows that \(G^2\) is a subset of \(H_1\) and since we can prove, using the structure theorem for finite abelian groups, that 
        \begin{equation}\label{eq: commutator quotient}
        G/[G,G] \cong (\ZZ/2\ZZ)^{\oplus (n-3)} \oplus \ZZ/4\ZZ,
        \end{equation}
        we conclude that \(G^2 \neq [G,G]\), otherwise \(G^2\) would be a normal subgroup of \(G\) implying that \(G/G^2\) is a 2-elementary abelian group, contradicting the isomorphism \eqref{eq: commutator quotient}. Since \(G^2\) has order at least 2, we have \(H_1 = G^2[G,G]\).
        
        Finally, we claim that both \([G,G]\) and \(G^2\) are subsets of \(\Z(G)\), the center of \(G\), concluding that \(H_1 \subseteq \Z(G)\), and every subgroup of \(H_1\) is a normal subgroup of \(G\). Let \([G,G] = \{e,a\}\). Since \([G,G]\) is a normal subgroup of \(G\), for every element \(x \in G\) we have \(x^{-1}ax = a\), where it follows that \([G,G] \subseteq \Z(G)\). 
        
        Now, let \(y^2 \in G^2\). For every \(x \in G\) we have three possibilities: 
        \begin{enumerate}
            \item[(i)] If \([x,y] = e\), then \(x\) commutes with \(y\) and, in particular, with \(y^2\).
            \item[(ii)] If \([y^{-1},x] = e\), then \(x\) commutes with \(y^{-1}\) and, in particular, with \(y^{-2} = y^2\). 
            \item[(iii)] If neither (i) nor (ii) holds, that is, if \([x,y] \neq e\) and \([y^{-1},x] \neq e\), then, since \([G,G]\) has order 2,  
        \[
        [x,y] = [y^{-1},x].
        \]
        By some calculations, we get 
        \[
        [x,y^2] = [x,y]^2 = e,
        \]
        showing that \(x\) commutes with \(y^2\).
        \end{enumerate}
        Since \(y^2 \in G^2\) was chosen arbitrarily, we conclude that \(G^2 \subseteq \Z(G)\).
    \end{proof}

    As an application, we prove a useful property of quadratic extensions of the field \(\QQ(2^{\infty}) = \QQ(\{\sqrt{m} \colon m \in \ZZ\})\), which appears in the proof of Theorem \ref{main Theorem}.

    \begin{lemma}\label{group lemma galois theory}
         Let \(L\) be a quadratic extension of the field \(\QQ(2^{\infty})\), such that \(L/\QQ\) is not a Galois extension, and its Galois closure \(\widehat{L}\) satisfies \([\widehat{L}: \QQ(2^{\infty})] = 4\). Then, there is no intermediate field \(K\) of the extension \(\widehat{L}/\QQ\) such that \(K\) is a quartic cyclic extension of \(\QQ\).
    \end{lemma}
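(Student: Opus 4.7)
The plan is to assume such a $K$ exists and derive that $L/\QQ$ must be Galois, contradicting the hypothesis. The argument combines Kummer theory over $\QQ(2^\infty)$ with an application of Lemma \ref{group theory lemma 2} to a suitably chosen finite subextension.

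Let $F$ be the unique quadratic subfield of $K$. Since every quadratic extension of $\QQ$ lies in $\QQ(2^\infty)$, we have $F \subset \QQ(2^\infty)$; on the other hand $K \not\subset \QQ(2^\infty)$ because $\ZZ/4\ZZ$ is not a quotient of any elementary abelian $2$-group. Hence $M = K \cdot \QQ(2^\infty)$ is a quadratic extension of $\QQ(2^\infty)$ inside $\widehat{L}$, and $M/\QQ$ is Galois as a compositum of two Galois extensions. Let $L^\sigma$ denote a nontrivial Galois conjugate of $L$, which exists since $L/\QQ$ is not Galois. Then $L, L^\sigma, M$ are three pairwise distinct quadratic subextensions of $\widehat{L}/\QQ(2^\infty)$: $L \neq L^\sigma$ by choice, and $M$ differs from both because $M/\QQ$ is Galois whereas $L, L^\sigma$ are not (using that $L^\sigma/\QQ$ is Galois if and only if $L/\QQ$ is, since $\gal(\widehat{L}/L^\sigma)$ and $\gal(\widehat{L}/L)$ are conjugate subgroups). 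Therefore $\gal(\widehat{L}/\QQ(2^\infty)) \cong (\ZZ/2\ZZ)^2$, and by Kummer theory I may write $L = \QQ(2^\infty)(\sqrt{\delta})$, $L^\sigma = \QQ(2^\infty)(\sqrt{\delta^\sigma})$, $M = \QQ(2^\infty)(\sqrt{\gamma})$ with $K = F(\sqrt{\gamma})$, together with a relation $\delta\delta^\sigma = \gamma\eta^2$ for some $\eta \in \QQ(2^\infty)$.

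Next I pass to a finite Galois subextension in order to apply Lemma \ref{group theory lemma 2}. Choose a finite Galois subextension $F^* \subset \QQ(2^\infty)$ of $\QQ$ (automatically elementary abelian $2$) containing $F, \delta, \gamma, \eta$, and, for each $\rho$ in the finite $\QQ$-conjugacy class of $\delta$, an auxiliary element $\xi_\rho \in \QQ(2^\infty)$ with $\delta^\rho/\delta = \xi_\rho^2$ or $\delta^\rho/\delta^\sigma = \xi_\rho^2$; such $\xi_\rho$ exists in $\QQ(2^\infty)$ because $L^\rho$ is a conjugate of $L$, hence equals $L$ or $L^\sigma$. Let $N = F^*(\sqrt{\delta}, \sqrt{\gamma}) \subset \widehat{L}$. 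Every Galois conjugate $\sqrt{\delta^\rho}$ then equals $\pm \xi_\rho \sqrt{\delta}$ or $\pm \xi_\rho \sqrt{\delta^\sigma}$, and $\sqrt{\delta^\sigma} = \pm\eta\sqrt{\gamma}/\sqrt{\delta} \in N$ via the relation $\delta\delta^\sigma = \gamma\eta^2$, so $N/\QQ$ is Galois. Pairwise distinctness of $L, L^\sigma, M$ as quadratic extensions of $\QQ(2^\infty)$ forces $\delta, \gamma, \delta\gamma$ to be pairwise non-square in $F^*$, so $[N:F^*] = 4$ and $|\gal(N/\QQ)| = 2^n$ for some $n \geq 3$.

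Now Lemma \ref{group theory lemma 2} applies to $\widetilde{G} = \gal(N/\QQ)$ with $\widetilde{H}_1 = \gal(N/F^*)$ (of order $4$, quotient $(\ZZ/2\ZZ)^{n-2}$) and $\widetilde{H}_2 = \gal(N/K)$ (quotient $\ZZ/4\ZZ$). It yields that every subgroup of $\widetilde{H}_1$ is normal in $\widetilde{G}$; in particular the order-$2$ subgroup $\gal(N/F^*(\sqrt{\delta}))$ is normal, so $F^*(\sqrt{\delta})/\QQ$ is Galois. The family of $F^*$ satisfying the above conditions is cofinal in the directed set of finite subextensions of $\QQ(2^\infty)/\QQ$, so $L = \bigcup_{F^*} F^*(\sqrt{\delta})$ is a directed union of Galois extensions of $\QQ$, hence itself Galois over $\QQ$, contradicting the hypothesis. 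The main obstacle is precisely arranging $N/\QQ$ to be Galois: a priori the $\QQ$-conjugates of $\sqrt{\delta}$ need not lie in $N$, but the biquadratic structure of $\widehat{L}/\QQ(2^\infty)$ restricts them to only two classes modulo $\QQ(2^\infty)^{\times 2}$, so finitely many auxiliary elements $\xi_\rho$ suffice.
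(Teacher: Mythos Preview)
Your proof is correct and follows essentially the same strategy as the paper's: descend to a finite Galois $2$-extension of $\QQ$ containing $K$, with degree $4$ over a finite elementary-abelian piece of $\QQ(2^\infty)$, and then apply Lemma~\ref{group theory lemma 2} to force the intermediate field corresponding to $L$ to be Galois over $\QQ$. The paper implements the descent with primitive elements and coordinate expansions rather than your explicit Kummer generators $\sqrt{\delta},\sqrt{\gamma}$ and the relation $\delta\delta^\sigma=\gamma\eta^2$, but the architecture is identical; your final cofinal-union step can in fact be shortened, since a single suitable $F^*$ already yields $L=\QQ(2^\infty)\cdot F^*(\sqrt{\delta})$ as a compositum of Galois extensions of $\QQ$.
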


    \begin{proof}
    Suppose there exists a quartic cyclic extension \(K\) of \(\QQ\), contained in \(\widehat{L}\). Let \(\alpha , \beta, \gamma \in \widehat{L}\) be such that \(\widehat{L} = \QQ(2^{\infty})(\alpha)\), \(L = \QQ(2^\infty)(\beta)\) and \(K = \QQ(\gamma)\), respectively. Let \(\alpha^{(i)}\) (\(i \in I\), \(I\) is a finite set) be the conjugates of \(\alpha\) over \(\QQ\) and let
    \begin{align*}
         \alpha^{(i)} &= a_0^{(i)} + a_1^{(i)}\alpha + a_2^{(i)}\alpha^2 + a_3^{(i)}\alpha^3\;\;(a_0^{(i)}, a_1^{(i)}, a_2^{(i)}, a_3^{(i)} \in \QQ(2^{\infty}),\; i \in I)\\
        \beta &=  b_0 + b_1\alpha + b_2\alpha^2 + b_3\alpha^3\;\;(b_0, b_1, b_2, b_3 \in \QQ(2^{\infty}))\\
        \gamma &=  c_0 + c_1\alpha + c_2\alpha^2 + c_3\alpha^3\;\;(c_0, c_1, c_2, c_3 \in \QQ(2^{\infty}))\\
        P(x) &= x^4 + d_3x^3 + d_2x^2 + d_1x + d_0\;\;(d_0, d_1, d_2, d_3 \in \QQ(2^{\infty}))
    \end{align*}
    be, respectively, the representation of \(\alpha^{(i)}\), \(\beta\) and \(\gamma\) in the \(\QQ(2^{\infty})\)-basis \(1, \alpha, \alpha^2, \alpha^3\) of \(\widehat{L}\), and the minimal polynomial of \(\alpha\) over \(\QQ(2^{\infty})\). Consider the field 
    \[F \coloneq \QQ(a_j^{(i)}, b_j, c_j, d_j| i \in I, 0 \leq j \leq 3)\] 
     and its extension \(\widetilde{L} \coloneq F(\alpha)\). By construction, both \(F\) and \(\widetilde{L}\) are Galois extensions of \(\QQ\). Since \(F \subseteq \QQ(2^{\infty})\) and \(\beta \in \widetilde{L}\), it follows that \([\widetilde{L}:F] = 4\) and \(K \subseteq \widetilde{L}\). Applying Lemma \ref{group theory lemma 2} with \(G = \gal(\widetilde{L}/\QQ)\), \(H_1 = \gal(\widetilde{L}/F)\) and \(H_2 = \gal(\widetilde{L}/K)\), we conclude that every intermediate field of the extension \(\widetilde{L}/F\) is Galois over \(\QQ\). In particular, \(F(\beta)/\QQ\) and hence \(L/\QQ\) are Galois extensions, contradicting the assumption in the statement.
    \end{proof}

\section{Proof of the main theorem}\label{section main}

    First, we  establish the notation used throughout this section.
    
    \begin{notation}
    In this section, unless otherwise stated, \(E\)  denotes an elliptic curve defined over the field \(\QQ(2^\infty) = \mathbb{Q}(\{\sqrt{m} \colon m \in \mathbb{Z}\})\), with \(j(E) \in \mathbb{Q} \setminus \{0, 1728\}\). The elliptic curve \(E' = E^D\) is defined by the formula in \eqref{eq: rational quadratic twist}, and \(\phi\) denotes the isomorphism between \(E\) and \(E'\) as specified in \eqref{eq: isomorphism}, both of which are introduced at the beginning of Section \ref{lemmas on elliptic curves}. The isomorphism \(\phi\) is defined over the field \(L = \QQ(2^\infty)(\sqrt{D})\) and \(\widehat{L}\) is the Galois closure of \(L\) over \(\QQ\).
    \end{notation}

    The proof of Theorem \ref{main Theorem} is organized as follows: In Subsections \ref{The case p = 2} through \ref{The case p = 13}, we analyze the \(p\)-primary torsion subgroups \(E(\QQ(2^\infty))[p^{\infty}]\) for each prime \(p\). Then, in Subsection \ref{Miscellaneous cases}, we examine the compatibility of the \(p\)-primary torsion subgroups identified in the previous subsections.

    For the first part, the following proposition establishes that it suffices to consider only the primes 2, 3, 5, 7, and 13.

    \begin{proposition}\label{E[p] list}
        Let \(p\) be a prime number. If \(E(\QQ(2^\infty))\) has a point of order \(p\), then \(p \leq 7\) or \(p = 13\).
    \end{proposition}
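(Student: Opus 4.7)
The plan is to reduce to the rational model $E'$ and exploit that $L=\QQ(2^\infty)(\sqrt D)$ sits inside a pro-$2$ Galois extension of $\QQ$, so the field of definition of $\phi(P_p)$ over $\QQ$ has $2$-power degree; combined with existing classifications of torsion fields of elliptic curves over $\QQ$, this will close the proof. Concretely, given $P_p\in E(\QQ(2^\infty))$ of order $p$, the isomorphism $\phi$ produces $\phi(P_p)=(x_PD,\,y_PD\sqrt D)\in E'(L)$, a point of order $p$ on the elliptic curve $E'/\QQ$.

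First I would show that $[\QQ(\phi(P_p)):\QQ]$ is a power of $2$. Writing $D^{(1)},\dots,D^{(k)}$ for the finitely many $\QQ$-conjugates of $D$, the Galois closure of $L/\QQ$ is $\widehat L=\QQ(2^\infty)\bigl(\sqrt{D^{(1)}},\dots,\sqrt{D^{(k)}}\bigr)$, and $\gal(\widehat L/\QQ)$ is a pro-$2$ group: it is an extension of $\gal(\QQ(2^\infty)/\QQ)$ (of exponent $2$) by the finite $2$-elementary abelian group $\gal(\widehat L/\QQ(2^\infty))$. Hence every finite sub-extension of $\widehat L/\QQ$, in particular $\QQ(\phi(P_p))$, has $2$-power degree; moreover every $g\in\gal(\widehat L/\QQ)$ satisfies $g^4=1$, so cyclic sub-extensions of $\widehat L/\QQ$ have degree at most $4$, sharpened to $\leq 2$ in the non-Galois case by Lemma \ref{group lemma galois theory}.

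Combining this with Theorem \ref{exact_degree} immediately eliminates $p\in\{11,37\}$, since no admissible degree in $\{5,10,20,40,55,80,100,110,120\}$ or $\{12,36,72,444,1296,1332,1368\}$ is a power of $2$. For $p=17$ the $2$-power admissible degrees are $\{8,16,32,256\}$, so I would analyse the $\gal_\QQ$-orbit of the line $\langle\phi(P_{17})\rangle\subset E'[17]$. Classifying the image of the mod-$17$ representation of $E'$ among the maximal subgroups of $\GL_2(\mathbb F_{17})$ (Borel, normalizers of split/non-split Cartans, and the exceptional ones) and imposing that this orbit has $2$-power size forces $\phi(P_{17})$ to lie in a cyclic $2$-extension of $\QQ$ (or of a quadratic extension thereof) of degree at most $4$, contradicting the lower bound $[\QQ(\phi(P_{17})):\QQ]\geq 8$ from Theorem \ref{exact_degree} (with residual small-degree cases closed by the absence of $17$-torsion on elliptic curves over $\QQ$ or over quadratic fields via Mazur and Najman).

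For $p\geq 19$ I would invoke Theorem \ref{isogeny_list}. When $p\notin\{19,37,43,67,163\}$, $E'$ admits no $p$-isogeny over $\QQ$, and the same inspection of maximal subgroups of $\GL_2(\mathbb F_p)$ shows that the $\gal_\QQ$-orbit of any non-zero $p$-torsion vector cannot be a power of $2$, contradicting the previous step. For the sporadic primes $p\in\{19,43,67,163\}$ the $p=17$ argument applies essentially verbatim, reducing the problem to cyclic $2$-extensions of $\QQ$ of degree at most $4$, which are known not to support $p$-torsion for any elliptic curve over $\QQ$. The main obstacle is the uniform orbit/cyclic-subfield analysis for $p=17$ and the four sporadic primes, where the bound on cyclic sub-extensions of $\widehat L$ from Lemma \ref{group lemma galois theory} must be married to detailed mod-$p$ representation theory of elliptic curves over $\QQ$.
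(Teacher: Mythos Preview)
Your opening observations are correct and useful: $\gal(\widehat{L}/\QQ)$ is indeed a pro-$2$ group of exponent dividing $4$, so $[\QQ(\phi(P_p)):\QQ]$ is a $2$-power, and this together with Theorem~\ref{exact_degree} cleanly disposes of $p=11$ and $p=37$. But the rest of the argument has a genuine gap.

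For $p=17$ and for all $p\geq 19$ you defer to an unspecified case analysis of the mod-$p$ image of $E'$ inside $\GL_2(\mathbb{F}_p)$. This is not a proof, and carrying it out is nontrivial. For instance, when $p=17$ the normaliser of a split Cartan has order $2(p-1)^2=512$, a $2$-group, and the $\gal_\QQ$-orbit of a vector on an eigenline then has size $32$, which is a perfectly good $2$-power; to exclude this you would have to argue further that the Galois closure of $\QQ(\phi(P_{17}))$ contains elements of order $16$, violating the exponent-$4$ bound on $\widehat{L}$. That works, but it is exactly the sort of case-by-case check you have not written, and for the primes $p\geq 19$ with no rational $p$-isogeny your claim that ``the $\gal_\QQ$-orbit of any non-zero $p$-torsion vector cannot be a power of $2$'' would require running through all possible mod-$p$ images (Borel excluded, Cartan normalisers, exceptional, surjective), which for general $p$ touches on deep results. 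Also, your appeal to Lemma~\ref{group lemma galois theory} is misplaced: that lemma assumes $[\widehat{L}:\QQ(2^\infty)]=4$, whereas your $\widehat{L}=\QQ(2^\infty)(\sqrt{D^{(1)}},\dots,\sqrt{D^{(k)}})$ may well be larger.

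The paper avoids all of this with a much shorter device. Working over a finite $K\subset\QQ(2^\infty)$, it first shows $L/\QQ$ is Galois: otherwise Lemma~\ref{L hat is of order 16} gives $E'[p]\subset E'(\widehat{L})$ with $[\widehat{L}:K]=4$, and then the Weil pairing (Lemma~\ref{cn+cn_contained}) forces $\QQ(\zeta_p)\subset\widehat{L}$, impossible for $p\geq 11$ by a degree count. The same cyclotomic bound shows $E'(L)[p]$ is cyclic, hence $\gal(L/\QQ)$-stable, so $E'$ has a rational $p$-isogeny and Theorem~\ref{isogeny_list} restricts $p$ to $\{11,13,17,19,37,43,67,163\}$ in one stroke. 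Now Lemma~\ref{degree of definition isogeny} makes $\QQ(\phi(P))/\QQ$ \emph{cyclic} of degree dividing $p-1$, hence of degree at most $4$ by your own exponent-$4$ observation; this immediately contradicts Theorem~\ref{exact_degree} for $p\in\{17,37\}$ and forces $L=\QQ(2^\infty)$ (so Theorem~\ref{fujita_list} applies) for $p\in\{11,19,43,67,163\}$. The missing idea in your sketch is precisely this two-line route to the $p$-isogeny via the Weil pairing, which replaces the entire mod-$p$ image analysis.
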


    To prove this proposition, we  need the following consequence of the Weil pairing.

     \begin{lemma}\label{cn+cn_contained}
      Let \(E\) be an elliptic curve defined over a field \(K\) of characteristic zero. If \(E(K)_{\tors}\) contains \(E[n]\), then \(K\) contains \(\QQ(\zeta_n)\). In particular, if the extension \(K/\QQ\) is finite, then \(\varphi(n)\) divides \([K:\QQ]\).
  \end{lemma}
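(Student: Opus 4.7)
The plan is to invoke the Weil pairing. Recall that for any elliptic curve \(E\) over a field \(K\) of characteristic zero and any positive integer \(n\), there is a bilinear, alternating, non-degenerate and \(\gal(\overline{K}/K)\)-equivariant pairing
\[
e_n \colon E[n] \times E[n] \longrightarrow \mu_n,
\]
where \(\mu_n \subset \overline{K}^{\times}\) denotes the group of \(n\)-th roots of unity. My first step is to exploit the hypothesis \(E[n] \subseteq E(K)_{\tors}\): it says every \(P \in E[n]\) is fixed by \(G := \gal(\overline{K}/K)\). Combined with Galois-equivariance, this yields \(e_n(P, Q)^{\sigma} = e_n(P^{\sigma}, Q^{\sigma}) = e_n(P, Q)\) for all \(\sigma \in G\) and all \(P, Q \in E[n]\), so every value taken by \(e_n\) lies in \(K\).

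Next, I would upgrade this to the inclusion \(\mu_n \subseteq K\) by using non-degeneracy. Since \(E[n] \cong (\ZZ/n\ZZ) \oplus (\ZZ/n\ZZ)\), a standard argument shows that \(e_n\) is surjective onto \(\mu_n\): if its image were contained in a proper subgroup \(\mu_d\) with \(d \mid n\) and \(d < n\), then by bilinearity \(e_n([d]P, Q) = e_n(P, Q)^d = 1\) for all \(P, Q \in E[n]\), forcing \([d]P = \OO\) for every \(P \in E[n]\) by non-degeneracy---but \(E[n]\) contains points of order exactly \(n\), a contradiction. Hence there exist \(P, Q \in E[n]\) with \(\zeta := e_n(P, Q)\) a primitive \(n\)-th root of unity, and by the previous paragraph \(\zeta \in K\), so \(\QQ(\zeta_n) \subseteq K\).

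The ``in particular'' claim is then immediate: when \([K : \QQ]\) is finite, the tower law together with \([\QQ(\zeta_n) : \QQ] = \varphi(n)\) forces \(\varphi(n) \mid [K : \QQ]\). There is essentially no obstacle in this argument---it is a textbook application of the Weil pairing---and the only point worth flagging is the surjectivity of \(e_n\) onto \(\mu_n\), which can either be cited from a standard reference (for instance Silverman, \emph{The Arithmetic of Elliptic Curves}, Chapter III) or verified directly as above.
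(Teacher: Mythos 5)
Your proposal is correct and follows exactly the paper's argument: both use the surjectivity of the Weil pairing to produce points $S,T\in E[n]\subseteq E(K)$ with $e_n(S,T)$ a primitive $n$-th root of unity, and then Galois equivariance to conclude this root of unity lies in $K$. The only difference is that you spell out the surjectivity of $e_n$ onto $\mu_n$, which the paper takes as known.
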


    \begin{proof}
    Consider the Weil pairing:
    \begin{equation*}
            e_n \colon E[n] \times E[n] \longrightarrow \mu_n.
    \end{equation*}
     Since \(E(K)\) contains \(E[n]\), there exist points \(S,T \in E(K)\) such that \(e_n(S,T)\) is a primitive \(n\)-th root of unity. By the Galois invariance of \(e_n\), we conclude that \(e_n(S,T)\) belongs to \(K\). Therefore, the field \(\QQ(\zeta_n)\) is contained in \(K\).
    \end{proof}

    \begin{proof}[Proof of Proposition \ref{E[p] list}]
        Suppose \(E(\QQ(2^\infty))\) has a point \(P\) of order \(p \geq 11\). Let \(K\) be the smallest subfield of \(\QQ(2^\infty)\) containing \(\QQ\), the coefficients of \(E\) and the coordinates of \(P\). Since \(K\) is a finite extension of \(\QQ\) contained in \(\QQ(2^\infty)\), \(\gal(K/\QQ)\) is isomorphic to \((\ZZ/2\ZZ)^{r}\) for some positive integer \(r\). In this proof, consider \(E\) to be defined over \(K\), \(L = K(\sqrt{D})\), and the isomorphism \(\phi: E \to E'\) be defined over \(L\).

         We claim that \(L/\QQ\) is a Galois extension. Indeed, if the extension \(L/\QQ\) is not Galois, then Lemma \ref{L hat is of order 16} implies that \([\widehat{L}:K] = 4\) and \(E'[p] \subseteq E'(\widehat{L})\). From Lemma \ref{cn+cn_contained}, we have \(\QQ(\zeta_p) \subseteq \widehat{L}\). Since \(\QQ(\zeta_p)/\QQ\) is a cyclic extension of order \(p-1\), the intersection \(\QQ(\zeta_p) \cap K\) is either \(\QQ\) or a quadratic extension of \(\QQ\). This implies that the extension \(K\QQ(\zeta_p)/\QQ\) has degree \(2^r(p-1)\) or \(2^{r-1}(p-1)\), respectively. Since \(p \geq 11\), we get a contradiction as 
         \begin{equation}\label{eq: ineq for contradiction}
             [K\QQ(\zeta_p):\QQ] \geq 2^{r-1}(p-1) \geq 2^{r-1}\cdot 10 > 2^{r+2} = [\widehat{L}:\QQ].
         \end{equation}
         
        Now, we claim that \(E'\) has a \(p\)-isogeny. To prove this, we observe that \(E'[p] \not\subseteq E'(L)\). Indeed, if \(E'[p] \subseteq E'(L)\), then by Lemma \ref{cn+cn_contained}, we would have \(\QQ(\zeta_p) \subseteq L\), which is a cyclic extension of \(\QQ\) of degree \(p-1 \geq 10\), leading to a contradiction similar to the inequality \eqref{eq: ineq for contradiction}. Hence, \(E'(L)[p] \cong \ZZ/p\ZZ\), and by Lemma \ref{cyclic_isogeny_when_cn+cnm}, \(E'\) has a \(p\)-isogeny.

        Next, examining the list in Theorem \ref{isogeny_list}, we find that \(p\) must be equal to either 11, 13, 17, 19, 37, 43, 67, or 163. By Lemma \ref{degree of definition isogeny}, the extension \(\QQ(\phi(P))/\QQ\) is cyclic of degree dividing \(p-1\). To avoid a contradiction similar to the one in \eqref{eq: ineq for contradiction}, the degree \([\QQ(\phi(P)):\QQ]\) must be equal to either \(1, 2,\) or \(4\).

        If \(p \in \{11, 19, 43, 67\}\), then \(p-1\) is not divisible by 4, implying that \(\QQ(\phi(P)) \subseteq \QQ(2^\infty)\). By Lemma \ref{remark about field of definition when twisting}, we have an isomorphism \(E(\QQ(2^\infty))_{\tors} \cong E'(\QQ(2^\infty))_{\tors}\). However, Theorem \ref{fujita_list} states that \(E'(\QQ(2^\infty))_{\tors}\) contains no points of order 11, 19, 43, or 67, leading to a contradiction.

        If \(p \in \{17, 37\}\), then Theorem \ref{exact_degree} implies that \([\QQ(\phi(P)):\QQ] \geq 8\). Thus, the only remaining possibility is \(p = 13\).
    \end{proof}

    \begin{remark}\label{remark of E[p] list}
    Note that, as proved in the preceding proof, if \(E(\mathbb{Q}(2^\infty))\) has a point of order 13, then \(E'(\QQ(2^\infty))[13] = \{\mathcal{O}\}\), \(L/\mathbb{Q}\) is a Galois extension, and \(E'\) has a 13-isogeny. This fact will be used in the proof of Lemma \ref{13 L abelian}.

    \end{remark}

    Now, we study \(E(\QQ(2^\infty))[p^{\infty}]\) when \(p\) is 2, 3, 5, 7 or 13.

\subsection{The 2-primary part of \(E(\QQ(2^\infty))_{\tors}\)}\label{The case p = 2}\hfill\\

    In this subsection, we  classify the possibilities for \(E(\QQ(2^\infty))[2^{\infty}]\) up to isomorphism. More specifically, we  prove the following proposition:

    \begin{proposition}\label{classification 2-elem torsion}
        \(E(\QQ(2^\infty))[2^{\infty}]\) is isomorphic to a subgroup of \((\ZZ/8\ZZ)\oplus(\ZZ/8\ZZ)\) or is isomorphic to \((\ZZ/4\ZZ)\oplus(\ZZ/16\ZZ)\).
    \end{proposition}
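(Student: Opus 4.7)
The plan is to proceed by cases depending on whether $\sqrt{D}\in\QQ(2^\infty)$.

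First, if $\sqrt{D}\in\QQ(2^\infty)$, then $L=\QQ(2^\infty)$ and $\phi$ is defined over $\QQ(2^\infty)$, so $E(\QQ(2^\infty))[2^\infty]\cong E'(\QQ(2^\infty))[2^\infty]$. Since $E'$ is defined over $\QQ$, Theorem \ref{fujita_list} applied to $E'$ shows that the $2$-primary part of $E'(\QQ(2^\infty))_{\tors}$ lies in the list $\{\mathcal{O}\},\ \ZZ/2\ZZ,\ (\ZZ/2\ZZ)^{\oplus 2},\ \ZZ/2\ZZ\oplus\ZZ/4\ZZ,\ \ZZ/2\ZZ\oplus\ZZ/8\ZZ,\ (\ZZ/4\ZZ)^{\oplus 2},\ \ZZ/4\ZZ\oplus\ZZ/8\ZZ,\ \ZZ/4\ZZ\oplus\ZZ/16\ZZ,\ (\ZZ/8\ZZ)^{\oplus 2}$, each of which either embeds into $(\ZZ/8\ZZ)^{\oplus 2}$ or equals $\ZZ/4\ZZ\oplus\ZZ/16\ZZ$.

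From now on assume $\sqrt{D}\notin\QQ(2^\infty)$, so $L/K$ is a proper quadratic extension (with $K:=\QQ(2^\infty)$), and argue by contradiction. A short case analysis on finite abelian $2$-groups of rank at most two shows that the negation of the desired conclusion forces $E(K)[2^\infty]$ to contain a subgroup isomorphic to $\ZZ/32\ZZ$ or to $\ZZ/8\ZZ\oplus\ZZ/16\ZZ$; in either case there is a point $P\in E(K)$ of order $2^n$ with $n\ge 4$. By Lemma \ref{remark about field of definition when twisting}, $\phi(P)\notin E'(K)$, and therefore $L=K(\phi(P))$. We then split according to the nature of $L/\QQ$.

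If $L/\QQ$ is abelian, then $L\subseteq\QQ^{\ab}$, so $\phi(P)\in E'(\QQ^{\ab})$ is a point of order at least $16$; inspection of Theorem \ref{chou_list_maximalabelian} shows that $E'(\QQ^{\ab})[2^\infty]$ contains no element of order $32$ and no copy of $\ZZ/8\ZZ\oplus\ZZ/16\ZZ$, a contradiction. If $L/\QQ$ is Galois but not abelian, take a finite Galois subextension $L_0\subset L$ with $L_0 K=L$; the Galois group $\gal(L_0/\QQ)$ is a finite $2$-group with commutator subgroup of order $2$ and abelianization of exponent $2$, which is expected to force $L\subseteq\QQ(\D_4^\infty)$, and Theorem \ref{Daniels_list} combined with the twist sequence of Theorem \ref{twist_torsion} then yields a contradiction. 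If $L/\QQ$ is not Galois and $E'(K)[2^\infty]\subseteq(\ZZ/2\ZZ)^{\oplus 2}$, then Lemma \ref{L hat is of order 16 case p = 2} produces $[\widehat{L}:K]=4$ and $(\ZZ/2^{n-1}\ZZ)^{\oplus 2}\subseteq E'(\widehat{L})$; by Lemma \ref{cn+cn_contained}, $\QQ(\zeta_{2^{n-1}})\subseteq\widehat{L}$, and for $n\ge 5$ the isomorphism $(\ZZ/2^{n-1}\ZZ)^{\times}\cong\ZZ/2\ZZ\times\ZZ/2^{n-3}\ZZ$ produces a quartic cyclic subextension of $\QQ$ inside $\widehat{L}$, contradicting Lemma \ref{group lemma galois theory}.

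The main obstacles will be two residual configurations. First, in the non-Galois case with $n=4$ (so $E(K)\supseteq\ZZ/8\ZZ\oplus\ZZ/16\ZZ$ but no order-$32$ point is guaranteed), only $\QQ(\zeta_8)\subseteq\widehat{L}$ is forced, which is biquadratic over $\QQ$ and does not contradict Lemma \ref{group lemma galois theory}; extracting the needed contradiction here requires using Theorem \ref{twist_torsion} to upgrade the $2$-primary information on $E'(\widehat{L})$ beyond what the lemma directly yields. Second, when both $E(K)[2^\infty]$ and $E'(K)[2^\infty]$ properly contain $(\ZZ/2\ZZ)^{\oplus 2}$, neither orientation of Lemma \ref{L hat is of order 16 case p = 2} applies; ruling out this mixed configuration, together with excluding non-dihedral $2$-group extensions (such as $Q_8$-type extensions) in the Galois non-abelian sub-case, will require combining Fujita's list for $E'(K)[2^\infty]$ with the twist exact sequence to pin down the structure tightly enough to invoke the classifications of Theorems \ref{chou_list_maximalabelian} and \ref{Daniels_list}.
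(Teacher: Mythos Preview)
Your proposal is explicitly incomplete: the final paragraph lists two configurations you cannot handle (the $n=4$ non-Galois case, and the case where $E'(K)[2^\infty]$ strictly contains $(\ZZ/2\ZZ)^{\oplus2}$), and your Galois--non-abelian branch rests on an unproved assertion (``expected to force $L\subseteq\QQ(\D_4^\infty)$'') that you yourself note does not obviously exclude $Q_8$-type extensions. These are not minor loose ends; they are precisely the heart of the matter.

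The paper avoids all of these obstacles by a completely different device that you do not use: Knapp's explicit halving formula (Theorem~\ref{propknapp}). Writing $E'$ as $y^2=x(x-(a+b\sqrt n))(x-(a-b\sqrt n))$ when $E'(\QQ)[2]\cong\ZZ/2\ZZ$, the paper computes directly (Lemma~\ref{c2 + c4 L galois}) that any point of order $4$ in $E(\QQ(2^\infty))$ forces $L$ to equal one of $\QQ(2^\infty)(\sqrt{\sqrt n})$, $\QQ(2^\infty)(\sqrt{a+b\sqrt n})$, or $\QQ(2^\infty)(\sqrt{a-b\sqrt n})$; each such field lies in $\QQ(\D_4^\infty)$ unconditionally, with no case split on whether $L/\QQ$ is Galois or abelian. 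This single concrete computation replaces your entire trichotomy. Then (Lemma~\ref{c8 + c8 L=F}) a second application of Knapp's formula, iterated to the $8$-torsion level, shows by an elementary nested-radical argument that $(\ZZ/8\ZZ)^{\oplus2}\subseteq E(\QQ(2^\infty))$ forces $L=\QQ(2^\infty)$ outright. Combining these with Daniels' list and Fujita's list leaves only the exclusion of a point of order $32$, which the paper handles (Lemma~\ref{no c32}) by an argument close to your $\zeta_{16}$/Lemma~\ref{group lemma galois theory} idea in the non-Galois case, together with an abelian reduction in the Galois case.

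In short, your abstract group-theoretic strategy of classifying $\gal(L/\QQ)$ runs into genuine difficulties that the paper never encounters, because the paper instead pins down $L$ explicitly via $2$-division polynomials.
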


    The proof will be established through several lemmas. Before proving the necessary lemmas, we  recall some properties of 2-torsion points and introduce some notation. As stated in Lemma \ref{remark about field of definition when twisting}, points of order 2 have the following property: \(P_2 \in E(\QQ(2^\infty))[2]\) if and only if   \(\phi(P_2) \in E'(\QQ(2^\infty))[2]\). Together with the definition of \(\QQ(2^\infty)\), this yields the following unique property of 2-torsion points.

    \begin{lemma}\label{lemma: full 2-torsion}
        If \(E(\QQ(2^\infty))[2] \neq \{\mathcal{O}\}\), then \(E(\QQ(2^\infty))[2] = E[2]\) and \(E'(\QQ)[2] \neq \{\mathcal{O}\}\). 
    \end{lemma}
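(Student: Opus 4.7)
The plan is to transfer the 2-torsion question from $E$ to its $\QQ$-twist $E'$ via $\phi$, use a degree argument to force rationality of a 2-torsion $x$-coordinate of $E'$, and finally use the factorization of the resulting cubic over $\QQ$ to obtain the full 2-torsion.

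First I would take a non-trivial point $P_2 = (x_0, 0) \in E(\QQ(2^\infty))[2]$. Applying the isomorphism $\phi$ from \eqref{eq: isomorphism}, its image $\phi(P_2) = (x_0 D, 0)$ lies in $E'(L)$, but by Lemma \ref{remark about field of definition when twisting} (applied with $n = 2$), $\phi(P_2)$ in fact belongs to $E'(\QQ(2^\infty))$. Since $E'$ is defined over $\QQ$, the element $x_0 D \in \QQ(2^\infty)$ is a root of the cubic
\[
f(x) = x^3 + A' x + B' \in \QQ[x].
\]

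The key step is a degree argument. Every element of $\QQ(2^\infty) = \QQ\bigl(\{\sqrt{m} : m \in \ZZ\}\bigr)$ lies in a finite subextension $F/\QQ$ with $\gal(F/\QQ) \cong (\ZZ/2\ZZ)^r$, so $[\QQ(\alpha) : \QQ]$ is a power of $2$ for every $\alpha \in \QQ(2^\infty)$. In particular, $[\QQ(x_0 D) : \QQ]$ is a power of $2$, but it also divides $\deg f = 3$, forcing $[\QQ(x_0 D) : \QQ] = 1$. Thus $x_0 D \in \QQ$, which gives $(x_0 D, 0) \in E'(\QQ)[2]$, proving the second assertion.

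For the first assertion, since $f$ has the rational root $x_0 D$, it factors as $f(x) = (x - x_0 D)(x^2 + bx + c)$ with $b, c \in \QQ$. The remaining two roots of $f$ lie in the (at most) quadratic extension $\QQ(\sqrt{b^2 - 4c})$, which is contained in $\QQ(2^\infty)$. Hence all three 2-torsion $x$-coordinates of $E'$ are in $\QQ(2^\infty)$, i.e., $E'[2] \subseteq E'(\QQ(2^\infty))$. Finally, $D = AB'/(BA') \in \QQ(2^\infty)$ since $A, B \in \QQ(2^\infty)$ and $A', B' \in \QQ$, so applying $\phi^{-1}$ (which sends $(u, 0) \mapsto (u/D, 0)$ on 2-torsion) transports $E'[2] \subseteq E'(\QQ(2^\infty))$ back to $E[2] \subseteq E(\QQ(2^\infty))$, as desired. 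No step here is expected to be a serious obstacle; the only subtle point is the elementary degree argument ruling out an irreducible cubic factor of $f$ over $\QQ$.
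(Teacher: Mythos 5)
Your proposal follows the paper's proof almost verbatim: transfer the $2$-torsion to $E'$ via Lemma \ref{remark about field of definition when twisting}, observe that $\QQ(2^\infty)$ contains no cubic subfield so the cubic $x^3+A'x+B'$ cannot be irreducible over $\QQ$, and conclude that it has a rational root with the remaining roots in a quadratic (hence $\QQ(2^\infty)$-contained) extension. However, one step is misstated. You claim that $[\QQ(x_0D):\QQ]$ ``divides $\deg f = 3$,'' hence equals $1$, hence $x_0D \in \QQ$. The degree of $x_0D$ over $\QQ$ is the degree of its minimal polynomial, which is an \emph{irreducible factor} of $f$ and can therefore be $1$, $2$, or $3$; it need not divide $3$ (compare $x(x^2-2)$ and the root $\sqrt{2}$). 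The correct deduction is: this degree is a power of $2$ and at most $3$, so it is $1$ or $2$; in either case $f$ is reducible over $\QQ$, and a reducible cubic has a rational root. That rational root need not be $x_0D$ itself, so the specific point $\phi(P_2)$ you started with may fail to be $\QQ$-rational — but some $2$-torsion point of $E'$ is, which is all that the lemma asserts and all that the rest of your argument (the factorization $f=(x-r)(x^2+bx+c)$, the quadratic extension, and the pull-back via $\phi^{-1}$) requires. With that one sentence repaired, the proof is correct and coincides with the paper's.
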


    \begin{proof}
        Recall that \(E'\) has the form 
        \[
        E': y^2 = x^3 + A'x + B' \;\; (A',B' \in \QQ).
        \]
        A point \(P = (x,y) \in E'(\QQ(2^\infty))\) has order 2 if and only if  \(y = 0\) and \(x\) is a root of the cubic polynomial \(x^3 + A'x + B'\). If \(E(\QQ(2^\infty))[2] \neq \{\mathcal{O}\}\), then, by Lemma \ref{remark about field of definition when twisting}, \(E'(\QQ(2^\infty))[2] \neq \{\mathcal{O}\}\) and \(x^3 + A'x + B'\) has a root in \(\QQ(2^\infty)\). If \(x^3 + A'x + B'\) is irreducible in \(\QQ[x]\) then the extension degree of the field of definition of the point of order 2 in \(E'(\QQ(2^\infty))\) is 3, which is not possible since \(\QQ(2^\infty)\) contains no subfield of degree 3 over \(\QQ\). This implies that \(x^3 + A'x + B'\) has a root in \(\QQ\) and the other roots also belong to \(\QQ(2^\infty)\). Therefore, \(E'(\QQ(2^\infty))[2] = E'[2]\), and cosequently, by Lemma \ref{remark about field of definition when twisting}, \(E(\QQ(2^\infty))[2] = E[2]\).
    \end{proof}

    This lemma allows us to use the following theorem from Knapp \cite{knapp}.

    \begin{theorem}[Knapp \cite{knapp}, Theorem 4.2]\label{propknapp}
    Let \(K\) be a field of characteristic not equal to 2 or 3, and let \(E\) be an elliptic curve defined over \(K\) given by \(y^2 = (x-\alpha)(x-\beta)(x -\gamma)\) with \(\alpha, \beta, \gamma \in K\). For \(P = (x,y) \in E(K)\), there exists a \(K\)-rational point \(Q = (x',y') \in E(K)\) such that \([2]Q = P\) if and only if   \(x - \alpha, x - \beta, x - \gamma\) are all squares in \(K\). Moreover, if we fix each sign of \(\sqrt{x - \alpha}\), \(\sqrt{x - \beta}\) and \(\sqrt{x - \gamma}\), then \(x'\) equals one of the following:
    \[
    \sqrt{x - \alpha}\sqrt{x - \beta} \pm \sqrt{x - \alpha}\sqrt{x - \gamma} \pm \sqrt{x - \beta}\sqrt{x - \gamma} + x
    \]
    or
    \[
    -\sqrt{x - \alpha}\sqrt{x - \beta} \pm \sqrt{x - \alpha}\sqrt{x - \gamma} \mp \sqrt{x - \beta}\sqrt{x - \gamma} + x.
    \]
    \end{theorem}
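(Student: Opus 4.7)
My plan is to separate the biconditional into its two directions and then read off the explicit ``moreover'' formula from the sufficiency construction. Throughout, I use the duplication formula for $E\colon y^2=f(X)$ with $f(X)=(X-\alpha)(X-\beta)(X-\gamma)=X^3-s_1X^2+s_2X-s_3$, where $s_i$ is the $i$-th elementary symmetric polynomial in $\alpha,\beta,\gamma$: for $Q=(x',y')\in E(K)$ the point $[2]Q$ has $X$-coordinate $\lambda^2+s_1-2x'$ with $\lambda=f'(x')/(2y')$, and $Y$-coordinate $\lambda(x'-x)-y'$.

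For the necessity direction, the core is a polynomial identity. Writing $u=X-\alpha$, $v=X-\beta$, $w=X-\gamma$, so that $f=uvw$ and $f'=vw+uw+uv$, I would verify
\[
(vw+uw+uv)^2-4(v+w)\,uvw=(uv+uw-vw)^2
\]
by expanding both sides using $(a+b+c)^2=a^2+b^2+c^2+2(ab+bc+ca)$ with $a=vw$, $b=uw$, $c=uv$. Assuming $[2]Q=P$, the duplication formula yields $x-\alpha=\lambda^2-(2x'-\beta-\gamma)=\lambda^2-(v'+w')$, where $u',v',w'$ are $u,v,w$ evaluated at $X=x'$. Multiplying through by $(2y')^2=4f(x')$ and invoking the identity converts this into
\[
x-\alpha=\left(\frac{(x'-\alpha)(x'-\beta)+(x'-\alpha)(x'-\gamma)-(x'-\beta)(x'-\gamma)}{2y'}\right)^2\in K^{\times 2},
\]
and the analogous statements for $x-\beta$ and $x-\gamma$ follow by the symmetric versions of the identity.

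For the sufficiency direction and the explicit list of $x'$-values, set $p=\sqrt{x-\alpha}$, $q=\sqrt{x-\beta}$, $r=\sqrt{x-\gamma}$ in $K$; after switching one sign if necessary one may assume $y=pqr$. I then propose $Q=(x',y')$ with $x'=x+pq+pr+qr$ and check by direct expansion that
\[
x'-\alpha=(p+q)(p+r),\quad x'-\beta=(p+q)(q+r),\quad x'-\gamma=(p+r)(q+r),
\]
so $f(x')$ is a perfect square and $y':=(p+q)(p+r)(q+r)\in K$ is a valid choice. A parallel factorization shows $f'(x')=2y'(p+q+r)$, whence $\lambda=p+q+r$, and substitution into the duplication formula produces $[2]Q=(x,pqr)=P$. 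The remaining three preimages are $Q+(\alpha,0)$, $Q+(\beta,0)$, and $Q+(\gamma,0)$; applying the chord-addition formula to each produces the respective $x$-coordinates $x-pq-pr+qr$, $x-pq+pr-qr$, and $x+pq-pr-qr$. Organizing these four values according to whether the coefficient of $pq$ is $+1$ or $-1$ matches them against the two families $pq\pm pr\pm qr+x$ and $-pq\pm pr\mp qr+x$ listed in the statement.

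The main obstacle I expect is the polynomial identity used in the necessity step: although its verification is a one-line expansion, the key insight is to recognize the correct linear combination $uv+uw-vw$ (rather than, say, $uv+uw+vw$) that turns $f'^2-4(v+w)f$ into a perfect square, and then to cite the two symmetric counterparts for $\beta$ and $\gamma$. Once that identity is isolated, the rest of the argument reduces to the explicit construction of one $Q$ and some careful sign bookkeeping to organize the four preimages into the two families displayed in the theorem.
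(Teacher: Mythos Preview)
The paper does not supply a proof of this theorem; it is quoted verbatim from Knapp's book and used as a black box. Your argument is correct and is essentially the classical one: the necessity direction rests on the identity
\[
\bigl((X-\beta)(X-\gamma)+(X-\alpha)(X-\gamma)+(X-\alpha)(X-\beta)\bigr)^2-4\bigl((X-\beta)+(X-\gamma)\bigr)f(X)=\bigl((X-\alpha)(X-\beta)+(X-\alpha)(X-\gamma)-(X-\beta)(X-\gamma)\bigr)^2,
\]
which you have stated and can be checked by the expansion you indicate; the sufficiency is the explicit construction \(x'=x+pq+pr+qr\), \(y'=(p+q)(p+r)(q+r)\), followed by translating by the three nontrivial \(2\)-torsion points to obtain the remaining preimages. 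Two small points worth making explicit in a final write-up: first, the case \(P=\mathcal{O}\) is trivial and the case \(y=0\) (so one of \(p,q,r\) vanishes) collapses the four listed values to two, matching the fact that the preimages then come in \(\pm\)-pairs; second, in the necessity step you should note that \(P\neq\mathcal{O}\) forces \(y'\neq 0\), so the division by \(2y'\) is legitimate. With those remarks your proposal is a complete proof, and there is nothing in the paper to compare it against.
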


    The first case we need to verify is when \(E'(\QQ)\) contains all the points of order 2.

    \begin{lemma}\label{all 2-tor rational}
    Suppose \(E(\QQ(2^\infty))\) has a point of order \(4\). If \(E'[2] \subseteq E'(\QQ)\) then \(E'[4] \subseteq E'(\QQ(2^\infty))\) and \(L = \QQ(2^\infty)\). In particular, \(E(\QQ(2^\infty))_{\tors}\) is isomorphic to one of the groups in Theorem \ref{fujita_list} and \(E(\QQ(2^\infty))[2^{\infty}]\) satisfies the conclusion of Proposition \ref{classification 2-elem torsion}.
    \end{lemma}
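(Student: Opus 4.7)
The plan is to combine \Cref{propknapp} with the key arithmetic property that $\QQ(2^\infty)$ contains $\sqrt{q}$ for every $q \in \QQ^\times$ (which follows at once from $\QQ(2^\infty) = \QQ(\sqrt{m} : m \in \ZZ)$ by writing $q = a/b = ab/b^2$, with $\sqrt{-1}\in\QQ(2^\infty)$ handling negative values). Under the hypothesis $E'[2] \subseteq E'(\QQ)$, nonsingularity lets me write $E'$ as $y^2 = (x-\alpha)(x-\beta)(x-\gamma)$ with pairwise distinct $\alpha, \beta, \gamma \in \QQ$, and then each of the six nonzero rational differences $\pm(\alpha - \beta), \pm(\alpha - \gamma), \pm(\beta - \gamma)$ is a square in $\QQ(2^\infty)$.

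I would then apply \Cref{propknapp} to each of the three 2-torsion points $(\alpha, 0), (\beta, 0), (\gamma, 0)$ of $E'$ over $K = \QQ(2^\infty)$: at each such point, two of the three relevant quantities $x_0 - \alpha, x_0 - \beta, x_0 - \gamma$ are among the squares listed above, and the third vanishes (trivially a square), so each 2-torsion point is of the form $[2]Q'$ for some $Q' \in E'(\QQ(2^\infty))$. Together with $E'[2] \subseteq E'(\QQ) \subseteq E'(\QQ(2^\infty))$, this yields the first conclusion $E'[4] \subseteq E'(\QQ(2^\infty))$.

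To deduce $L = \QQ(2^\infty)$, I would invoke the hypothesized point $P_4 \in E(\QQ(2^\infty))$ of order $4$. Setting $T := [2]\phi(P_4) \in E'[2]$, the inclusion just established gives some $Q' \in E'(\QQ(2^\infty))$ with $[2]Q' = T$, so $\phi(P_4) - Q' \in E'[2] \subseteq E'(\QQ(2^\infty))$, and hence $\phi(P_4) \in E'(\QQ(2^\infty))$. Since $\phi(P_4)$ has order $4 > 2$, \Cref{remark about field of definition when twisting} forces $\sqrt{D} \in \QQ(2^\infty)$, i.e., $L = \QQ(2^\infty)$. The ``in particular'' claim is then immediate: $\phi$ is defined over $\QQ(2^\infty)$, so $E(\QQ(2^\infty))_{\tors} \cong E'(\QQ(2^\infty))_{\tors}$ appears in \Cref{fujita_list}, and inspection of the 2-primary part of each entry confirms it is either a subgroup of $(\ZZ/8\ZZ) \oplus (\ZZ/8\ZZ)$ or equal to $(\ZZ/4\ZZ) \oplus (\ZZ/16\ZZ)$, giving \Cref{classification 2-elem torsion}. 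No serious obstacle is anticipated; the only delicate point is uniformly verifying the hypotheses of \Cref{propknapp} across the three 2-torsion points, which all reduce to the single arithmetic observation that every nonzero rational is a square in $\QQ(2^\infty)$.
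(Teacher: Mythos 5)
Your proposal is correct and follows essentially the same route as the paper: write $E'$ in the factored form with rational roots, use that $\QQ(2^\infty)$ contains the square root of every rational to verify the hypotheses of Theorem \ref{propknapp} at each $2$-torsion point (hence $E'[4]\subseteq E'(\QQ(2^\infty))$), and then apply Lemma \ref{remark about field of definition when twisting} to the order-$4$ point to force $L=\QQ(2^\infty)$. Your intermediate step producing $Q'$ with $[2]Q'=[2]\phi(P_4)$ is harmless but unnecessary, since $\phi(P_4)\in E'[4]\subseteq E'(\QQ(2^\infty))$ directly.
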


    \begin{proof}
        The hypothesis is equivalent to
        \[
        E': y^2 = x^3 + A'x + B' = (x - \alpha)(x - \beta)(x - \gamma) \;\;\; (\alpha, \beta, \gamma \in \QQ).
        \]
        This implies that \(\sqrt{\alpha - \beta}, \sqrt{\alpha - \gamma}\), and \(\sqrt{\beta - \gamma}\) are in \(\QQ(2^\infty)\). By Theorem \ref{propknapp}, \(E'(\QQ(2^\infty))\) contains \(E'[4]\) and, by Lemma \ref{remark about field of definition when twisting}, \(L = \QQ(2^\infty)\).
    \end{proof}

    From this point onward, we  consider the case \(E'(\QQ)[2] \cong \ZZ/2\ZZ\), and for this purpose, we introduce some notation.

    \begin{notation}\label{notation E'(Q)[2] = C2}
    When \(E'(\QQ)[2] \cong \ZZ/2\ZZ\), \(E'\) can be represented, after applying a translation by an element of \(\QQ\), by the form
    \begin{equation}\label{eq: 2-torsion rep}
    E': y^2 = x(x - (a + b\sqrt{n}))(x - (a - b\sqrt{n})),
    \end{equation}
    where \(a,b \in \QQ\) and \(n\) is a square-free integer. From now, we suppose \(E'\) has the above representation.
    \end{notation}

    Using this notation we can prove the following two lemmas:

    \begin{lemma}\label{c2 + c4 L galois}
        If \(E(\QQ(2^\infty))\) has a point of order \(4\) and \(E'(\QQ)[2] \cong \ZZ/2\ZZ\), then \(\widehat{L}\) is contained in \(\QQ(\D_4^{\infty})\), the compositum of all Galois extensions \(K/\QQ\) with Galois group \(\gal(K/\QQ)\) isomorphic to \(\D_4\).
    \end{lemma}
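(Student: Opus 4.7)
If $L = \QQ(2^{\infty})$, then $\widehat{L} = L \subseteq \QQ(2^{\infty}) \subseteq \QQ(\D_4^{\infty})$ and there is nothing to do. So assume $L$ is a proper quadratic extension of $\QQ(2^{\infty})$: by Lemma \ref{remark about field of definition when twisting}, $\phi(P_4) \in E'(L)\setminus E'(\QQ(2^{\infty}))$ is then a point of order $4$. The plan is to apply Theorem \ref{propknapp} to $E'$ (in the form \eqref{eq: 2-torsion rep}) over the field $L$ with the 4-torsion point $\phi(P_4)$: writing $[2]\phi(P_4) = (x_0,0) \in E'[2]$, the three differences $x_0 - 0,\; x_0 - (a + b\sqrt n),\; x_0 - (a - b\sqrt n)$ must all be squares in $L$.

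A short case analysis on $x_0 \in \{0,\, a + b\sqrt{n},\, a - b\sqrt{n}\}$ produces, in each case, a square root $\sqrt{\alpha} \in L$ for some $\alpha \in \QQ(\sqrt n)\setminus \QQ$ (for instance, $\alpha = -(a + b\sqrt{n})$ when $x_0 = 0$ and $\alpha = a + b\sqrt{n}$ when $x_0 = a + b\sqrt{n}$). Since $[L : \QQ(2^{\infty})] = 2$, this forces $L = \QQ(2^{\infty})(\sqrt{\alpha})$. The Galois conjugate $\sqrt{\bar\alpha}$ then automatically lies in $L$ because $\sqrt{\alpha}\cdot\sqrt{\bar\alpha} = \sqrt{\alpha\bar\alpha}$ belongs to $\QQ(2^{\infty})$ (as $\alpha\bar\alpha \in \QQ$). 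Consequently $L$ contains every $\QQ$-conjugate of $\sqrt\alpha$, so $L/\QQ$ is Galois and $\widehat{L} = L = \QQ(2^{\infty})\cdot \widetilde{K}$, where $\widetilde K := \QQ(\sqrt\alpha, \sqrt{\bar\alpha})$ is the Galois closure of $\QQ(\sqrt\alpha)$ over $\QQ$.

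It remains to show $\widetilde K \subseteq \QQ(\D_4^{\infty})$. Since $\widetilde K$ is the splitting field over $\QQ$ of the depressed quartic $X^4 - (\alpha + \bar\alpha)X^2 + \alpha\bar\alpha$, the classical Galois group analysis of such quartics shows that $\gal(\widetilde K/\QQ)$ is isomorphic to $(\ZZ/2\ZZ)^{\oplus 2}$ if $\alpha\bar\alpha$ is a rational square, to $\ZZ/4\ZZ$ if $n\cdot\alpha\bar\alpha$ is a rational square, and to $\D_4$ otherwise. In the first case $\widetilde K \subseteq \QQ(2^{\infty}) \subseteq \QQ(\D_4^{\infty})$; in the third case $\widetilde K \subseteq \QQ(\D_4^{\infty})$ by definition; and in the cyclic quartic case one embeds $\widetilde K$ into a genuine $\D_4$-extension by adjoining an auxiliary $\sqrt{\beta}$ with $\beta \in \QQ(\sqrt n)$ chosen so that $\QQ(\sqrt n)(\sqrt\beta)/\QQ$ is $\D_4$. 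The main technical obstacle is this last sub-case, since $\ZZ/4\ZZ$ is not itself a quotient of $\D_4$ and so no single $\D_4$-extension has $\widetilde K$ as a Galois subfield; verifying that $\widetilde K$ nonetheless lies in $\QQ(\D_4^{\infty})$ requires the explicit embedding argument just sketched.
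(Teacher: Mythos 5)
Your argument follows the paper's proof almost step for step: after reducing to the case $L \neq \QQ(2^\infty)$, both proofs apply Knapp's theorem (Theorem \ref{propknapp}) to the halved $2$-torsion point and conclude that $L = \QQ(2^\infty)(\sqrt{\alpha})$ for some $\alpha \in \QQ(\sqrt{n})\setminus\QQ$ drawn from the same short list, so that everything reduces to showing $\QQ(\sqrt{n})(\sqrt{\alpha}) \subseteq \QQ(\D_4^{\infty})$. The only divergence is at this last step: the paper disposes of it in one stroke by citing \cite[Proposition 3.4]{Daniels_errata}, which covers the $(\ZZ/2\ZZ)^{\oplus 2}$, $\ZZ/4\ZZ$ and $\D_4$ cases simultaneously, whereas you propose to re-derive it by hand. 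Your cyclic quartic sub-case is the one place that needs repair: as you yourself note, $\ZZ/4\ZZ$ is not a quotient of $\D_4$, so a cyclic quartic field is never a subfield of a single $\D_4$-extension, and the phrase ``embeds $\widetilde{K}$ into a genuine $\D_4$-extension'' cannot be taken literally. What does work is a compositum of two $\D_4$-extensions: writing $\widetilde{K} = \QQ(\sqrt{n})(\sqrt{\gamma})$, choose $\beta \in \QQ(\sqrt{n})$ with $N_{\QQ(\sqrt{n})/\QQ}(\beta)$ neither a rational square nor $n$ times one; then both $\QQ(\sqrt{n})(\sqrt{\beta\gamma})$ and $\QQ(\sqrt{n})(\sqrt{\beta})$ have $\D_4$ Galois closures, and their compositum contains $\sqrt{\gamma} = \sqrt{\beta\gamma}/\sqrt{\beta}$. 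With that precision supplied (or by simply invoking Daniels' corrigendum, as the paper does), your proof is complete and is essentially the paper's.
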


    \begin{proof}
        Let \(P_4 \in E(\QQ(2^\infty))\) be a point of order 4 and \(\phi(P_4) = (x,y)\) its image in \(E'(L)\). By Lemma \ref{remark about field of definition when twisting}, we know that either \(L = \QQ(2^\infty)\), which implies that \(L \subseteq \QQ(\D_4^{\infty})\), or \(L = \QQ(2^\infty)(\phi(P_4))\). By Theorem \ref{propknapp}, we know that either one of the following conditions is satisfied (using Notation \ref{notation E'(Q)[2] = C2}):
        \begin{itemize}
            \item both \(\sqrt{-a - b\sqrt{n}}\) and \(\sqrt{-a + b\sqrt{n}}\) are in \(L\), or
            \item both \(\sqrt{a + b\sqrt{n}}\) and \(\sqrt{2b\sqrt{n}}\) are in \(L\), or
            \item both \(\sqrt{a - b\sqrt{n}}\) and \(\sqrt{-2b\sqrt{n}}\) are in \(L\).
        \end{itemize}
        Hence, either \(L = \QQ(2^\infty)(\sqrt{\sqrt{n}})\), \(L = \QQ(2^\infty)(\sqrt{a + b\sqrt{n}})\) or \(L = \QQ(2^\infty)(\sqrt{a - b\sqrt{n}})\). Observe that the extension \(K = \QQ(\sqrt{n})(\sqrt{\alpha + \beta\sqrt{n}})/\QQ\), for \(\alpha, \beta \in \QQ\), has Galois group \(\gal(\widehat{K}/\QQ)\) isomorphic to either \((\ZZ/2\ZZ)\oplus(\ZZ/2\ZZ)\), \(\ZZ/4\ZZ\), or \(\D_4\). By \cite[Proposition 3.4]{Daniels_errata}, we conclude that \(K\), and, consequently, \(L\) and \(\widehat{L}\) are contained in \(\QQ(\D_4^{\infty})\).
    \end{proof}

     \begin{lemma}\label{c8 + c8 L=F}
        If \((\ZZ/8\ZZ) \oplus (\ZZ/8\ZZ) \subseteq E(\QQ(2^\infty))\) and \(E'(\QQ)[2] \cong \ZZ/2\ZZ\), then \(L = \QQ(2^\infty)\).
     \end{lemma}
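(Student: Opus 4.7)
The plan is to argue by contradiction: assume $L \neq \QQ(2^\infty)$ and derive a contradiction, letting $\sigma$ be the non-trivial element of $\gal(L/\QQ(2^\infty))$. First, the hypothesis $E[8] \subseteq E(\QQ(2^\infty))$ combined with $\phi$ gives $E'[8] = \phi(E[8]) \subseteq E'(L)$, and a direct computation with $\phi(x,y) = (xD, yD\sqrt{D})$ shows $\sigma(\phi(P)) = -\phi(P)$ for every $P \in E(\QQ(2^\infty))$. Hence $\sigma$ acts as $-I$ on $E'[8]$, so $E'(\QQ(2^\infty))[8] = E'[8]^{\sigma} = E'[2]$; in particular $E'(\QQ(2^\infty))$ contains no point of order $4$. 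Since $E(\QQ(2^\infty))$ does contain a point of order $4$, the proof of Lemma \ref{c2 + c4 L galois} restricts $L$ to one of $\QQ(2^\infty)(\sqrt{\sqrt{n}})$, $\QQ(2^\infty)(\sqrt{a+b\sqrt{n}})$, or $\QQ(2^\infty)(\sqrt{a-b\sqrt{n}})$, using the form $y^2 = x(x-(a+b\sqrt{n}))(x-(a-b\sqrt{n}))$ of Notation \ref{notation E'(Q)[2] = C2} for $E'$.

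Next, I would apply Theorem \ref{propknapp} to the $4$-torsion point $P_4 = (s, y_4) \in E'(L)$ above $(0,0)$, with $s = \sqrt{a^2 - b^2 n} \in \QQ(2^\infty)$. A short computation gives $y_4^2 = 2(a^2 - b^2 n)(s - a)$, which simplifies modulo $(\QQ(2^\infty)^{\times})^2$ to $s - a$ since every nonzero rational is a square in $\QQ(2^\infty)$. Because the first step rules out $P_4 \in E'(\QQ(2^\infty))$, one must have $y_4 \in L \setminus \QQ(2^\infty)$ and hence $y_4^2 \equiv D$, giving
\[ D \equiv s - a \pmod{(\QQ(2^\infty)^{\times})^2}. \]

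Finally, the existence of an $8$-torsion preimage of $P_4$ in $E'(L)$ requires, again by Theorem \ref{propknapp}, that $s$, $s - \alpha$, and $s - \beta$ are each squares in $L$, i.e., each $\equiv 1$ or $\equiv D$ modulo $(\QQ(2^\infty)^{\times})^2$; their product $y_4^2 \equiv D$ forces an odd number of them to be $\equiv D$. Performing the same analysis at the $4$- and $8$-torsion points above $(\alpha, 0)$ and $(\beta, 0)$ produces further constraints among the square classes of $\sqrt{n}$, $\alpha$, $\beta$, and $s$ in $\QQ(2^\infty)^{\times}/(\QQ(2^\infty)^{\times})^2$. Combining all of these with the specific form of $L$ from Lemma \ref{c2 + c4 L galois} (which pins down the class of $D$) shows that every surviving configuration collapses to $D \equiv 1 \pmod{(\QQ(2^\infty)^{\times})^2}$, contradicting $L \neq \QQ(2^\infty)$. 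The main obstacle is this case analysis, particularly the sub-case $L = \QQ(2^\infty)(\sqrt{\sqrt{n}})$, where the conditions coming from all three $2$-torsion points of $E'$ are needed simultaneously to close out the argument.
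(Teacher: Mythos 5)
Your setup is sound and overlaps with the paper's: assuming \(L\neq\QQ(2^\infty)\), you correctly observe that \(\sigma\) acts as \(-I\) on \(E'[8]=\phi(E[8])\), so no point of order \(4\) of \(E'\) is defined over \(\QQ(2^\infty)\); the computation \(y_4^2=2s^2(s-a)\) and the conclusion \(D\equiv s-a \pmod{(\QQ(2^\infty)^{\times})^2}\) are also correct. But the proof is not actually finished: the entire content lies in the final ``case analysis,'' which you only assert will ``collapse to \(D\equiv 1\)'' and explicitly flag as the main obstacle. As written, that step is a gap, and moreover I do not believe it closes in the framework you have chosen. The constraints you extract from the fiber over \((0,0)\) are genuinely consistent (one checks \((s-\alpha)(s-\gamma)\equiv s(s-a)\equiv sD\), so the parity condition ``an odd number of \(s\), \(s-\alpha\), \(s-\gamma\) is \(\equiv D\)'' is satisfiable whether \(s\equiv 1\) or \(s\equiv D\)), so no contradiction arises there; and the fibers over \((a\pm b\sqrt{n},0)\) involve \(x\)-coordinates such as \(\alpha\pm\sqrt{2b\sqrt{n}\,\alpha}\), whose relevant square roots are fourth roots of elements like \(2b\) and \(\alpha\) --- new elements whose classes in \(\QQ(2^\infty)^{\times}/(\QQ(2^\infty)^{\times})^2\) are not determined by the data (\(\sqrt{n}\), \(\alpha\), \(s\), \(D\)) you are tracking.

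This is precisely where the paper's proof diverges from yours: the obstruction it finds is not a square-class parity statement over \(\QQ(2^\infty)\) at all. The \(8\)-divisibility conditions from those fibers force \(\sqrt{\beta}\in L\) for \(\beta=\sqrt{a+b\sqrt{n}}\,\sqrt{2b}\), an element with \(\beta^2\in\QQ(2^\infty)\) and \(\beta\notin\QQ(2^\infty)\), so that \(L=\QQ(2^\infty)(\beta)=\QQ(2^\infty)(\sqrt{\beta})\); writing \(\sqrt{\beta}=a'+b'\beta\) and using \(\sqrt{-1}\in\QQ(2^\infty)\) then yields \(\beta=\pm\sqrt{-1}/(2b'^2)\in\QQ(2^\infty)\), a contradiction. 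That argument lives in \(L^{\times}/(L^{\times})^2\) rather than \(\QQ(2^\infty)^{\times}/(\QQ(2^\infty)^{\times})^2\), which is why a bookkeeping of classes \(\equiv 1\) or \(\equiv D\) of elements of \(\QQ(2^\infty)^{\times}\) cannot by itself reach the conclusion. To repair your proof you would need either to carry out the case analysis explicitly (and you will find you must pass to square classes in \(L\)), or to extract, as the paper does, a single element of \(L\setminus\QQ(2^\infty)\) whose square root is forced into \(L\).
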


     \begin{proof}
         Suppose that \(L \neq \QQ(2^\infty)\). By Lemma \ref{L galois}, we know that \(L/\mathbb{Q}\) is a Galois extension, and by the proof of Lemma \ref{c2 + c4 L galois}, we have
\[
L = \QQ(2^\infty)(\sqrt{\sqrt{n}}) = \QQ(2^\infty)(\sqrt{a + b\sqrt{n}}).
\]

Since \(E'[4] \subseteq E'(L)\), Theorem \ref{propknapp} implies that the \(x\)-coordinates of the points of order 4 in \(E'(L)\) are one of the following:
\begin{align*}
    &\pm \sqrt{a^2 - b^2n}, \\
    &\pm \sqrt{a + b\sqrt{n}}\sqrt{2b\sqrt{n}} + (a + b\sqrt{n}), \\
    &\pm \sqrt{a - b\sqrt{n}}\sqrt{-2b\sqrt{n}} + (a - b\sqrt{n}).
\end{align*}
Since \(E'(L)\) has 12 points of order 4, all six possibilities listed above indeed occur as \(x\)-coordinates of some point of order 4. Using these \(x\)-coordinates, the assumption that \(E'[8] \subseteq E'(L)\), and Theorem \ref{propknapp}, we obtain that
\[
\sqrt{\sqrt{a + b\sqrt{n}}}\sqrt{\sqrt{2b}}
\]
belongs to \(L\). Let
\[
\beta \coloneq \sqrt{a + b\sqrt{n}}\sqrt{2b},
\]
and observe that \(\beta^2 \in \QQ(2^\infty)\), \(\beta \notin \QQ(2^\infty)\), and \(L = \QQ(2^\infty)(\beta) = \QQ(2^\infty)(\sqrt{\beta})\). We claim that this is not possible. Indeed, suppose that there exist \(a', b' \in \QQ(2^\infty)\) such that \(\sqrt{\beta} = a' + b'\beta\). Squaring both sides, and using the properties of \(\beta\) derived above, we obtain \(a' = 1/2b'\). Finally, squaring both sides of \(\sqrt{\beta} = 1/2b' + b'\beta\), we get
\[
\beta = \pm\frac{\sqrt{-1}}{2b'^2} \in \QQ(2^\infty),
\]
which contradicts the properties of \(\beta\).
     \end{proof}

     With the results from the lemmas in this subsection, along with the lists of Theorems \ref{fujita_list} and \ref{Daniels_list}, we have shown that \(E(\QQ(2^\infty))[2^\infty]\) is either isomorphic to a subgroup of \((\ZZ/8\ZZ) \oplus (\ZZ/8\ZZ)\) or isomorphic to a subgroup of \((\ZZ/4\ZZ) \oplus (\ZZ/32\ZZ)\). Thus, to complete the proof of Proposition \ref{classification 2-elem torsion}, it remains to prove the non-existence of points of order 32.

    \begin{lemma}\label{no c32}
    \(E(\QQ(2^\infty))\) has no point of order \(32\).
    \end{lemma}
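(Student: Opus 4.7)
The plan is to argue by contradiction. Assume $P_{32} \in E(\QQ(2^\infty))$ has order $32$ and set $K = \QQ(2^\infty)$. Since $[16]P_{32}$ is a nontrivial 2-torsion point of $E(K)$, Lemma~\ref{lemma: full 2-torsion} gives $E[2] \subseteq E(K)$, and applying $\phi$ on 2-torsion yields $E'[2] \subseteq E'(K)$. If $E'[2] \subseteq E'(\QQ)$, then Lemma~\ref{all 2-tor rational} forces $L = K$, so $E \cong E'$ over $K$ and Theorem~\ref{fujita_list} gives an immediate contradiction since no group in Fujita's list contains a point of order $32$. Thus $E'(\QQ)[2] \cong \ZZ/2\ZZ$, $L \neq K$, and Lemma~\ref{c2 + c4 L galois} gives $\widehat{L} \subseteq \QQ(\D_4^\infty)$; in particular $E'$ has the form of Notation~\ref{notation E'(Q)[2] = C2}. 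Applying Theorem~\ref{Daniels_list} to $E'$ over $\widehat{L}$, the 2-primary part $E'(\widehat{L})[2^\infty]$ must be a subgroup of $\ZZ/8\ZZ \oplus \ZZ/32\ZZ$, this being the unique maximal 2-primary group in Daniels' list that admits a point of order $32$.

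The remaining task is to show that $E'(\widehat{L})[2^\infty]$ is actually too large to fit inside $\ZZ/8\ZZ \oplus \ZZ/32\ZZ$. I would first dispose of the case in which $L/\QQ$ is not Galois and $E'(K)[2^\infty] = (\ZZ/2\ZZ)^2$: Lemma~\ref{L hat is of order 16 case p = 2} applied with $n = 5$ then gives $[\widehat{L}:K] = 4$ and $(\ZZ/16\ZZ)^2 \subseteq E'(\widehat{L})$, which combined with the cyclic $\ZZ/32\ZZ$ generated by $\phi(P_{32})$ forces $\ZZ/16\ZZ \oplus \ZZ/32\ZZ \subseteq E'(\widehat{L})$, violating the Daniels bound. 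Equivalently, Lemma~\ref{cn+cn_contained} (the Weil pairing) yields $\QQ(\zeta_{16}) \subseteq \widehat{L}$, whose totally real subfield $\QQ(\zeta_{16})^+$ is a cyclic quartic extension of $\QQ$, directly contradicting Lemma~\ref{group lemma galois theory}.

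When $E'(K)$ contains a point $R_4$ of order $4$, I would analyze the $\pm$-eigenspace decomposition of $E(L)$ under $\gal(L/K) = \langle\sigma\rangle$: the $+$-part is $E(K)$ (containing $[8]P_{32}$ of order $4$ together with $E[2]$) and the $-$-part is $\phi^{-1}(E'(K))$ (containing $\phi^{-1}(R_4)$ of order $4$ together with $E[2]$), intersecting in $E[2]$. A direct count gives $|E(L)^+[4]| \geq 8$, $|E(L)^-[4]| \geq 8$, and intersection of order $4$, so $|E(L)^+[4] + E(L)^-[4]| \geq 16 = |E[4]|$; hence $E[4] \subseteq E(L)$ and $E'[4] \subseteq E'(L)$. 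Since $\ZZ/4\ZZ \subseteq E(K)$, Lemma~\ref{L galois} with $n = 4$ then forces $L/\QQ$ to be Galois. In the remaining Galois subcases, one pins down $L \in \{K(\sqrt{\sqrt{n}}),\, K(\sqrt{a+b\sqrt{n}}),\, K(\sqrt{a-b\sqrt{n}})\}$ exactly as in the proof of Lemma~\ref{c2 + c4 L galois}, and iterates Theorem~\ref{propknapp} through the five successive doublings from $[16]\phi(P_{32}) \in E'[2]$ up to $\phi(P_{32})$ to produce a chain of nested square-root conditions in $L$. Mirroring the closing manipulation of Lemma~\ref{c8 + c8 L=F}, one extracts an element $\beta \in L$ with $\beta^2 \in K$, $\beta \notin K$, and $\sqrt{\beta} \in L = K(\sqrt{D}) = K(\beta)$, which is ruled out by the elementary algebraic computation used there.

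The main obstacle is the Knapp-theoretic argument in the Galois subcases: extending the three-doubling analysis of Lemma~\ref{c8 + c8 L=F} to five doublings introduces branching over the choice of the starting 2-torsion point $[16]\phi(P_{32}) \in \{(0,0),\, (a+b\sqrt{n},0),\, (a-b\sqrt{n},0)\}$, and careful bookkeeping is required to verify that each of the resulting systems of intermediate square-root conditions is simultaneously unsatisfiable in the quadratic extension $L = K(\sqrt{D})$.
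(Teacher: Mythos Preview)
Your handling of the non-Galois case matches the paper's: apply Lemma~\ref{L hat is of order 16 case p = 2} with $n=5$ to get $E'[16]\subseteq E'(\widehat{L})$, invoke the Weil pairing (Lemma~\ref{cn+cn_contained}) to place $\QQ(\zeta_{16})$ inside $\widehat{L}$, and contradict Lemma~\ref{group lemma galois theory} via the cyclic quartic subfield $\QQ(\zeta_{16})^+$. Your preliminary reductions (disposing of full rational $2$-torsion via Lemma~\ref{all 2-tor rational}; checking $E'(K)[2^\infty]\subseteq(\ZZ/2\ZZ)^2$ in the non-Galois case via the $R_4$-eigenspace argument) are correct and in fact spell out steps the paper leaves implicit.

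The Galois case is where your plan diverges, and here there is a genuine gap in the sense that your proposed route is not worked out and is far harder than necessary. You propose extending the Knapp computation of Lemma~\ref{c8 + c8 L=F} to five doublings, but that lemma exploited the presence of the \emph{full} group $E'[8]$ in $E'(L)$, which pins down all intermediate $x$-coordinates; with only a single cyclic $\ZZ/32\ZZ$ available, each halving step branches over several possible $x$-coordinates, and it is not clear the trick producing the impossible $\beta$ survives every branch. The paper avoids all of this with a two-line argument that uses an ingredient you already assembled. Your own Daniels bound gives $E'(L)[2^\infty]\subseteq(\ZZ/8\ZZ)\oplus(\ZZ/32\ZZ)$ (since $\widehat{L}=L$ in the Galois case), so $[8]\bigl(E'(L)[2^\infty]\bigr)=\langle[8]\phi(P_{32})\rangle$ is cyclic of order $4$ and automatically $\gal(L/\QQ)$-invariant. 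Exactly as in Lemma~\ref{degree of definition isogeny}, $\gal\bigl(\QQ([8]\phi(P_{32}))/\QQ\bigr)$ then injects into $(\ZZ/4\ZZ)^\times$ and is abelian; since $[8]\phi(P_{32})$ has order $4>2$, Lemma~\ref{remark about field of definition when twisting} forces $L=\QQ(2^\infty)\bigl([8]\phi(P_{32})\bigr)\subseteq\QQ^{\ab}$. But then $\phi(P_{32})\in E'(\QQ^{\ab})$ has order $32$, contradicting Theorem~\ref{chou_list_maximalabelian}. No coordinate computation is needed.
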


    \begin{proof}
        Suppose that \(E(\QQ(2^\infty))\) has a point of order 32. We claim that the extension \(L/\QQ\) is Galois. Suppose it is not. By Lemmas \ref{L galois} and \ref{lemma: full 2-torsion}, \(E'(L)[2^{\infty}] \cong (\ZZ/2\ZZ)\oplus(\ZZ/32\ZZ)\) and, by Lemma \ref{L hat is of order 16 case p = 2},  \(E'[16] \subseteq E'(\widehat{L})\). Now, we can apply Lemma \ref{cn+cn_contained} and get that \(\widehat{L}\) contains the cyclotomic field \(\QQ(\zeta_{16})\), which contains a quartic cyclic extension of \(\QQ\). This contradicts Lemma \ref{group lemma galois theory} since \(L/\QQ\) is not a Galois extension by assumption.

        Now, when \(L/\QQ\) is a Galois extension we have three possibilities: \(E'(L)[2^{\infty}]\) is isomorphic to \((\ZZ/2^k\ZZ)\oplus(\ZZ/32\ZZ)\) for some \(k \in \{1,2,3\}\). Let \(P_{32} \in E(\QQ(2^\infty))\) be a point of order 32, and \(\phi(P_{32})\) its image in \(E'(L)\). By the structure of \(E'(L)[2^{\infty}]\) we conclude that the subgroup \(\left<[8]\phi(P_{32})\right>\) is \(\gal(L/\QQ)\)-invariant and its field of definition \(\QQ([8]\phi(P_{32}))\) is an abelian extension of \(\QQ\). Since \(L = \QQ(2^\infty)([8]\phi(P_{32}))\), we observe that \(L/\QQ\) is an abelian extension and, in particular, \(L\) is contained in \(\QQ^{\ab}\), the maximal abelian extension of \(\QQ\). From this we get a contradiction, because \(E'(\QQ^{\ab})\) has no point of order 32, by the list in Theorem \ref{chou_list_maximalabelian}.
    \end{proof}

    Together, Lemmas \ref{all 2-tor rational}, \ref{c2 + c4 L galois}, \ref{c8 + c8 L=F} and \ref{no c32} complete the proof of Proposition \ref{classification 2-elem torsion}.

\subsection{The 3-primary part of \(E(\QQ(2^\infty))_{\tors}\)}\label{The case p = 3}\hfill\\

    In this subsection, we study the 3-primary torsion subgroups \(E(\QQ(2^{\infty}))[3^{\infty}]\). We prove that a point of order 9 in \(E(\QQ(2^{\infty}))\) makes \(E\) be \(\QQ(2^{\infty})\)-isomorphic to the elliptic curve \(E'\).

    \begin{proposition}
    If \(E(\QQ(2^{\infty}))\) has a point of order \(9\), then \(L = \QQ(2^{\infty})\). Therefore \(E(\QQ(2^{\infty}))_{\tors}\) is isomorphic to one of the groups in Theorem \(\ref{fujita_list}\).
    \end{proposition}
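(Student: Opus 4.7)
The plan is to argue by contradiction: assume \(L\neq\QQ(2^{\infty})\) (equivalently \(\sqrt{D}\notin\QQ(2^{\infty})\)) and derive the contradiction \(L=\QQ(2^{\infty})\). The strategy mirrors that of Proposition \ref{E[p] list}: first reduce to the case where \(L/\QQ\) is Galois, then exploit the resulting pro-2 structure to control the field of definition of \(\phi(P_9)\), and finally translate this control back to the coordinates of \(P_9\) via the explicit formula \(\phi(P_9)=(xD,yD\sqrt D)\).

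For the Galois reduction, if \(\ZZ/3\ZZ\subseteq E'(\QQ(2^{\infty}))\), Theorem \ref{twist_torsion} upgrades this to \((\ZZ/3\ZZ)\oplus(\ZZ/3\ZZ)\subseteq E'(L)\), and Lemma \ref{L galois} with \(n=3\) (using \(\ZZ/3\ZZ\subseteq E(\QQ(2^{\infty}))\) from \([3]P_9\)) forces \(L/\QQ\) to be Galois. Otherwise, if \(L/\QQ\) is non-Galois, Lemma \ref{L hat is of order 16} with \(p=3,n=2\) gives \([\widehat L:\QQ(2^{\infty})]=4\) and \(E'[9]\subseteq E'(\widehat L)\); Lemma \ref{cn+cn_contained} then places \(\QQ(\zeta_9)\) inside \(\widehat L\), and since \(\QQ(\zeta_9)\cap\QQ(2^{\infty})=\QQ(\sqrt{-3})\), the compositum \(\QQ(\zeta_9)\cdot\QQ(2^{\infty})\) is a cubic extension of \(\QQ(2^{\infty})\), contradicting \(3\nmid 4\). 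With \(L/\QQ\) Galois, \(\gal(L/\QQ)\) is pro-2 as an extension of the elementary abelian pro-2 group \(\gal(\QQ(2^{\infty})/\QQ)\) by \(\ZZ/2\ZZ\); in particular, every finite subextension of \(L/\QQ\) has 2-power degree.

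Next I pin down \(E'(L)[9]\). Theorem \ref{fujita_list} forces \(E(\QQ(2^{\infty}))[3^{\infty}]=\ZZ/9\ZZ\), and Theorem \ref{twist_torsion} gives \(\#E'(L)[9]=9\cdot\#E'(\QQ(2^{\infty}))[9]\), with \(E'(\QQ(2^{\infty}))[9]\) drawn from the 3-primary parts of Fujita's list, namely \(\{\OO\}\), \(\ZZ/3\ZZ\), \((\ZZ/3\ZZ)\oplus(\ZZ/3\ZZ)\), \(\ZZ/9\ZZ\). The last two options force \(E'[9]\subseteq E'(L)\), and Lemma \ref{cn+cn_contained} then puts \(\QQ(\zeta_9)\subseteq L\), contradicting the pro-2 property. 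Hence \(E'(L)[9]\) is either \(\langle\phi(P_9)\rangle\cong\ZZ/9\ZZ\) or \(\langle\phi(P_9),Q_3\rangle\cong\ZZ/9\ZZ\oplus\ZZ/3\ZZ\), where \(Q_3\) generates \(E'(\QQ(2^{\infty}))[3]\). In the first subcase, \(\langle\phi(P_9)\rangle\) is tautologically \(\gal(L/\QQ)\)-invariant, so Lemma \ref{degree of definition isogeny} yields \([\QQ(\phi(P_9)):\QQ]\mid\varphi(9)=6\), and the 2-power constraint sharpens this to \(\leq 2\); hence \(\phi(P_9)\in E'(\QQ(2^{\infty}))\), and since \(y\neq 0\) in \((xD,yD\sqrt D)\), this gives \(\sqrt D\in\QQ(2^{\infty})\), i.e.\ \(L=\QQ(2^{\infty})\). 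In the second subcase with \(Q_3\in E'(\QQ)\), Lemma \ref{degree of definition isogeny + rational torsion} applied over a finite Galois subfield of \(L\) of 2-power degree yields \([\QQ(\phi(P_9)):\QQ]\mid 2\), and the same conclusion follows.

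The main obstacle is the remaining situation where \(Q_3\notin E'(\QQ)\) but \(Q_3\in E'(\QQ(\sqrt d))\) for some squarefree \(d\in\ZZ\); Lemma \ref{degree of definition isogeny + rational torsion} needs a genuinely rational point of order \(p\), not merely a rational isogeny kernel. I would circumvent this via the quadratic twist \(\widetilde{E'}:=(E')^d=E^{Dd}\): Theorem \ref{twist_torsion} applied to the pair \((E',d)\) over \(\QQ(\sqrt d)\) yields \(\widetilde{E'}(\QQ)[3]\cong\ZZ/3\ZZ\), generated by some \(\widetilde{Q_3}\), and because \(\sqrt d\in\QQ(2^{\infty})\) the twist produces exactly the same field \(L=\QQ(2^{\infty})(\sqrt{Dd})\), with \(E\cong\widetilde{E'}\) over \(L\) via an isomorphism \(\widetilde\phi\). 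The previous analysis then runs for \((\widetilde{E'},\widetilde\phi,\widetilde{Q_3})\), with one additional check: if \(\widetilde{Q_3}\in\langle\widetilde\phi(P_9)\rangle\) then \([3]\widetilde\phi(P_9)=\widetilde\phi([3]P_9)\in\widetilde{E'}(\QQ)\), which forces \(\sqrt{Dd}\in\QQ(2^{\infty})\) and hence \(L=\QQ(2^{\infty})\); otherwise \(\langle\widetilde\phi(P_9),\widetilde{Q_3}\rangle=\widetilde{E'}(L)[9]\) and Lemma \ref{degree of definition isogeny + rational torsion} applies as in the previous case.
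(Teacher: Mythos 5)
Your overall strategy --- reduce to the case where \(L/\QQ\) is Galois via Lemmas \ref{L galois}, \ref{L hat is of order 16} and \ref{cn+cn_contained}, then force the image of a \(3\)-power torsion point into \(E'(\QQ(2^{\infty}))\) by exhibiting a Galois-invariant cyclic subgroup and applying Lemma \ref{degree of definition isogeny} --- is the same as the paper's, and most steps are sound. There is, however, one genuine gap: you invoke Theorem \ref{fujita_list} for \(E\) itself to conclude \(E(\QQ(2^{\infty}))[3^{\infty}]\cong\ZZ/9\ZZ\). That theorem applies only to elliptic curves defined over \(\QQ\); in the situation you are assuming (\(L\neq\QQ(2^{\infty})\)), \(E\) is not \(\QQ(2^{\infty})\)-isomorphic to any curve over \(\QQ\), so this is precisely the kind of statement the paper is in the process of establishing and cannot be presupposed. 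The concrete consequence is a missing case in your enumeration: it is not excluded a priori that \(E(\QQ(2^{\infty}))[9]\cong(\ZZ/3\ZZ)\oplus(\ZZ/9\ZZ)\) while \(E'(\QQ(2^{\infty}))[9]=\{\OO\}\) (note \(\zeta_3\in\QQ(2^{\infty})\), so full \(3\)-torsion of \(E\) over \(\QQ(2^{\infty})\) is compatible with the Weil pairing). Theorem \ref{twist_torsion} then gives \(E'(L)[9]\cong(\ZZ/3\ZZ)\oplus(\ZZ/9\ZZ)\), but the order-\(3\) complement of \(\langle\phi(P_9)\rangle\) is \(\phi\bigl(E(\QQ(2^{\infty}))[3]\bigr)\), which by Lemma \ref{remark about field of definition when twisting} does \emph{not} lie in \(E'(\QQ(2^{\infty}))\) when \(L\neq\QQ(2^{\infty})\). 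Hence there is no \(Q_3\) generating \(E'(\QQ(2^{\infty}))[3]\), and neither of your two subcases covers this situation.

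The repair is short and is what the paper does: in every surviving case \(E'(L)[9]\) has the form \((\ZZ/9\ZZ)\oplus(\ZZ/m\ZZ)\) with \(m\in\{1,3\}\) (full \(9\)-torsion is excluded because \(\gal(L/\QQ)\) is pro-\(2\), so \(\zeta_9\notin L\)), and by Lemma \ref{cyclic_isogeny_when_cn+cnm} the subgroup \([3]E'(L)[9]=\langle\phi([3]P_9)\rangle\) is \(\gal(L/\QQ)\)-invariant and cyclic of order \(3\). Lemma \ref{degree of definition isogeny} then gives \([\QQ(\phi([3]P_9)):\QQ]\mid\varphi(3)=2\), so \(\phi([3]P_9)\in E'(\QQ(2^{\infty}))\), and since \([3]P_9\) has order \(3\) (not \(2\)), Lemma \ref{remark about field of definition when twisting} forces \(L=\QQ(2^{\infty})\). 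Working with \(\phi([3]P_9)\) rather than \(\phi(P_9)\) uniformly handles the missing case and also renders your entire second subcase --- the auxiliary twist by \(d\) and the appeal to Lemma \ref{degree of definition isogeny + rational torsion} --- unnecessary; that machinery is correct but is only needed in the paper at the prime \(5\), where \(\varphi(5)=4\) is a power of \(2\) and the cheap degree argument fails.
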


    \begin{proof}
        Suppose that \(E(\QQ(2^{\infty}))\) has a point of order 9. We have two possibilities:
        \begin{enumerate}
            \item[(i)] If \(E'(\QQ(2^{\infty}))[3] = \{\OO\}\). Since \(L\) does not contain the field \(\QQ(\zeta_9)\), Theorem \ref{twist_torsion} and Lemma \ref{cn+cn_contained} imply that \(E'(L)[9] \subseteq (\ZZ/3\ZZ) \oplus (\ZZ/9\ZZ)\). We need to consider the following possibilities: 
            \begin{itemize}
                \item If \(L/\QQ\) is a Galois extension. Then, using Lemma \ref{cyclic_isogeny_when_cn+cnm}, we observe that \(E'\) has a 3-isogeny and if \(P_3 \in E'(L)\) is a point of order 3 such that \(\left<P_3\right>\) is \(\gal(L/\QQ)\)-invariant, Lemma \ref{degree of definition isogeny} implies that \([\QQ(P_3):\QQ]\) divides 2. Hence \(P_3 \in \QQ(2^{\infty})\) and \(L = \QQ(2^{\infty})\). 
                \item If \(L/\QQ\) is not a Galois extension. Then, by Lemma \ref{L hat is of order 16}, we see that \([\widehat{L}:\QQ(2^{\infty})] = 4\) and \(\ZZ/9\ZZ \oplus \ZZ/9\ZZ \subseteq E'(\widehat{L})\). However, Lemma \ref{cn+cn_contained} concludes that \(\QQ(\zeta_9) \subseteq \widehat{L}\), which is not true.
            \end{itemize}

            \item[(ii)] If \(E'(\QQ(2^{\infty}))[3] = \left<P_3\right> \cong \ZZ/3\ZZ\). Then, by Lemma \ref{L galois}, \(L/\QQ\) is a Galois extension and \(E'(L)[9] \cong (\ZZ/3\ZZ) \oplus (\ZZ/9\ZZ)\). Let \(P_9 \in E(\QQ(2^{\infty}))\) be a point of order 9, and \(\phi(P_9)\) its image in \(E'(L)\). Lemma \ref{cyclic_isogeny_when_cn+cnm} implies that \(\left<[3]\phi(P_9)\right>\) is \(\gal(L/\QQ)\)-invariant, and Lemma \ref{degree of definition isogeny} concludes that \([\QQ([3]\phi(P_9)):\QQ]\) divides 2. Hence, by Lemma \ref{remark about field of definition when twisting}, \(L = \QQ(2^{\infty})([3]\phi(P_9)) = \QQ(2^{\infty})\).
        \end{enumerate}
    \end{proof}

\subsection{The 5-primary part of \(E(\QQ(2^\infty))_{\tors}\)}\label{The case p = 5}\hfill\\

    In this subsection, we classify the 5-primary torsion subgroups \(E(\QQ(2^{\infty}))[5^{\infty}]\) up to isomorphism. Similarly to the approach in Subsection \ref{The case p = 2}, we  prove that if \(E(\QQ(2^{\infty}))\) has a point of order 5, then the field \(L\) is contained in \(\mathbb{Q}^{\text{ab}}\), the maximal abelian extension of \(\mathbb{Q}\). Additionally, we  use Theorem \ref{chou_list_maximalabelian} to further restrict the possible structures.
   
   \begin{proposition}\label{5 L abelian}
        If \(E(\QQ(2^{\infty}))\) has a point of order \(5\), then \(L/\QQ\) is an abelian extension.
    \end{proposition}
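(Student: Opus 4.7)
The plan is to split into two cases depending on whether $E'(\QQ(2^{\infty}))[5]$ is trivial or not, and in each case to identify $L$ explicitly as a compositum of abelian extensions of $\QQ$. Set $K = \QQ(2^{\infty})$, let $P_5 \in E(K)$ be the given point of order 5, and let $\phi(P_5) \in E'(L)$ denote its image under the twist isomorphism. We may assume $L \neq K$, since otherwise $L/\QQ$ is already abelian. Observe that by Lemma \ref{cn+cn_contained} and the fact that $\QQ(\zeta_5)/\QQ$ is a cyclic quartic extension not contained in $K$, both $E(K)[5] \cong \ZZ/5\ZZ$ and $E'(K)[5]$ is either trivial or cyclic of order 5 (full 5-torsion over $K$ would drag $\QQ(\zeta_5)$ into $K$).

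In the first case $E'(K)[5] = \{\OO\}$, the first step is to show that $L/\QQ$ is Galois. If it were not, Lemma \ref{L hat is of order 16} (applied with $p=5$, $n=1$) would give $[\widehat{L}:K] = 4$ and $(\ZZ/5\ZZ)^{\oplus 2} \subseteq E'(\widehat{L})$; Lemma \ref{cn+cn_contained} would then force the quartic cyclic field $\QQ(\zeta_5)$ into $\widehat{L}$, contradicting Lemma \ref{group lemma galois theory}. With $L/\QQ$ Galois, Theorem \ref{twist_torsion} yields $E'(L)[5] \cong E(K)[5] \times E'(K)[5] \cong \ZZ/5\ZZ$, so the cyclic group $\langle \phi(P_5)\rangle = E'(L)[5]$ is automatically $\gal(L/\QQ)$-invariant. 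Lemma \ref{degree of definition isogeny} then embeds $\gal(\QQ(\phi(P_5))/\QQ)$ into $(\ZZ/5\ZZ)^{\times} \cong \ZZ/4\ZZ$, making $\QQ(\phi(P_5))/\QQ$ cyclic, hence abelian. Lemma \ref{remark about field of definition when twisting}, together with $L \neq K$, gives $L = K(\phi(P_5))$, exhibiting $L$ as a compositum of two abelian extensions of $\QQ$, hence abelian over $\QQ$.

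In the second case $E'(K)[5] \cong \ZZ/5\ZZ$, Theorem \ref{twist_torsion} upgrades to $E'(L)[5] \cong (\ZZ/5\ZZ)^{\oplus 2} = E'[5]$, and Lemma \ref{cn+cn_contained} places $\QQ(\zeta_5) \subseteq L$. Since $\QQ(\zeta_5) \cap K = \QQ(\sqrt{5})$ (the only quadratic subfield of $\QQ(\zeta_5)$, already sitting inside $K$), the compositum $K \cdot \QQ(\zeta_5)$ is a quadratic extension of $K$ contained in $L$; comparing with $[L:K] = 2$ yields $L = K \cdot \QQ(\zeta_5)$, which is abelian over $\QQ$ as a compositum of abelian extensions.

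The principal obstacle is verifying that $L/\QQ$ is Galois in the first case, since only then can Lemma \ref{degree of definition isogeny} be applied to the cyclic group $\langle \phi(P_5)\rangle$. This step is precisely where the joint strength of Lemma \ref{L hat is of order 16} and Lemma \ref{group lemma galois theory} does the heavy lifting, exploiting the fact that $\QQ(\zeta_5)$ furnishes an unavoidable quartic cyclic extension of $\QQ$ which cannot fit inside the Galois closure of a non-Galois quadratic extension of $\QQ(2^{\infty})$.
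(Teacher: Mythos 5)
Your proof is correct and follows essentially the same route as the paper: in both arguments the key step is ruling out a non-Galois $L/\QQ$ via Lemmas \ref{L hat is of order 16}, \ref{cn+cn_contained} and \ref{group lemma galois theory}, and then producing a quartic cyclic subfield of $L$ (namely $\QQ(\zeta_5)$ or $\QQ(\phi(P_5))$, the latter via the $\gal(L/\QQ)$-invariance of $\langle\phi(P_5)\rangle$ and Lemma \ref{degree of definition isogeny}). The only difference is cosmetic: where the paper invokes Lemma \ref{group theory lemma 1} to conclude that $\gal(L/\QQ)$ is abelian, you identify $L$ directly as the compositum $\QQ(2^{\infty})\cdot\QQ(\phi(P_5))$ (resp.\ $\QQ(2^{\infty})\cdot\QQ(\zeta_5)$) of abelian extensions of $\QQ$, which is equally valid.
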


    \begin{proof}
        Suppose \(L/\QQ\) is not a Galois extension. By Lemmas \ref{L galois}, \ref{L hat is of order 16} and \ref{cn+cn_contained}, we see that \([\widehat{L}:\QQ(2^{\infty})] = 4\) and \(\QQ(\zeta_5) \subseteq \widehat{L}\). This contradicts Lemma \ref{group lemma galois theory}, since \(\QQ(\zeta_5)/\QQ\) is a quartic cyclic extension. 
        
        Now, suppose that \(L \neq \QQ(2^\infty)\). We claim that \(L/\QQ\) has an intermediate field \(K\) such that \(K/\QQ\) is a quartic cyclic extension. Indeed, if \(E'(\QQ(2^\infty))[5] \neq \{\OO\}\), then, by Theorem \ref{twist_torsion} and Lemma \ref{cn+cn_contained}, \(L\) contains \(\QQ(\zeta_5)\), which is a quartic cyclic extension of \(\QQ\). Now, suppose that \(E'(\QQ(2^\infty))[5] = \{\OO\}\) and let \(P_5 \in E(\QQ(2^\infty))[5]\) be a point of order 5. We claim that \(\QQ(\phi(P_5))/\QQ\) is a quartic cyclic extension. Indeed, since \(\left<\phi(P_5)\right>\) is \(\gal(L/\QQ)\)-invariant and, by Lemma \ref{remark about field of definition when twisting}, \([\QQ(\phi(P_5)):\QQ]\) is not equal to neither 1 or 2, we have the following isomorphism:
        \begin{align*}
    \gal(\QQ(\phi(P_5))/\QQ) &\overset{\sim}{\longrightarrow} (\ZZ/5\ZZ)^{\times}\\
            \tau &\longmapsto a_{\tau} \;\;(\phi(P_5)^{\tau} = a_{\tau}\phi(P_5)).
    \end{align*}
        Since \((\ZZ/5\ZZ)^\times\) is isomorphic to \(\ZZ/4\ZZ\), it follows that \(\QQ(\phi(P_5))/\QQ\) is a quartic cyclic extension. 
        
        Finally, applying Lemma \ref{group theory lemma 1} with \(G = \gal(L/\QQ)\), \(H_1 = \gal(L/\QQ(2^\infty))\) and \(H_2 = \gal(L/K)\), we conclude that \(\gal(L/\QQ)\) is an abelian group.
    \end{proof}  

    \begin{proposition}
    \(E(\QQ(2^{\infty}))\) has no point of order \(25\).
    \end{proposition}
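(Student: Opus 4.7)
The plan is to assume, for contradiction, that $P_{25}\in E(\QQ(2^\infty))$ has order $25$ and derive a contradiction by giving a very strong bound on the field of definition of $\phi(P_{25})$. Since $[5]P_{25}$ has order $5$, Proposition \ref{5 L abelian} forces $L/\QQ$ to be abelian, hence $L\subseteq\QQ^{\ab}$. The image $\phi(P_{25})$ then lies in $E'(\QQ^{\ab})$, and inspecting Theorem \ref{chou_list_maximalabelian} I observe that the only group on Chou's list containing an element of order $25$ is $\ZZ/25\ZZ$ itself. Consequently $E'(\QQ^{\ab})_{\tors}\cong\ZZ/25\ZZ$, and in particular the cyclic subgroup $\langle\phi(P_{25})\rangle=E'(\QQ^{\ab})[25]$ is stable under the action of the full Galois group $\gal(\overline{\QQ}/\QQ)$.

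Next I would apply Lemma \ref{degree of definition isogeny} with $K=L$ to obtain $[\QQ(\phi(P_{25})):\QQ]\mid \varphi(25)=20$. The crucial sharpening is that this degree actually divides $4$: if $5$ divided it, then, because $\QQ(\phi(P_{25}))/\QQ$ is abelian, there would exist an intermediate field $F$ with $[F:\QQ]=5$; since $\QQ(2^\infty)$ is a $2$-elementary abelian extension of $\QQ$, one would have $F\cap\QQ(2^\infty)=\QQ$ and therefore $[F\cdot\QQ(2^\infty):\QQ(2^\infty)]=5$, contradicting $F\cdot\QQ(2^\infty)\subseteq L$ together with $[L:\QQ(2^\infty)]\leq 2$.

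Finally, I would rule out the three remaining possibilities $[\QQ(\phi(P_{25})):\QQ]\in\{1,2,4\}$. Degrees $1$ and $2$ place $\QQ(\phi(P_{25}))$ inside $\QQ(2^\infty)$, hence $\phi(P_{25})\in E'(\QQ(2^\infty))[25]$, contradicting Theorem \ref{fujita_list}. The degree $4$ case would place $\ZZ/25\ZZ$ in $\Phi_{\QQ}(4)$; but by the classification of $\Phi_{\QQ}(4)$ due to Chou, González-Jiménez, and Najman, recalled in Section \ref{introduction}, no group in $\Phi_{\QQ}(4)$ contains an element of order $25$. This final step is the main obstacle, as it depends on an external classification rather than on the lemmas developed in the present paper; the remainder of the argument is a direct combination of Chou's theorem, Lemma \ref{degree of definition isogeny}, and the elementary observation that $\QQ(2^\infty)$ admits no odd-degree subextension of $\QQ$.
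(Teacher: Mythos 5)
Your argument is correct, but it reaches the key intermediate fact by a genuinely different route than the paper. Both proofs end the same way: bound $[\QQ(\phi(P_{25})):\QQ]$ by $4$ using the fact that $\QQ(\phi(P_{25}))$ sits inside $L$ with $[L:\QQ(2^\infty)]\le 2$, and then invoke the classification of $\Phi_{\QQ}(4)$ to get a contradiction (together with Theorem \ref{fujita_list} for the degrees $1$ and $2$). The difference is in how one shows that $\langle\phi(P_{25})\rangle$ is Galois-invariant so that Lemma \ref{degree of definition isogeny} applies. The paper splits into two cases according to whether $E'(\QQ(2^\infty))[5]$ is trivial: in the first case it deduces a rational $25$-isogeny from Theorem \ref{twist_torsion} and Lemma \ref{cyclic_isogeny_when_cn+cnm}, while in the second case it must first replace $E'$ by a further quadratic twist to put a $5$-torsion point over $\QQ$ and then invoke the more delicate Lemma \ref{degree of definition isogeny + rational torsion}. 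You bypass this case division entirely: after Proposition \ref{5 L abelian} gives $L\subseteq\QQ^{\ab}$, you read off from Theorem \ref{chou_list_maximalabelian} that $\ZZ/25\ZZ$ is the only group on Chou's list with an element of order $25$, hence $E'(\QQ^{\ab})_{\tors}\cong\ZZ/25\ZZ$ and $\langle\phi(P_{25})\rangle=E'(\QQ^{\ab})[25]$ is automatically stable under $\gal(\overline{\QQ}/\QQ)$. This is cleaner and renders Lemma \ref{degree of definition isogeny + rational torsion} unnecessary for this proposition, at the cost of leaning once more on the full strength of Chou's $\QQ^{\ab}$ classification. Your closing worry about the reliance on the external classification of $\Phi_{\QQ}(4)$ is not a real weakness: the paper's proof depends on exactly the same external input (Corollary 8.7 of González-Jiménez--Najman).
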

    
    \begin{proof}
    Suppose that \(E(\QQ(2^{\infty}))\) has a point of order 25. By Proposition \ref{5 L abelian}, the extension \(L/\QQ\) is abelian. Additionally, by Lemma \ref{cn+cn_contained}, it follows that \(E[25] \not\subseteq E(L)\). This implies that \(E(L)[25] \subseteq (\ZZ/25\ZZ) \oplus (\ZZ/5\ZZ)\). Thus, we have two possible cases:

    \begin{enumerate}
        \item[(i)] If \(E'(\QQ(2^{\infty}))[5] = \{\OO\}\), then by Lemma \ref{cyclic_isogeny_when_cn+cnm} and Theorem \ref{twist_torsion}, \(E'\) has a 25-isogeny. Let \(P_{25} \in E(\QQ(2^\infty))\) be a point of order 25 and \(\phi(P_{25})\) its image in \(E'(L)\). We know that \(\left< \phi(P_{25}) \right>\) is \(\gal(L/\QQ)\)-invariant. By Lemma \ref{degree of definition isogeny}, the extension degree \([\QQ(\phi(P_{25})):\QQ]\) must divide \(\varphi(25) = 20\). Additionally, since \(\QQ(\phi(P_{25}))/\QQ\) is a Galois extension, it follows that \[[\QQ(\phi(P_{25})): \QQ(\phi(P_{25})) \cap \QQ(2^\infty)] = [L:\QQ(2^\infty)] = 2,\] hence \([\QQ(\phi(P_{25})):\QQ]\) is a power of 2. This implies that \([\QQ(\phi(P_{25})):\QQ]\)  must divide 4, which is impossible, as \cite[Corollary 8.7]{GonzalezJimenez2016GrowthOT} states that \(\Phi_{\QQ}(4)\) contains no group of order divisible by 25.
        
        \item[(ii)] If \(E'(\QQ(2^{\infty}))[5] = \left<P_5\right> \cong \ZZ/5\ZZ\) for a point \(P_5\) of order 5, then we claim that \([\QQ(P_5):\QQ] = 1\) or \(2\). Indeed, since \(\QQ(2^{\infty})/\QQ\) is an abelian extension, \(\left<P_5\right>\) is \(\gal(\QQ(2^\infty)/\QQ)\)-invariant and Lemma \ref{degree of definition isogeny} implies that \([\QQ(P_5):\QQ]\) divides 4. Additionally, if it is equal to 4, then \(\gal(\QQ(P_5)/\QQ)\) is isomorphic to \((\ZZ/5\ZZ)^{\times}\) where we get a contradiction, since \(\QQ(P_5)\) is contained in \(\QQ(2^{\infty})\) but \((\ZZ/5\ZZ)^{\times}\) is a cyclic group of order 4. 

        If \([\QQ(P_5):\QQ] = 2\), then \(\QQ(P_5) = \QQ(\sqrt{n})\) for some \(n \in \QQ\). Consider the quadratic twist \(E'^n\) of \(E'\) by \(n\). \(E'^n\) is also a quadratic twist of \(E\) defined over \(\QQ\) and, by Theorem \ref{twist_torsion}, \(E'^n(\QQ)[5] \cong \ZZ/5\ZZ\). Hence, we assume, without loss of generality, that \(P_5 \in E'(\QQ)[5]\). 
    
    Let \(P_{25} \in E(\QQ(2^{\infty}))[25]\) be a point of order 25 and \(\phi(P_{25})\) its image in \(E'(L)\). Define \(K\) as the smallest subfield of \(\QQ(2^\infty)\) containing \(\QQ\) and the coordinates of \(\phi(P_{25})\), and let \(\widehat{K}\) be its Galois closure over \(\QQ\). Since \(\widehat{K} \subseteq L\), it follows that  \(E'(\widehat{K})[25] = \left<\phi(P_{25}), P_5\right> \cong (\ZZ/25\ZZ) \oplus (\ZZ/5\ZZ)\). Moreover, since \(\widehat{K}/\QQ\) is a finite Galois extension of extension degree coprime to 5, we apply Lemma \ref{degree of definition isogeny + rational torsion} to conclude that \([\QQ(\phi(P_{25})):\QQ]\) divides 4. However, this contradicts the classification of \(\Phi_{\QQ}(4)\) in \cite{CHOU, GonzalezJimenez2016GrowthOT}.
    \end{enumerate}
    \end{proof}

\subsection{The 7-primary part of \(E(\QQ(2^\infty))_{\tors}\)}\label{The case p = 7}\hfill\\

    In this subsection, we examine elliptic curves with points of order 7. By Theorem \ref{exact_degree}, the extension degree of the field of definition of a point of order 7 on an elliptic curve defined over \(\QQ\) is not divisible by 4. This property allows us to establish the following proposition.

    \begin{proposition}\label{lemma5.2}
        If \(E(\QQ(2^{\infty}))\) has a point of order \(7\), then \(L = \QQ(2^{\infty})\). Therefore \(E(\QQ(2^{\infty}))_{\tors}\) is isomorphic to one of the groups in Theorem \(\ref{fujita_list}\).
    \end{proposition}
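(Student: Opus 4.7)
The plan is to argue by contradiction: I suppose $L \neq \QQ(2^\infty)$, and force the field of definition $\QQ(\phi(P_7))$ of $\phi(P_7)$ over $\QQ$ to have degree simultaneously a power of $2$ and an element of the list in Theorem \ref{exact_degree}. This will drive the degree down to $1$ or $2$, placing $\phi(P_7)$ in $E'(\QQ(2^\infty))$ and contradicting Lemma \ref{remark about field of definition when twisting}.

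Concretely, I would fix a point $P_7 \in E(\QQ(2^\infty))$ of order $7$ and consider its image $\phi(P_7) \in E'(L)$. Since $L \neq \QQ(2^\infty)$ and $7 \neq 2$, Lemma \ref{remark about field of definition when twisting} yields $\phi(P_7) \notin E'(\QQ(2^\infty))$, and in particular $\QQ(\phi(P_7)) \not\subseteq \QQ(2^\infty)$. On the other hand, $\QQ(\phi(P_7))$ is a number field contained in $L = \QQ(2^\infty)(\sqrt{D})$, so I can choose a finite subextension $M \subseteq \QQ(2^\infty)$ of $\QQ$ that contains $D$ together with the $\QQ(2^\infty)$-coefficients of the coordinates of $\phi(P_7)$ in the basis $\{1,\sqrt{D}\}$ of $L/\QQ(2^\infty)$; this gives $\QQ(\phi(P_7)) \subseteq M(\sqrt{D})$. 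Since every finite subextension of $\QQ(2^\infty)/\QQ$ is contained in a compositum of finitely many quadratic extensions of $\QQ$, the degree $[M:\QQ]$, and hence $[M(\sqrt{D}):\QQ]$, is a power of $2$; by multiplicativity of degrees, $[\QQ(\phi(P_7)):\QQ]$ is also a power of $2$.

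Now $E'$ is defined over $\QQ$ and $\phi(P_7)$ has order $7$, so Theorem \ref{exact_degree} confines $[\QQ(\phi(P_7)):\QQ]$ to the set $\{1,2,3,6,7,9,12,14,18,21,24,36,42,48\}$, whose only powers of $2$ are $1$ and $2$. In either case $\QQ(\phi(P_7))$ is either $\QQ$ itself or a quadratic extension of $\QQ$, both of which lie inside $\QQ(2^\infty)$; this contradicts $\QQ(\phi(P_7)) \not\subseteq \QQ(2^\infty)$. Thus $L = \QQ(2^\infty)$, $E$ and $E'$ become isomorphic over $\QQ(2^\infty)$, and Theorem \ref{fujita_list} applied to $E'/\QQ$ yields the asserted list. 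The argument is noticeably shorter than in the cases $p=2,3,5$: the only nontrivial step is the observation that $[\QQ(\phi(P_7)):\QQ]$ is a $2$-power, and no analysis of the Galois closure $\widehat{L}$, no cyclic-isogeny input (Lemma \ref{cyclic_isogeny_when_cn+cnm}), and no appeal to the classification over $\QQ^{\ab}$ or $\QQ(\D_4^\infty)$ is required.
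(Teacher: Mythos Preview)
Your proof is correct and follows essentially the same route as the paper's: both arguments reduce to the observation that $[\QQ(\phi(P_7)):\QQ]$ must be a power of $2$ (since $\QQ(\phi(P_7))$ sits inside a finite $2$-power-degree extension of $\QQ$), and then intersect with the list from Theorem~\ref{exact_degree} to force the degree to be $1$ or $2$, whence $\phi(P_7)\in E'(\QQ(2^\infty))$ and Lemma~\ref{remark about field of definition when twisting} finishes. The only differences are cosmetic: you frame it as a contradiction and spell out explicitly (via the auxiliary field $M$) why the degree is a $2$-power, whereas the paper compresses this into the single clause ``since $[L:\QQ(2^\infty)]\le 2$''.
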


    \begin{proof}
        Let \(P_7 \in E(\QQ(2^{\infty}))\) be a point of order \(7\), and \(\phi(P_7)\) its image in \(E'(L)\). Since \([L:\QQ(2^{\infty})] \leq 2\), Theorem \ref{exact_degree} implies that \([\QQ(\phi(P_7)):\QQ] = 1\) or \(2\). Hence, \(\phi(P_7) \in E'(\QQ(2^{\infty}))\) and, by Lemma \ref{remark about field of definition when twisting}, \(L = \QQ(2^{\infty})\).
    \end{proof}

\subsection{The 13-primary part of \(E(\QQ(2^\infty))_{\tors}\)}\label{The case p = 13}\hfill\\

    In this subsection, we examine elliptic curves with points of order 13.  

 \begin{proposition}\label{13 L abelian}
        If \(E(\QQ(2^{\infty}))\) has a point of order \(13\), then \(L\) is an abelian extension of \(\QQ\), and \(E(\QQ(2^{\infty}))_{\tors}\) is isomorphic to \(\ZZ/13\ZZ\).
    \end{proposition}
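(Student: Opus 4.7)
The plan is to first show that $L/\QQ$ is an abelian extension so that $L \subseteq \QQ^{\ab}$, and then to invoke Chou's classification (Theorem \ref{chou_list_maximalabelian}) to pin down the full torsion structure.

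Fix a point $P_{13} \in E(\QQ(2^\infty))$ of order $13$ and consider its image $\phi(P_{13}) \in E'(L)$. By Remark \ref{remark of E[p] list}, we already know that $E'(\QQ(2^\infty))[13] = \{\OO\}$ and that $L/\QQ$ is Galois. The first task is to check that $E'(L)[13] = \langle \phi(P_{13})\rangle \cong \ZZ/13\ZZ$. If instead $E'[13] \subseteq E'(L)$, then by Lemma \ref{cn+cn_contained}, $\QQ(\zeta_{13}) \subseteq L$. But the unique quadratic subfield of $\QQ(\zeta_{13})$ is $\QQ(\sqrt{13}) \subseteq \QQ(2^\infty)$, while the full field $\QQ(\zeta_{13})$ is not, so $[\QQ(\zeta_{13}) \cdot \QQ(2^\infty) : \QQ(2^\infty)] = 6$, which cannot fit inside the at most quadratic extension $L/\QQ(2^\infty)$ --- contradiction. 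Hence $E'(L)[13] \cong \ZZ/13\ZZ$, and by Lemma \ref{remark about field of definition when twisting}, $L = \QQ(2^\infty)(\phi(P_{13}))$.

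Since $\langle\phi(P_{13})\rangle = E'(L)[13]$ is automatically $\gal(L/\QQ)$-invariant, Lemma \ref{degree of definition isogeny} yields an injective homomorphism $\gal(\QQ(\phi(P_{13}))/\QQ) \hookrightarrow (\ZZ/13\ZZ)^\times \cong \ZZ/12\ZZ$. Hence $\QQ(\phi(P_{13}))/\QQ$ is cyclic, and in particular abelian. As $L = \QQ(2^\infty) \cdot \QQ(\phi(P_{13}))$ is a compositum of two abelian extensions of $\QQ$, the extension $L/\QQ$ is itself abelian, i.e., $L \subseteq \QQ^{\ab}$.

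Finally, by Theorem \ref{chou_list_maximalabelian}, $E'(\QQ^{\ab})_{\tors}$ lies in an explicit list, and inspection shows that $\ZZ/13\ZZ$ is the only group in that list whose order is divisible by $13$. Therefore $E'(L)_{\tors} \subseteq E'(\QQ^{\ab})_{\tors} \cong \ZZ/13\ZZ$, and the containment $\langle \phi(P_{13})\rangle \subseteq E'(L)_{\tors}$ forces equality. Since $\phi$ is a group isomorphism over $L$, we obtain $E(\QQ(2^\infty))_{\tors} \subseteq E(L)_{\tors} \cong E'(L)_{\tors} \cong \ZZ/13\ZZ$, and since $P_{13}$ lies in $E(\QQ(2^\infty))$, equality holds. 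The main difficulty is the first step of ruling out $E'(L)[13] = E'[13]$; once the cyclicity of $E'(L)[13]$ is in hand, the rest reduces to straightforward applications of Lemma \ref{degree of definition isogeny} and Theorem \ref{chou_list_maximalabelian}.
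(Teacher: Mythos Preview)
Your proof is correct and follows the same overall strategy as the paper: show that $\QQ(\phi(P_{13}))/\QQ$ is abelian, deduce $L \subseteq \QQ^{\ab}$, and finish via Theorem~\ref{chou_list_maximalabelian}. There is one small but genuine difference in how abelianness of $\QQ(\phi(P_{13}))/\QQ$ is obtained. You observe directly that $\gal(\QQ(\phi(P_{13}))/\QQ)$ embeds into the \emph{cyclic} group $(\ZZ/13\ZZ)^\times$, which immediately gives cyclicity. The paper instead uses the explicit form of $\phi$ to note that the $x$-coordinate of $\phi(P_{13})$ lies in $\QQ(2^\infty)$, so $[\QQ(\phi(P_{13})):\QQ]$ is a power of $2$; combined with the divisibility by $12$ from Lemma~\ref{degree of definition isogeny}, this forces $[\QQ(\phi(P_{13})):\QQ]\le 4$, hence abelian. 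Your route is slightly cleaner since it avoids appealing to the coordinate description of $\phi$. You are also more explicit than the paper in verifying $E'(L)[13]\cong\ZZ/13\ZZ$ (the paper leans on Remark~\ref{remark of E[p] list}, which strictly speaking was argued over a finite subfield), so your write-up is a bit more self-contained.
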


    \begin{proof}
        We already know, by Remark \ref{remark of E[p] list}, that if \(E(\QQ(2^{\infty}))\) has a point of order 13, then \(E'(\QQ(2^{\infty}))[13] = \{\OO\}\), \(L/\QQ\) is a Galois extension and \(E'\) has a 13-isogeny. 
        Let \(P_{13} \in E(\QQ(2^{\infty}))\) be a point of order 13 and \(\phi(P_{13}) = (x,y)\) its image in \(E'(L)\). Remark \ref{remark of E[p] list} implies that \(\left<\phi(P_{13})\right>\) is \(\gal(L/\QQ)\)-invariant, and, by Lemma \ref{degree of definition isogeny}, we observe that \([\QQ(\phi(P_{13})):\QQ]\) divides 12. Since, by the definition of \(\phi\), \(x \in \QQ(2^{\infty})\) and, by the defining equation of an elliptic curve, the degree of the extension \(\QQ(x,y)/\QQ(x)\) is at most 2, we observe that \(\QQ(\phi(P_{13}))/\QQ\) is a Galois extension of degree at most 4 and, in particular, an abelian extension.

        Finally, since \(L = \QQ(2^{\infty})(\phi(P_{13})) \subseteq \QQ^{\ab}\), \(E(\QQ(2^{\infty}))_{\tors}\) is a subgroup of \(E'(\QQ^{\ab})_{\tors}\), which is one of the groups in Theorem \ref{chou_list_maximalabelian}, where we observe that the only possibility is it to be isomorphic to \(\ZZ/13\ZZ\).
    \end{proof}

\subsection{Compatibility of \(p\)-Primary Torsion Subgroups in \(E(\QQ(2^{\infty}))_{\text{tors}}\)}\label{Miscellaneous cases}\hfill\\

    Thus far, we have examined the \(p\)-primary torsion subgroups \(E(\QQ(2^{\infty}))[p^{\infty}]\) for each prime \(p\). In this subsection, we investigate which combinations of \(p\)-primary torsion subgroups can occur simultaneously. Specifically, for each prime \(p\), let \(G_p\) denote one of the groups that can appear as a \(p\)-primary torsion subgroup for some elliptic curve. Our goal is to determine whether there exists an elliptic curve \(E/\QQ(2^{\infty})\) such that the torsion subgroup \(E(\QQ(2^{\infty}))_{\text{tors}}\) is isomorphic to the direct product \(\prod_p G_p\). Fortunately, using the stronger lemmas established in Subsections \ref{The case p = 2} through \ref{The case p = 13}, along with Theorems \ref{fujita_list}, \ref{Daniels_list}, and \ref{chou_list_maximalabelian}, it suffices to focus on the case where \(E(\QQ(2^{\infty}))\) has points of order 24.

    \begin{example}
    Let us show, for instance, that \(E(\QQ(2^{\infty}))_{\tors}\) does not contain a point of order 30. Assume, for contradiction, that \(E(\QQ(2^{\infty}))_{\tors}\) contains a point \(P_{30}\) of order 30. Then, \([6]P_{30}\) is a point of order 5. By Proposition \ref{5 L abelian}, this implies that \(L/\QQ\) is an abelian extension. Consequently, \(E(\QQ(2^{\infty}))_{\tors}\) is a subgroup of \(E'(\QQ^{\ab})_{\tors}\).
    However, by Theorem \ref{chou_list_maximalabelian}, the torsion subgroup \(E'(\QQ^{\mathrm{ab}})_{\tors}\) does not contain a point of order 30. 
\end{example}

    \begin{proposition}\label{no point of order 24}
         \(E(\QQ(2^{\infty}))\) has a no point of order \(24\).
    \end{proposition}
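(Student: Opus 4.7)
Suppose for contradiction that $P_{24} \in E(\QQ(2^\infty))$ has order $24$; set $R_8 = [3]P_{24}$ of order $8$ and $Q_3 = [8]P_{24}$ of order $3$. Since $[12]P_{24}$ has order $2$, Lemma \ref{lemma: full 2-torsion} gives $E'[2] \subseteq E'(\QQ(2^\infty))$. If moreover $E'[2] \subseteq E'(\QQ)$, then Lemma \ref{all 2-tor rational} applied to $R_8$ yields $L = \QQ(2^\infty)$, whence $E(\QQ(2^\infty))_{\tors} \cong E'(\QQ(2^\infty))_{\tors}$ contains a point of order $24$, contradicting Theorem \ref{fujita_list}. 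So assume $E'(\QQ)[2] \cong \ZZ/2\ZZ$; the two remaining $2$-torsion points of $E'$ lie in a quadratic extension of $\QQ$ contained in $\QQ(2^\infty)$, so $E'[2] \subseteq E'(\QQ(2^\infty))$ still holds, and Lemma \ref{c2 + c4 L galois} yields $\widehat L \subseteq \QQ(\D_4^\infty)$.

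By Theorem \ref{Daniels_list}, $E'(\widehat L)_{\tors}$ is a subgroup of $\ZZ/4\ZZ \oplus \ZZ/24\ZZ$, $\ZZ/8\ZZ \oplus \ZZ/24\ZZ$, or $\ZZ/12\ZZ \oplus \ZZ/24\ZZ$. Since $E'$ admits no $24$-isogeny (Theorem \ref{isogeny_list}), the subgroups $\langle\phi(Q_3)\rangle$ and $\langle\phi(R_8)\rangle$ cannot both be $\gal(\widehat L/\QQ)$-stable, for otherwise Lemma \ref{cyclic_isogeny_when_cn+cnm} would give both a $3$-isogeny and an $8$-isogeny of $E'$. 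Combining this observation with Lemma \ref{degree of definition isogeny} and Theorem \ref{twist_torsion} for $n = 3$ forces $E'(L)[3] \cong \ZZ/3\ZZ \oplus \ZZ/3\ZZ$ and $E'(\QQ(2^\infty))[3] \cong \ZZ/3\ZZ$; hence $E'(\widehat L)_{\tors}$ contains $\ZZ/3\ZZ \oplus \ZZ/24\ZZ$, and Daniels's list pins down $E'(\widehat L)_{\tors} = \ZZ/12\ZZ \oplus \ZZ/24\ZZ$.

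It now suffices to show that $L \subseteq \QQ^{\ab}$: then $E(\QQ(2^\infty))_{\tors}$ embeds into $E'(\QQ^{\ab})_{\tors}$, whose classification by Theorem \ref{chou_list_maximalabelian} contains no point of order $24$, yielding the desired contradiction. If $L/\QQ$ is not Galois, Lemma \ref{L hat is of order 16} gives $[\widehat L : \QQ(2^\infty)] = 4$; the action of $\gal(\widehat L/\QQ(2^\infty))$ on $E'[3] = E'(\widehat L)[3]$ combined with the torsion structure $\ZZ/12\ZZ \oplus \ZZ/24\ZZ$ should produce a quartic cyclic intermediate field of $\widehat L/\QQ$, contradicting Lemma \ref{group lemma galois theory}. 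If $L/\QQ$ is Galois, $\gal(L/\QQ)$ is a central extension of the elementary abelian $2$-group $\gal(\QQ(2^\infty)/\QQ)$ by $\gal(L/\QQ(2^\infty)) \cong \ZZ/2\ZZ$, and using $\sigma(\phi(P_{24})) = -\phi(P_{24})$ together with the explicit torsion structure, I expect to show that all commutators in $\gal(L/\QQ)$ vanish, so $L/\QQ$ is abelian.

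The main obstacle is this last step: in the non-Galois subcase, producing the quartic cyclic subfield requires detailed bookkeeping of how $\gal(\widehat L/\QQ(2^\infty))$ acts on the $3$- and $24$-torsion of $E'$; in the Galois subcase, the commutator vanishing must be verified directly from the Daniels structure $\ZZ/12\ZZ \oplus \ZZ/24\ZZ$ and the explicit $\sigma$-action on $\phi(P_{24})$.
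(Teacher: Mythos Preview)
Your proposal has genuine gaps, and you essentially acknowledge this yourself when you write ``I expect to show'' and describe the final step as an ``obstacle.''

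\textbf{The intermediate reduction is not quite right.} Your claim that the non-existence of a $24$-isogeny, together with Lemma~\ref{degree of definition isogeny} and Theorem~\ref{twist_torsion}, forces $E'(L)[3]\cong(\ZZ/3\ZZ)^2$ and $E'(\QQ(2^\infty))[3]\cong\ZZ/3\ZZ$ fails in the sub-case where $E'(\QQ(2^\infty))[3]=\{\OO\}$, $E(\QQ(2^\infty))[3]\cong\ZZ/3\ZZ$, and $L/\QQ$ is not Galois: there Theorem~\ref{twist_torsion} gives only $E'(L)[3]\cong\ZZ/3\ZZ$. One can still salvage the conclusion $E'[3]\subseteq E'(\widehat L)$ via Lemma~\ref{L hat is of order 16}, and hence narrow $E'(\QQ(\D_4^\infty))_{\tors}$ to $(\ZZ/12\ZZ)\oplus(\ZZ/24\ZZ)$, but your written chain of implications is incorrect as stated. (You also conflate $E'(\widehat L)_{\tors}$ with $E'(\QQ(\D_4^\infty))_{\tors}$; Daniels's list classifies the latter.)

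\textbf{The final step is the real problem.} In the non-Galois sub-case you propose that the action of $\gal(\widehat L/\QQ(2^\infty))$ on $E'[3]$ should manufacture a cyclic quartic subfield of $\widehat L$, contradicting Lemma~\ref{group lemma galois theory}. But the Weil pairing on $E'[3]$ only produces $\QQ(\zeta_3)\subseteq\QQ(2^\infty)$, and there is no general reason the image of Galois in $\GL_2(\ZZ/3\ZZ)$ yields a cyclic quotient of order~$4$; this depends on the specific curve. In the Galois sub-case your sketch (``commutators vanish from the $\sigma$-action on $\phi(P_{24})$'') is not an argument.

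\textbf{How the paper closes the gap.} The paper does not attempt a general Galois-theoretic argument here. Instead it invokes the structural results of \cite[Section~7]{Daniels}: in case $(\ZZ/4\ZZ)\oplus(\ZZ/24\ZZ)$ the curve $E'$ already has a rational $3$-isogeny, forcing $L=\QQ(2^\infty)$; in case $(\ZZ/8\ZZ)\oplus(\ZZ/24\ZZ)$ one has $E'[2]\subseteq E'(\QQ)$, which you handled; and in case $(\ZZ/12\ZZ)\oplus(\ZZ/24\ZZ)$ Daniels shows $j(E')=8000$, so $E'$ may be taken to be the explicit curve \texttt{256a1}. For that single curve one checks directly that the field of definition of any $(\ZZ/2\ZZ)\oplus(\ZZ/4\ZZ)$ subgroup contains a cyclic quartic extension of $\QQ$; since such a subgroup lies in $E'(L)$, Lemma~\ref{group theory lemma 1} then yields $L\subseteq\QQ^{\ab}$, and Theorem~\ref{chou_list_maximalabelian} finishes. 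The input $j(E')=8000$ is what makes the argument work --- without it, you are trying to prove a statement about all curves in the third case, and your proposed tools are not strong enough.
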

    
    \begin{proof}
        Suppose \(E(\QQ(2^{\infty}))\) has a point of order 24. By Lemma \ref{c2 + c4 L galois}, \(L\) is contained in \(\QQ(\D_4^{\infty})\), and, by Theorem \ref{Daniels_list}, we have three possibilities:
        \begin{enumerate}
            \item[(i)] If \(E'(\QQ(\D_4^{\infty}))_{\tors} \cong (\ZZ/4\ZZ)\oplus(\ZZ/24\ZZ)\). By  \cite[Section 7]{Daniels}, \(E'\) has a 3-isogeny, which implies, by Lemmas \ref{degree of definition isogeny} and \ref{remark about field of definition when twisting}, that \(L = \QQ(2^{\infty})\). By Theorem \ref{fujita_list}, \(E(\QQ(2^{\infty}))_{\tors}\) has no point of order 24.

            \item[(ii)] If \(E'(\QQ(\D_4^{\infty}))_{\tors} \cong (\ZZ/8\ZZ)\oplus(\ZZ/24\ZZ)\). By \cite[Section 7]{Daniels}, \(E'\) has its full 2-torsion defined over \(\QQ\). From Theorem \ref{propknapp}, we observe that \(E'\) has its full 4-torsion defined over \(\QQ(2^{\infty})\). Hence, by Lemma \ref{remark about field of definition when twisting}, we conclude that \(L = \QQ(2^{\infty})\). Again, by Theorem \ref{fujita_list}, \(E(\QQ(2^{\infty}))_{\tors}\) has no point of order 24.

            \item[(iii)] If \(E'(\QQ(\D_4^{\infty}))_{\tors} \cong (\ZZ/12\ZZ)\oplus(\ZZ/24\ZZ)\). By \cite[Section 7]{Daniels}, \(j(E') = 8000\). Therefore, we can take the elliptic curve \href{https://www.lmfdb.org/EllipticCurve/Q/256a1/}{\texttt{256a1}} (using the LMFDB notation, see \cite{LMFDB}) as \(E'\), and, observing that the field of definition of a subgroup isomorphic to \((\ZZ/2\ZZ)\oplus(\ZZ/4\ZZ)\) contains a quartic cyclic extension of \(\QQ\), we get, by Lemma \ref{group theory lemma 1}, that \(L/\QQ\) is an abelian extension and, in particular, is contained in \(\QQ^{\ab}\). By Theorem \ref{chou_list_maximalabelian}, \(E'(\QQ^{\ab})\) has no point of order 24.
        \end{enumerate}
    \end{proof}

\end{document}